\documentclass[11pt,letterpaper]{amsart}
\usepackage[T1]{fontenc}
\usepackage[utf8]{inputenc}
\usepackage{lmodern}
\usepackage{microtype}
\usepackage{amsmath,amscd,amssymb,amsthm,mathtools}
\usepackage{mathrsfs}
\usepackage{geometry}
\usepackage{enumitem}
\usepackage{xcolor}
\usepackage{hyperref}
\usepackage{bookmark}
\allowdisplaybreaks[2]
\numberwithin{equation}{section}
\hypersetup{
  colorlinks=true,
  linkcolor=blue!45!black,
  citecolor=blue!45!black,
  urlcolor=blue!45!black,
  pdftitle={Smooth Limits of Stable Manifolds},
  pdfsubject={Smooth Limits of Stable Manifolds}
}

\newtheorem{theorem}{Theorem}[section]
\newtheorem{lemma}[theorem]{Lemma}
\newtheorem{proposition}[theorem]{Proposition}
\newtheorem{corollary}[theorem]{Corollary}
\theoremstyle{definition}
\newtheorem{definition}[theorem]{Definition}

\theoremstyle{remark}
\newtheorem{remark}[theorem]{Remark}

\newtheorem{quest}[theorem]{Question}

\newcommand{\D}{\Delta}

\newcommand{\Q}{\mathbb Q}
\newcommand{\C}{\mathbb C}
\newcommand{\R}{\mathbb R}
\newcommand{\PP}{\mathbb P}
\newcommand{\OO}{\mathcal O}
\newcommand{\IC}{\operatorname{IC}}
\newcommand{\IH}{\operatorname{IH}}
\newcommand{\Ib}{\operatorname{Ib}}
\newcommand{\Bs}{\operatorname{Bs}}
\newcommand{\Proj}{\operatorname{Proj}}
\newcommand{\Exc}{\operatorname{Exc}}
\newcommand{\Supp}{\operatorname{Supp}}

\newcommand{\Aut}{\operatorname{Aut}}
\newcommand{\ord}{\operatorname{ord}}
\newcommand{\rat}{\operatorname{rat}}
\newcommand{\codim}{\operatorname{codim}}
\newcommand{\Gr}{\operatorname{Gr}}
\newcommand{\MHM}{\operatorname{MHM}}

\newcommand{\Perv}{\operatorname{Perv}}
\DeclareMathOperator{\Hilb}{Hilb}
\DeclareMathOperator{\Tot}{Tot}
\DeclareMathOperator{\Bl}{Bl}
\DeclareMathOperator{\Spec}{Spec}
\DeclareMathOperator{\pr}{pr}

\DeclareMathOperator{\Ric}{Ric}

\DeclareMathOperator{\Douady}{Douady}
\title{\bfseries Smooth Limits of Stable Manifolds}

\author{Mu-lin Li}
\address{School of Mathematics, Hunan University, China}
\email{mulin@hnu.edu.cn}

\thanks{\textbf{Keywords}: K\"ahler--Einstein metrics; smooth limits;
Moishezon manifolds; canonical models; intersection cohomology;
rational curves; elementary transformations.}
\thanks{\textbf{2020 Mathematics Subject Classification}:
Primary 32G05; Secondary 32J18, 14E30, 14F43, 53C55.}

\date{}
\begin{document}

\begin{abstract}
 Let $X$ be a compact complex manifold of dimension $n$, and suppose that $X$ is deformation equivalent to a stable manifold. Then $X$ is projective whenever $n\le3$. We also construct, for every $n\ge4$, smooth families of compact complex manifolds over the unit disc such that every fiber over the punctured disc is a stable manifold,  while the central fiber is a non-K\"ahler Moishezon manifold.
\end{abstract}

\maketitle

\section{Introduction}\label{sec:introduction}

Two compact complex manifolds $X$ and $X'$ are \emph{deformation equivalent} if there is a smooth proper morphism $f\colon\mathcal{X}\to B$ between connected complex manifolds together with points $b,b'\in B$ such that $X_b\cong X$ and $X_{b'}\cong X'$.

% Equivalently, if and only if there is a sequence of compact complex manifolds $\{X_i\}_{i=0}^k$ such that $X_0=X$, $X_k=X'$, and  a series of smooth proper morphisms
%\begin{equation}\nonumber
%\pi_i:\mathcal{X}_i\to\Delta,
%\end{equation}
%where $\Delta=\{t\in\C||t|<1\}\subset \C$ is the unit disc, and $X_{i-1}$, respectively $X_{i}$ are isomorphic to fibers of $\pi_i$.

Yau proposed the following question, concerning a compact complex manifold that is deformation equivalent to a K\"ahler--Einstein manifold of nonpositive scalar curvature.

\begin{quest}\cite[Chapter~IX, \S~IV, Problem~55]{SY94}\label{eq:question}
If a compact complex manifold $X$ can be deformed to a K\"ahler--Einstein manifold $X'$ of nonpositive scalar curvature, is $X$ K\"ahler?
\end{quest}

The question concerns the persistence of the K\"ahler property under deformation, not the persistence of an Einstein metric. Hironaka's example
\cite{Hironaka} shows that a smooth limit of projective manifolds can
be non-K\"ahler, so the Einstein condition imposes a substantive
additional constraint. For a K\"ahler form $\omega$ on $X'$ we adopt the convention
\[
 \Ric(\omega)=-\sqrt{-1}\,\partial\bar\partial\log\det(g_{j\bar k}),
 \qquad [\Ric(\omega)]=2\pi c_1(TX').
\]
A negative K\"ahler--Einstein metric can be rescaled so that
$\Ric(\omega)=-\omega$. This identity makes the canonical bundle
positive, hence ample by the Kodaira embedding theorem \cite[Section~1.2.C]{Lazarsfeld2004}. Conversely,
the existence theorems of Aubin \cite{Aubin1978} and Yau
\cite[Section~6, Theorem~5]{Yau1978} produce such a metric whenever the
canonical bundle is ample. The metric hypothesis is therefore
equivalent to canonical polarization. By a \emph{stable
manifold} we mean a smooth canonically polarized manifold.
Since ampleness of the canonical line bundle is a Zariski-open condition, there is a proper analytic subset $S\subset B$ such that the canonical line bundles of the fibers over $B\setminus S$ are ample. Choose a small disc $\Delta_b\subset B$ centered at $b$ with $S\cap(\Delta_b\setminus\{b\})=\emptyset$. The negative scalar curvature case of Question~\ref{eq:question} then reduces to the following problem.

Let
$\pi\colon\mathcal X\to\Delta$ be a proper holomorphic submersion
with connected compact fibers $X_t$ over the unit disc, and suppose that every fiber over the punctured disc $\Delta^*:=\Delta\setminus\{0\}$ admits a K\"ahler--Einstein metric of negative scalar curvature. Must $X_0$ be K\"ahler?

In this paper we give a complete,
dimension-sharp answer to the \emph{negative K\"ahler--Einstein
smooth-limit case of Yau's question}: the answer is affirmative in
complex dimension at most three and negative in every complex
dimension at least four.

 Our principal
positive result is the following.

\begin{theorem}[cf. Theorem \ref{thm:main}]\label{thm:main-ke}
Let $\pi\colon\mathcal X\to\Delta$ be a proper holomorphic submersion
with connected compact fibers of complex dimension $1\le n\le3$. Suppose
that for every $t\in\Delta^*$, the fiber $X_t$ is a stable manifold.
Then $X_0$ is a smooth projective manifold, hence a smooth K\"ahler
manifold. Moreover, $K_{X_0}$ is big and semiample.
\end{theorem}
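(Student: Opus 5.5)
We first show that $X_0$ is Moishezon, then pass to the canonical model, and finally use the low dimension to rule out non-projectivity. Since $K_{X_t}$ is ample for $t\neq0$, the plurigenera $h^0(X_t,mK_{X_t})$ are given for $m\ge2$ by Riemann--Roch and Kodaira vanishing, hence are constant on $\Delta^*$. Upper semicontinuity gives $h^0(X_0,mK_{X_0})\ge h^0(X_t,mK_{X_t})\sim \frac{K_{X_t}^n}{n!}m^n$. The intersection number $K_{X_t}^n=\int c_1(K)^n$ is topological and positive, so $K_{X_0}$ is big and $X_0$ is Moishezon. For $n=1$ this already gives projectivity. For $n=2$, a Moishezon surface is projective, since it has $b_1$ even, is K\"ahler, and has algebraic dimension $2$. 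So the real work is the case $n=3$.

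For $n=3$ we use the relative canonical ring. Take the direct images $\pi_*\mathcal O(mK_{\mathcal X/\Delta})$. By Siu's invariance of plurigenera, extended to the Moishezon central fiber (or by a direct argument, since the fibers over $\Delta^*$ are of general type), these are locally free with fibers surjecting onto $H^0(X_0,mK_{X_0})$. Hence the relative canonical model $\mathcal Y=\Proj_\Delta\bigoplus_m\pi_*\mathcal O(mK_{\mathcal X/\Delta})$ is a flat projective family over $\Delta$. Its general fiber is $Y_t=X_t$, and its central fiber $Y_0$ is the canonical model of $X_0$, a projective variety with canonical singularities and ample $K_{Y_0}$. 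The bimeromorphic map $\phi\colon X_0\dashrightarrow Y_0$ is then a morphism, because it is the limit of the isomorphisms $X_t\cong Y_t$. Concretely, consider the closure of the graph in $\mathcal X\times_\Delta\mathcal Y$. Near $t=0$ the pluricanonical maps $\Phi_{|mK_{\mathcal X/\Delta}|}$ are holomorphic on all of $\mathcal X$ provided $|mK_{X_0}|$ is base point free. Establishing this base-point-freeness, i.e.\ the semiampleness of $K_{X_0}$, is what we need.

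Semiampleness of $K_{X_0}$ is the main obstacle. We first verify that $K_{X_0}$ is nef. Suppose a curve $C\subset X_0$ has $K_{X_0}\cdot C<0$. Bend-and-break, via Mori's deformation theory, which only needs Moishezon rather than projective, yields a rational curve $R$ with $-K\cdot R\le n+1$. Its deformation space has dimension at least $-K\cdot R+n-3\ge 1$ in $\mathcal X$ relative to $\Delta$. Since $\dim\mathcal X=n+1$, the rational curve deforms to nearby fibers $X_t$. This is impossible, because $X_t$ has ample $K$ and so contains no $K$-negative rational curves. Hence $K_{X_0}$ is nef and big.

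For semiampleness in dimension three we would like the basepoint-free theorem. It is available for Moishezon threefolds with $\Q$-factorial terminal singularities via Nakayama's analytic MMP, or can be reduced to a projective resolution: take $\mu\colon \tilde X\to X_0$ projective, pull back $K$, and apply Kawamata's basepoint-free theorem to $\mu^*K_{X_0}$, using that $K_{X_0}=\mu_*K_{\tilde X}$ stays nef and big. Here one must check that descending base-point-freeness from $\tilde X$ is legitimate.

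Once $K_{X_0}$ is semiample, $\phi\colon X_0\to Y_0$ is a birational morphism onto a projective variety. Its exceptional locus consists of $K$-trivial curves, which in dimension three are covered by rational curves. The remaining task is to show that $\phi$ is projective, i.e.\ that some $\phi$-ample divisor exists. We would argue that any $\phi$-exceptional curve class that is numerically trivial in $H_2(X_0)$ would obstruct projectivity. In dimension at most three, however, exceptional loci of crepant contractions from smooth threefolds are divisorial: there are no small crepant contractions from smooth threefolds with nontrivial flop-type curve configurations lacking exceptional divisors, by the classification of such curves as $(-1,-1)$, $(0,-2)$, $(1,-3)$ curves. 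Exceptional divisors then supply a $\phi$-ample $-E$. This is the step we expect to be delicate, and it is presumably where the dimension bound $n\le3$ enters. Given a relatively ample divisor, $X_0$ is projective over the projective $Y_0$, hence projective, and $K_{X_0}=\phi^*K_{Y_0}$ is big and semiample.
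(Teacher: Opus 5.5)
Your bigness argument and projectivity for $n\le2$ are fine. For surfaces, nefness does follow from your deformation idea applied to $(-1)$-curves: such a curve is rigid in $X_0$, but $h^0(N_{C/\mathcal X})=1$ and $h^1(N_{C/\mathcal X})=0$, so it must move into $X_t$. In dimension three, however, both steps that carry the weight have genuine gaps.

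\textbf{Nefness.} The argument does not work as stated. If $R\subset X_0$ has $K_{X_0}\cdot R<0$, every member of a deformation family of $R$ in $\mathcal X$ lies in a single fiber with the same $K$-degree. Ampleness of $K_{X_t}$ therefore already forces all deformations to stay in $X_0$. A positive lower bound for the deformation space in $\mathcal X$ gives a contradiction only if you can show that $R$ has fewer deformations inside $X_0$, and a dimension count provides no such bound. For instance, if $X_0$ contained a plane $S\simeq\PP^2$ with $N_{S/X_0}\simeq\OO(-2)$, a line $\ell\subset S$ would have $K_{X_0}\cdot\ell=-1$ and $\chi(N_{\ell/\mathcal X})=2$. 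That is exactly the dimension of the family of lines in $S$, so nothing contradicts ampleness of $K_{X_t}$. Bend-and-break on a non-projective Moishezon manifold also needs justification.

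\textbf{Semiampleness.} Your reduction to Kawamata's theorem on $\tilde X$ is unchecked: $a\mu^*K_{X_0}-K_{\tilde X}=(a-1)\mu^*K_{X_0}-E_\mu$ need not be nef. The paper never proves nefness directly. It excludes stable fixed divisors by the global count $r+c=b_2(X_0)-\Ib_2(Y_0)=b_2(Y_t)-\Ib_2(Y_0)\le c_2(Y_0)\le c$. This count uses the trivial monodromy of the smooth family (purity of $H^2_{\lim}$) and nearby intersection complexes along codimension-two Du Val strata. The paper then removes the remaining at most one-dimensional base locus by a dimension count on the normalized graph of the relative canonical map.

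\textbf{Projectivity.} This step rests on a false claim: small crepant contractions of smooth threefolds do exist. A $(-1,-1)$-curve contracts to an ordinary double point (the Atiyah flop). The list $(-1,-1)$, $(0,-2)$, $(1,-3)$ is precisely Laufer's list of normal bundles of curves contracted by such small contractions. Non-projective small resolutions of nodal projective threefolds are the standard non-projective Moishezon threefolds, so projectivity cannot come from local classification and must come from the family.

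The paper's input is as follows. A contracted $C\simeq\PP^1$ has $\deg N_{C/\mathcal X}=-2$, hence $\chi(N_{C/\mathcal X})=1$ and a positive-dimensional Douady family. All members have $K$-degree $0$, so they stay in $X_0$, are contracted, and their image points move. Thus no contracted curve is rigid, and each deforms to an exceptional curve of a general transverse Du Val section.

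Even with a divisorial exceptional locus, your ``$\phi$-ample $-E$'' needs the right coefficients. The naive $-\sum E_i$ has degree $0$ on interior curves of a transverse $A_k$ chain ($k\ge3$) and degree $-1$ at a trivalent vertex. The paper also excludes divisors contracted to points (from $c=c_2(Y_0)$). It then uses the weights $A^{-1}\mathbf 1$, which are invariant under Dynkin-diagram monodromy. Finally it invokes Villalobos-Paz's criterion, because positivity on contracted curves does not by itself give relative ampleness for a proper morphism from a non-projective Moishezon space.
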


The essential case is dimension three. More precisely, we prove that
the canonical morphism of the central fiber is crepant, that its fibers have dimension at
most one, and that it contracts every exceptional prime divisor onto a curve;
see Theorem~\ref{thm:main}. In dimension two, the central fiber is a K\"ahler surface whenever the fibers over the punctured disc are K\"ahler surfaces, by the
theory of classification of surfaces and Siu's result \cite{Siu} (see also \cite{Bu}, \cite{La}, \cite{LL24}).

The dimension bound is optimal. For $n\ge4$ we have the following theorem.

\begin{theorem}\label{thm:KE-main}
For every integer $n\ge4$, there is a proper holomorphic submersion
$\pi\colon\mathcal M\to\Delta$ with connected compact fibers of
complex dimension $n$ such that
\begin{enumerate}
\item $K_{M_t}$ is ample for every $t\ne0$;
\item $M_0$ is Moishezon and non-K\"ahler.
\end{enumerate}
\end{theorem}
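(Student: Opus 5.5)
Our plan is to build the family by degenerating along a \emph{flop-type elementary transformation}, à la Hironaka. Start with a smooth projective $n$-fold $Y$, $n\ge4$, with $K_Y$ ample. Inside $Y$ choose a smooth subvariety $Z\cong\PP^{k}$ with normal bundle $N_{Z/Y}\cong\OO_{\PP^k}(-1)^{\oplus(k+1)}$; this needs $n=2k+1$, and the even case is handled below. Such a $Z$ can be flopped: blow it up, obtaining exceptional divisor $E\cong\PP^k\times\PP^k$, then contract along the other ruling to get $Y^+$ containing $Z^+\cong\PP^k$. Since $K$ is trivial along the flopping fibres, $K_{Y^+}$ is nef and big but not ample. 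This is why we do not work with the flop itself but only with deformations.

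The family comes from the standard deformation-to-normal-bundle / Atiyah-type trick carried out in a one-parameter family. Take a smooth family $\mathcal Y\to\Delta$ with $Y_0=Y$ in which $Z$ does not deform: its obstruction lies in $H^1(N_{Z/Y})=0$, but because $H^0(N)=0$ as well, $Z$ is rigid, and we choose the family so that $Z$ does not survive to $t\ne0$. This is the Atiyah mechanism, which needs the total space to contain a $(-1,\dots,-1)$-type locus. Next, perform the elementary transformation of $\mathcal Y$ along $Z\subset Y_0$ inside the total space: blow up $Z$ in $\mathcal Y$ and contract the exceptional divisor in a different direction. For this, $Z$ should have normal bundle in $\mathcal Y$ split so that the exceptional $\PP$-bundle admits a second ruling. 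The result is a smooth $\mathcal M\to\Delta$ with $M_t\cong Y_t$ for $t\ne0$, so $K_{M_t}$ is ample by openness of ampleness (part (1)), and $M_0$ is obtained from $Y_0$ by a modification along $Z$. Since $M_0$ is bimeromorphic to the projective $Y_0$, it is Moishezon.

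For non-Kählerness of $M_0$, we will exhibit a curve $C\subset M_0$ (a line in the new $\PP$) that is numerically a negative multiple of, or homologous to zero modulo, effective curves. Concretely, we aim for a relation $[C]+[C']=0$ in $H_2(M_0,\R)$ with $C,C'$ effective, as in Hironaka's example. Alternatively we use that $K_{M_0}\cdot C<0$ while $K_{M_0}$ is a limit of ample classes: if $M_0$ were Kähler, the $\partial\bar\partial$-lemma would make $c_1(K_{M_0})$ a limit of classes positive on curves that are limits of curves on $M_t$. The cleanest route is the homological one: arrange two distinct exceptional loci that are flopped in opposite directions, so the resulting curves satisfy $[C_1]=-[C_2]$, which is impossible for a Kähler class integrated over them. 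To cover all $n\ge4$, including even $n$, we will multiply a low-dimensional building block by a curve of genus $\ge2$ (or take a product with a stable variety). $K$ of a product of canonically polarized manifolds is ample, and a product with a non-Kähler factor is non-Kähler.

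The main obstacle is the existence of the initial configuration: an ample-canonical $Y$ containing a rigid $\PP^k$ with the right normal bundle, together with a smoothing direction in which the elementary transformation is smooth and ampleness survives on the general fibre. We would first try a hypersurface or complete intersection of high degree in a toric or projective bundle containing a prescribed linear subspace, computing the normal bundle by adjunction. The dimension restriction $n\ge4$ should appear precisely here: one needs codimension at least two for $Z$ and a genuinely non-Kähler flop, in contrast to the three-dimensional positive result (Theorem~\ref{thm:main-ke}).
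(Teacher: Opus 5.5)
Your proposal has a genuine gap: the starting configuration cannot exist, and the deformation step works in the opposite direction from what you need.

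\textbf{The setup and the deformation mechanism.} For $Z\cong\PP^k$ with $N_{Z/Y}\cong\OO(-1)^{\oplus(k+1)}$, adjunction gives $K_Y|_Z\cong K_Z\otimes(\det N_{Z/Y})^{-1}\cong\OO_Z$; you note this yourself when you say $K$ is trivial along the flopping fibres. So $K_Y$ is not ample, and part (1) cannot come from openness of ampleness at $t=0$. Moreover, $H^1(N_{Z/Y_0})=0$ does not make $Z$ disappear; it makes $Z$ unobstructed. The relative Douady space of $\mathcal Y/\Delta$ is then \'etale over $\Delta$ at $[Z]$, so $Z$ deforms to some $Z_t\subset Y_t$ for every small $t$. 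Since $K_{Y_t}\cdot\ell_t=K_{\mathcal Y}\cdot\ell=0$, $K_{Y_t}$ is not ample either. Equivalently, the extension $0\to N_{Z/Y_0}\to N_{Z/\mathcal Y}\to\OO_Z\to0$ has class in $H^1(N_{Z/Y_0})=0$, so $N_{Z/\mathcal Y}\cong\OO(-1)^{\oplus(k+1)}\oplus\OO$, and the exceptional divisor of $\Bl_Z\mathcal Y$ has no second ruling to contract.

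Atiyah's mechanism needs the opposite situation. You want a one-dimensional $H^1(N_{Z/Y_0})$ whose obstruction class is hit by the $t$-direction, so that $N_{Z/\mathcal Y}$ is the nonsplit extension $\OO(-1)^{\oplus(k+1)}$ and $Z$ does not survive. The paper gets exactly this from the Lagrangian plane $P=\ell^{[2]}\subset S_0^{[2]}$: there $N_{P/Y_0}\cong\Omega^1_P$, and the Euler sequence gives $N_{P/\mathcal Y}\cong\OO(-1)^{\oplus3}$ (Proposition~\ref{prop:local-model}). Ampleness for $t\ne0$ is then a separate argument. For the double cover branched along a divisor pulled back by the secant map, $K_{Z_t}\cong f_t^*H_t^{\otimes m}$, and $H_t$ is ample because the line disappears and $\Phi_t$ becomes finite.

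This also fixes the parity: the fibres have even dimension $2k$ and the total space has dimension $2k+1$. With your parity $n=2k+1$, the sharp case $n=4$ is out of reach. Products only raise dimension, and a three-dimensional block is excluded by Theorem~\ref{thm:main-ke}; indeed the case $k=1$ of your recipe would contradict that theorem.

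\textbf{The non-K\"ahler step.} A single elementary transformation gives no relation $[C]+[C']=0$ on its own. You need two disjoint transformable loci whose line classes are \emph{already} equal, and then you transform only one of them. In the paper the branch divisor avoids $P$, so $P$ lifts to two disjoint planes $P_\pm$. The Lefschetz theorem for cyclic covers on $Z_t$, $t\ne0$, together with local constancy of $R^2$ over the whole disc, gives $[C_+]=[C_-]\ne0$ in $H_2(Z_0,\R)$ (Lemma~\ref{lem:homology}). Transforming $P_+$ alone reverses the degree of every class on the line, by $E\cdot F_p=E\cdot F_q=-1$ (Lemma~\ref{lem:sign}), and leaves $C_-$ untouched. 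This yields $[C_+^\vee]+[C_-]=0$.

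You list smoothness of the new total space and the submersion property as an unresolved obstacle. In the paper these come from the explicit local model with the nondegenerate pairing $b(q,r)=\lambda(qr)$. Neither ``flopped in opposite directions'' nor the alternative via $K_{M_0}\cdot C<0$ and the $\partial\bar\partial$-lemma supplies the needed prior homology relation. Only your last step, taking products with curves of genus at least two for $n>4$, agrees with the paper.
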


Together, these two theorems settle the negative-curvature
smooth-limit problem in every dimension. The counterexamples give a
negative answer to Yau's question in dimensions at least four even when
every punctured fiber is assumed to be canonically polarized.

We briefly explain the proof of Theorem~\ref{thm:main-ke}. Riemann--Roch,
Kodaira vanishing on the punctured fibers, and semicontinuity first show
that $K_{X_0}$ is big, so $X_0$ is Moishezon. Invariance of plurigenera
\cite[Theorem~1.2(i)]{RaoTsai2022} and finite generation of canonical
rings \cite[Corollary~1.1.1(2),(3)]{BCHM2010} then give a flat
projective relative canonical model $Y\to\Delta$. The key step is a
comparison of second cohomology with intersection cohomology. On a
resolution of the central canonical map, the decomposition theorem
counts the contributions of stable fixed divisors and crepant
exceptional divisors. On the degeneration side, the triviality of the monodromy of
the original smooth family makes the limiting $H^2$ pure. A
weight-and-support argument for nearby intersection Hodge modules
shows that the additional degree-two contributions come only from
codimension-two Du Val strata. Comparing their ADE root lattices,
including the Weyl-group ambiguity of simultaneous resolutions,
forces every stable fixed divisor to disappear.

In dimension three, the remaining stable base locus has dimension at
most one. A dimension argument on the normalized relative graph of the
canonical map eliminates it, yielding a crepant canonical morphism.
The same cohomological count excludes divisors contracted to points.
Its contracted curves are smooth rational curves, and they deform to the
exceptional curves of a general transverse Du Val surface resolution.
An inverse-Cartan-matrix construction, invariant under Dynkin-diagram
monodromy, produces a line bundle of positive degree on every
contracted curve. The projectivity criterion
\cite[Corollary~3.4]{VillalobosPaz2021} then makes the canonical
morphism projective, and hence makes $X_0$ projective.

For Theorem~\ref{thm:KE-main}, we start with a pencil of smooth quartic
K3 surfaces whose central member contains exactly one line and whose
nearby members contain none. On the relative Hilbert square, the
secant-line morphism contracts the plane $\ell^{[2]}\simeq\PP^2$ in
the central fiber and is finite on the other fibers. A double cover
branched in a sufficiently positive divisor pulled back by this
morphism makes the punctured canonical bundles ample. The branch
divisor avoids the contracted plane, whose inverse image consists of
two disjoint planes. The cyclic-cover Lefschetz theorem
\cite[Proposition~1.11]{LanteriStruppa} shows that their line classes
are homologous. We perform an elementary transformation along only
one plane; an explicit local model verifies that the resulting total
space and the map to the disc remain smooth. On the central fiber,
this transformation reverses one line class and leaves the other
unchanged. The two resulting effective curves therefore represent nonzero
opposite homology classes, which rules out a K\"ahler form. The central
fiber remains Moishezon. Products with curves of genus at least two
give the examples in all higher dimensions.

Sections~\ref{sec:bigness}--\ref{sec:ADE} establish the
canonical-model and cohomological arguments in arbitrary dimension.
Section~\ref{sec:base-elimination} completes the threefold proof, and
Section~\ref{sec:quartic} constructs the counterexamples.

\medskip
\noindent
\textbf{Declaration on the Use of AI}: The author used ChatGPT (GPT-6.0) for language polishing and editing. ChatGPT (GPT-6.0) also assisted the author with the codimension-two ADE argument of Section~7 and with the construction of the counterexamples in Section~9. All mathematical arguments and statements are the responsibility of the author.

\section{Preliminaries: intersection theory and cohomological conventions}\label{sec:preliminaries}

%\begin{setup}\label{setup:main}
%In Sections~\ref{sec:bigness}--\ref{sec:exclude-divisors}, let
%$\pi\colon\mathcal X\to\Delta$ be a proper holomorphic submersion with
%connected compact fibers $X_t$ of complex dimension $n\ge1$, and assume
%that $K_{X_t}$ is ample for every $t\ne0$. Write $X=X_0$.
%Section~\ref{sec:base-elimination} specializes to $n=3$; the families
%in Section~\ref{sec:quartic} are constructed separately.
%\end{setup}
%

All complex spaces are taken with their reduced structure unless a fiber or a base ideal is explicitly considered scheme-theoretically. By a \emph{modification} we mean a proper bimeromorphic morphism. Cohomology has rational coefficients unless otherwise indicated. The hypercohomology of a complex is written $\mathbb H^j$; when no confusion can arise, it is also written $H^j$.

\subsection{Divisors, curve degrees, and the projection formula}\label{subsec:intersection-numbers}

Let $M$ be a smooth compact complex manifold. A Cartier divisor determines a line bundle, and a $\Q$-Cartier divisor is a rational divisor admitting a positive Cartier multiple. If $C\subset M$ is a reduced irreducible compact curve with normalization $\nu\colon\widetilde C\to C$, define
\begin{equation}\label{eq:curve-degree}
 L\cdot C=\deg\nu^*(L|_C)
          =\langle c_1(L),[C]\rangle.
\end{equation}
This is an integer, whether or not $M$ is K\"ahler. For an effective one-cycle $Z=\sum_i m_iC_i$ we set $L\cdot Z=\sum_i m_i(L\cdot C_i)$. Intersection with a $\Q$-Cartier divisor is defined by dividing the degree of a Cartier multiple.

Let $f\colon V\to M$ be a proper morphism, and let $R\subset V$ be a reduced irreducible compact curve. The projection formula reads
\begin{equation}\label{eq:projection-formula}
 f^*L\cdot R=L\cdot f_*[R]
 =\begin{cases}
 d\,L\cdot C,& f(R)=C\text{ is a curve and }d=\deg(R\to C),\\
 0,& f(R)\text{ is a point}.
 \end{cases}
\end{equation}
In the first case one applies the degree formula to the induced finite map of normalizations; the second follows because a line bundle pulled back from a point is trivial.

If $D\ge0$ is an effective $\Q$-Cartier divisor and $C\not\subset\Supp D$, then
\begin{equation}\label{eq:effective-curve}
 D\cdot C\ge0.
\end{equation}
Indeed, a defining section of a Cartier multiple restricts to a nonzero section on $\widetilde C$ and has an effective zero divisor. We assert no such nonnegativity when $C$ is contained in $\Supp D$.

On a smooth compact $n$-fold, top intersections are defined by
\[
 L_1\cdots L_n=\int_M c_1(L_1)\wedge\cdots\wedge c_1(L_n),
\]
using any closed representatives of the Chern classes. These numbers are topological and require no K\"ahler form. For a proper holomorphic submersion over a disc, Ehresmann's theorem together with the relative tangent bundle identifies all Chern numbers of the fibers.

\subsection{Positivity, stable base loci, and Moishezon manifolds}\label{subsec:positivity}

A line bundle $L$ on a compact complex $n$-fold is \emph{big} if its Iitaka dimension is $n$, equivalently if
\[
 \limsup_{m\to\infty}\frac{h^0(M,mL)}{m^n}>0.
\]
A compact complex manifold is Moishezon if and only if it carries a big line bundle \cite[Lemma~2.2]{RaoTsai2022}; equivalently, it is bimeromorphic to a projective variety. Resolving such a bimeromorphic correspondence gives a smooth projective modification. We use these characterizations in the analytic category.

A line bundle is \emph{semiample} if some positive tensor power is generated by its global sections. Its stable base locus is
\begin{equation}\label{eq:stable-base-definition}
 \mathbf B(L)=\bigcap_{m\ge1}\Bs|mL|,
\end{equation}
where $\Bs|mL|=M$ if $H^0(M,mL)=0$. When the section ring is finitely generated, a sufficiently divisible Veronese is generated in degree one; for such a degree $m$, the reduced zero set of the base ideal of $|mL|$ is exactly $\mathbf B(L)$. A prime divisor contained in this set is called a \emph{stable fixed prime divisor}.

Fix a Hermitian form $\omega$ on $M$. We say that $L$ is \emph{analytically nef} if for every $\varepsilon>0$ there is a smooth Hermitian metric $h_\varepsilon$ on $L$ such that
\begin{equation}\label{eq:analytic-nef}
 \frac{\sqrt{-1}}{2\pi}\Theta(L,h_\varepsilon)\ge-\varepsilon\omega.
\end{equation}
This definition is independent of the chosen Hermitian form. Semiampleness gives a smooth semipositive metric: pull back a Fubini--Study metric to a globally generated power and take its positive root. In particular,
\begin{equation}\label{eq:positivity-implications}
 L\text{ semiample}\ \Longrightarrow\ L\text{ analytically nef}
 \ \Longrightarrow\ L\cdot C\ge0\text{ for every compact curve }C.
\end{equation}
The second implication follows by integrating \eqref{eq:analytic-nef} on the normalization of $C$ and then letting $\varepsilon$ tend to zero.

\subsection{Discrepancies and exceptional negativity}\label{subsec:discrepancies}

Let $Y$ be a normal variety with $K_Y$ $\Q$-Cartier, and let $q\colon W\to Y$ be a resolution. With compatible canonical divisors, write
\begin{equation}\label{eq:discrepancy-convention}
 K_W=q^*K_Y+\sum_E a(E;Y)E.
\end{equation}
The coefficient $a(E;Y)$ is the \emph{discrepancy}; the log discrepancy is $1+a(E;Y)$. We always use the former convention. Thus canonical singularities mean $a(E;Y)\ge0$ for all exceptional divisorial valuations, and a valuation is called \emph{crepant} when its discrepancy is zero. On a smooth variety every exceptional divisorial valuation has strictly positive discrepancy \cite[Corollary~2.31(2) and Definition~2.34]{KollarMori1998}.

We use the following form of the negativity lemma. Suppose that $q\colon W\to Y$ is projective and bimeromorphic, with $W$ smooth and $Y$ normal. If a $q$-exceptional real Cartier divisor $D$ is $q$-nef, then
\begin{equation}\label{eq:negativity-prelim}
 D\le0.
\end{equation}
Here $q$-exceptional means that each component maps to a subset of codimension at least two, and $q$-nef means $D\cdot C\ge0$ for every $q$-contracted curve. This is the usual negativity lemma \cite[Lemma~3.39]{KollarMori1998}; its local surface proof applies also to projective morphisms with complex analytic target. In particular, an exceptional divisor numerically trivial on all contracted curves is zero, by applying \eqref{eq:negativity-prelim} to both signs.

\subsection{Cohomology with supports and exceptional divisor classes}\label{subsec:support-cohomology}

For a closed analytic subset $A$ of a smooth compact complex $n$-fold $M$, Alexander--Lefschetz duality identifies cohomology with supports and Borel--Moore homology:
\begin{equation}\label{eq:support-duality}
 H_A^k(M,\Q)\simeq H^{\mathrm{BM}}_{2n-k}(A,\Q).
\end{equation}
This follows by dualizing Poincar\'e--Alexander--Lefschetz duality over $\Q$ \cite[Chapter~VI, Section~8, Theorem~8.3]{Bredon1993}. Here $A$ is compact and triangulable, so ordinary and Borel--Moore homology agree. If $\codim_M A\ge2$, then $A$ has real dimension at most $2n-4$, and therefore
\begin{equation}\label{eq:codim-two-support}
 H_A^2(M,\Q)=H_A^3(M,\Q)=0,
 \qquad H^2(M,\Q)\simeq H^2(M\setminus A,\Q).
\end{equation}
The last assertion follows from the localization sequence.

If $E=\bigcup_{i=1}^k E_i$ is a union of prime divisors, its top Borel--Moore homology is generated by the fundamental classes of the $E_i$. Accordingly, the kernel of
\[
 H^2(M,\Q)\longrightarrow H^2(M\setminus E,\Q)
\]
is generated by the divisor classes $[E_i]=c_1(\OO_M(E_i))$. This is a spanning assertion; independence must be proved separately.

\subsection{Intersection complexes and the decomposition theorem}\label{subsec:intersection-complexes}

Let $V$ be an irreducible complex algebraic variety of dimension $d$, and let $j\colon U\hookrightarrow V$ be a dense smooth open subset. We normalize the intersection complex to be perverse:
\begin{equation}\label{eq:IC-convention}
 \IC_V=j_{!*}\Q_U[d],\qquad
 \IH^k(V,\Q)=\mathbb H^{k-d}(V,\IC_V),\qquad
 \Ib_k(V)=\dim_\Q\IH^k(V,\Q).
\end{equation}
Here $j_{!*}$ denotes intermediate extension in the category of perverse sheaves. If $V$ is smooth, then $\IC_V=\Q_V[d]$ and intersection cohomology agrees with ordinary cohomology. The shift convention is
\[
 \mathcal H^j(\mathcal K[a])=\mathcal H^{j+a}(\mathcal K).
\]
For the intersection complex and intermediate extension, see \cite[Sections~2.1 and~2.7]{deCataldoMigliorini2009}; the original construction is in \cite{GoreskyMacPherson1983}.

If $S$ is a smooth connected stratum of dimension $s$ and $L$ is a local system on it, write $\IC_{\overline S}(L)=j_{!*}L[s]$, extended by zero to the ambient variety. Its ordinary cohomology sheaves vanish below degree $-s$, and
\begin{equation}\label{eq:IC-lowest}
 \mathbb H^{-s}(\overline S,\IC_{\overline S}(L))=H^0(S,L).
\end{equation}
For a proper boundary stratum $S'$ of dimension $s'$, intermediate extension also gives the strict-support vanishing
\begin{equation}\label{eq:IC-boundary}
 \mathcal H^j\bigl(\IC_{\overline S}(L)|_{S'}\bigr)=0
 \qquad(j\ge-s').
\end{equation}
These are ordinary cohomology-sheaf statements, not a convention for perverse degrees.

For a projective resolution $q\colon W\to V$ with $W$ smooth of dimension $d$, the decomposition theorem gives a noncanonical direct-sum decomposition
\begin{equation}\label{eq:DT-prelim}
 Rq_*\Q_W[d]\simeq
 \IC_V\oplus\bigoplus_{S,a}\IC_{\overline S}(L_{S,a})[-a].
\end{equation}
The summands on the right other than $\IC_V$ have proper support and semisimple coefficient systems. Relative hard Lefschetz pairs the perverse shifts $a$ and $-a$ on each strict support, with the appropriate Tate twist. We do not claim that the decomposition is canonical: what is used in Section~\ref{sec:ih} is its existence, the dimensions of its cohomological contributions, and the hard Lefschetz pairing \cite[Theorem~2.1.1(a)--(c) and Remark~2.1.3]{deCataldoMigliorini2005}.

The distinction between support dimension and perverse shift is particularly useful in degree two. A factor $\IC_{\overline S}(L)[-a]$ can contribute to $H^2(W,\Q)$ only through
\[
 \mathbb H^{2-d-a}(\overline S,\IC_{\overline S}(L)).
\]
The lower bound on the ordinary degrees of the intersection complex therefore imposes $2-d-a\ge-s$. We will combine this elementary restriction with the fiber-dimension bound and relative hard Lefschetz.

\subsection{Nearby cycles, weights, and monodromy}\label{subsec:nearby-prelim}

For a holomorphic map $g\colon Y\to\Delta$, let $R\psi_g$ denote the nearby-cycle functor. We use the perverse normalization
\begin{equation}\label{eq:perverse-nearby-convention}
 {}^p\!\psi_g=R\psi_g[-1],
\end{equation}
which is exact for the perverse $t$-structure \cite[Section~5.8, nearby and vanishing cycles]{deCataldoMigliorini2009}. If $Y$ is normal of pure dimension $n+1$, its intersection Hodge module $\IC_Y^H$ is pure of weight $n+1$ and has rational realization $\IC_Y$. The nearby object
\[
 \mathcal J={}^p\!\psi_g\IC_Y^H
\]
is a mixed Hodge module on $Y_0$. We use the analytic nearby-cycle formalism \cite[Sections~2.2--2.4 and~2.17]{Saito1990} and its compatibility with projective direct images \cite[Theorem~2.14]{Saito1990}. On a projective variety, the degree-$k$ cohomology of a pure polarizable Hodge module of weight $w$ is pure of weight $w+k$ \cite[Theorem~5.3.1 and Remark~5.3.12]{Saito1988}.

When $g$ is smooth over $\Delta^*$, nearby cycles depend only on this restriction, so
\begin{equation}\label{eq:nearby-degree-prelim}
 \rat(\mathcal J)=R\psi_g\Q_Y[n],\qquad
 \mathbb H^{2-n}(Y_0,\mathcal J)=H^2_{\lim}(Y_t),
\end{equation}
where $\rat\colon \MHM(Y_0)\to \Perv_{\mathbb{Q}}(Y_0)$ is the functor that forgets the Hodge-theoretic structure and keeps only the underlying rational perverse sheaf. Consequently, weight two in this cohomological degree can only come from weight $n$ in the weight spectral sequence of $\mathcal J$; this explains the index $\Gr_n^W$ in Section~\ref{sec:nearby}. We never replace $\IC_Y$ by $\Q_Y[n+1]$ on the singular total space.

For a polarized variation of Hodge structure of weight two, the weight filtration on its limiting mixed Hodge structure is the monodromy weight filtration centered at two. In particular, if the monodromy is the identity, its logarithm $N$ is zero and the limit is pure of weight two \cite[Theorem~6.16 and~(6.5)]{Schmid1973}. This degeneration monodromy around $0\in\Delta$ must be distinguished from the monodromy obtained by moving along a codimension-two singular stratum of $Y_0$. The latter acts on a transverse ADE lattice and is considered in Section~\ref{sec:ADE}.

\section{Central bigness and the canonical fixed part}\label{sec:bigness}

The first reduction uses only the positivity of the punctured canonical bundles and the topology of the smooth family.

\begin{proposition}\label{prop:bigness}
Let
$\pi\colon\mathcal X\to\Delta$ be a proper holomorphic submersion
with connected compact fibers of dimension $n\ge1$ such that
$K_{X_t}$ is ample for $t\ne0$. Then $K_{X_0}$ is big and $X_0$ is Moishezon.
\end{proposition}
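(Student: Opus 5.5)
The plan is to bound $h^0(X_0,mK_{X_0})$ from below by $h^0(X_t,mK_{X_t})$ for $t\ne0$, and to compute the latter by Riemann--Roch. The relative canonical bundle $K_{\mathcal X/\Delta}$ is a holomorphic line bundle on $\mathcal X$ restricting to $K_{X_t}$ on every fiber. For $m\ge2$ and $t\ne0$, write $mK_{X_t}=K_{X_t}+(m-1)K_{X_t}$ with $(m-1)K_{X_t}$ ample, where $X_t$ is projective by Kodaira embedding. Kodaira vanishing then gives $H^i(X_t,mK_{X_t})=0$ for $i>0$, so
\[
 h^0(X_t,mK_{X_t})=\chi(X_t,mK_{X_t}).
\]
By Hirzebruch--Riemann--Roch, $\chi(X_t,mK_{X_t})$ is a polynomial in $m$ whose coefficients are Chern numbers of $X_t$ and intersection numbers of $K_{X_t}$ with Chern classes. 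The leading term is $\frac{m^n}{n!}K_{X_t}^n$, and $K_{X_t}^n>0$ by ampleness.

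Next, transport this to the central fiber. The Euler characteristic $\chi(X_t,\mathcal O(mK_{\mathcal X/\Delta})|_{X_t})$ is constant in $t$, since $\mathcal O(mK_{\mathcal X/\Delta})$ is flat over $\Delta$ (the map is a submersion). One can also see this topologically: Riemann--Roch on any compact complex manifold expresses $\chi$ through characteristic classes, and these are locally constant in the family by Ehresmann's theorem applied to the relative tangent bundle, as recorded in Subsection~\ref{subsec:intersection-numbers}. In particular $K_{X_0}^n=K_{X_t}^n>0$. Upper semicontinuity of $t\mapsto h^0(X_t,mK_{X_t})$ (Grauert) then gives
\[
 h^0(X_0,mK_{X_0})\ \ge\ h^0(X_t,mK_{X_t})=\chi(X_t,mK_{X_t})=\frac{K_{X_t}^n}{n!}m^n+O(m^{n-1})
\]
for general $t$ near $0$. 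Hence $\limsup h^0(X_0,mK_{X_0})/m^n>0$, so $K_{X_0}$ is big. By the characterization quoted in Subsection~\ref{subsec:positivity} \cite[Lemma~2.2]{RaoTsai2022}, $X_0$ is Moishezon.

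The only delicate point is that $X_0$ is not known to be Kähler, so nothing about vanishing or Hodge theory on $X_0$ may be used. The argument avoids this entirely: all vanishing is applied on the projective fibers $X_t$, and on $X_0$ it uses only Grauert's semicontinuity theorem for the coherent sheaf $\mathcal O(mK_{\mathcal X/\Delta})$, flat over the smooth one-dimensional base. It must also be checked that semicontinuity is applied with the inequality in the correct direction, namely that the jump occurs at the special point. This holds because the set where $h^0$ is at least a given value is analytic closed, so $h^0(X_0)\ge h^0(X_t)$ for $t$ near $0$.
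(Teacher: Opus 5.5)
Your proposal is correct and follows essentially the same route as the paper's proof. Both use Kodaira vanishing on the projective fibers over $\Delta^*$, a $t$-independent Riemann--Roch polynomial (you justify the independence by flatness of $\chi$ as well as by constancy of the Chern numbers), and Grauert's upper semicontinuity applied separately for each fixed $m$, followed by the big-line-bundle characterization of Moishezon manifolds.
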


\begin{proof}
For $t\ne0$, ampleness of $K_{X_t}$ implies projectivity, so the integer
\[
 v:=\int_{X_t}c_1(K_{X_t})^n
\]
is positive. By Ehresmann's theorem and the relative tangent bundle, all Chern numbers of the fibers are constant; hence $v$ is independent of $t$. Hirzebruch--Riemann--Roch gives a common polynomial
\[
 P(a):=\chi(X_t,aK_{X_t})=\frac{v}{n!}a^n+O(a^{n-1}).
\]
For $a\ge2$ and $t\ne0$, Kodaira vanishing, applied to $K_{X_t}+(a-1)K_{X_t}$, gives $H^j(X_t,aK_{X_t})=0$ for $j>0$ \cite[Theorem~4.2.1]{Lazarsfeld2004}. Consequently
\[
 h^0(X_t,aK_{X_t})=P(a).
\]
For each fixed $a$, upper semicontinuity \cite[Section~7, Satz~3]{Grauert1960} yields
\[
 h^0(X_0,aK_{X_0})\ge P(a).
\]
This is a separate semicontinuity statement for each fixed $a$; no neighborhood uniform in $a$ is needed. The positive growth of the leading term $a^n$ proves that $K_{X_0}$ is big, and hence $X_0$ is Moishezon by Section~\ref{subsec:positivity}.
\end{proof}

Let now $M$ be a smooth compact Moishezon manifold with $K_M$ big. A smooth projective modification has the same canonical ring as $M$: pluricanonical forms descend across the modification, because they agree on the isomorphism locus and extend across a subset of codimension at least two in the smooth target. By finite generation of the canonical ring for smooth projective varieties of general type \cite[Corollary~1.1.1(2),(3)]{BCHM2010},
\[
 V:=\Proj R(M,K_M)
\]
is a projective canonical model with canonical singularities and ample $\Q$-Cartier canonical divisor.

Choose a sufficiently divisible integer $m$ such that $mK_V$ is very ample and the $m$-th Veronese of the canonical ring is generated in degree one. Let
\[
 p\colon W\to M,\qquad q\colon W\to V
\]
be a smooth projective common resolution, and choose canonical divisors compatibly.

\begin{lemma}\label{lem:fixed-part}
There is an effective $q$-exceptional $\Q$-divisor $F$ such that
\begin{equation}\label{eq:fixed-part}
 K_W=p^*K_M+E_p=q^*K_V+E_q,
 \qquad
 p^*K_M=q^*K_V+F,
 \qquad 0\le F\le E_q.
\end{equation}
If $\mathfrak b$ is the base ideal of $|mK_M|$, then after choosing the common resolution to principalize $\mathfrak b$,
\begin{equation}\label{eq:base-ideal}
 \mathfrak b\OO_W=\OO_W(-mF),
 \qquad
 V(\mathfrak b)_{\mathrm{red}}=\mathbf B(K_M).
\end{equation}
Moreover,
\[
 F=0
 \quad\Longleftrightarrow\quad
 K_M\text{ is semiample}
 \quad\Longleftrightarrow\quad
 K_M\text{ is analytically nef}.
\]
\end{lemma}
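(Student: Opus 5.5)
The plan is to define $F$ through the base ideal and then compare with the discrepancy formulas. Since $M$ is smooth, $K_W=p^*K_M+E_p$ with $E_p\ge0$ $p$-exceptional. Since $V$ has canonical singularities, $K_W=q^*K_V+E_q$ with $E_q\ge0$ $q$-exceptional. Choose $W$ so that it also principalizes the base ideal $\mathfrak b$ of $|mK_M|$, and write $\mathfrak b\OO_W=\OO_W(-mF)$ with $F\ge0$.

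The first step is to show $p^*K_M=q^*K_V+F$. Because $R(M,K_M)=R(W,K_W)$ and the $m$-th Veronese is generated in degree one, the moving part of $|mp^*K_M|$ is base point free. It defines the morphism to $\PP(H^0(mK_M)^\vee)$, whose image is $V$ embedded by $|mK_V|$, and this morphism is $q$. Hence
\[
 m\,p^*K_M=q^*(mK_V)+mF
\]
as divisor classes, and in fact as divisors after compatible choices of canonical divisors. Subtracting this from the two expressions for $K_W$ gives $E_q=E_p+F$, so $F\le E_q$ because $E_p\ge0$.

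The second step is to check that $F$ is $q$-exceptional. Here I use that $p^*K_M-q^*K_V=F\ge0$ and that every section of $mK_W$ vanishes on $mE_p+mF$. A divisorial component of $F$ not contracted by $q$ would then be a component along which every section of $q^*(mK_V)$ also vanishes. This is impossible, since $mK_V$ is very ample, so it has no fixed divisor on $V$. The same principalization gives $V(\mathfrak b)_{\mathrm{red}}=p(\Supp F)$. This equals $\mathbf B(K_M)$ by the finite generation remark in Section~\ref{subsec:positivity}.

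For the equivalences, suppose first $F=0$. Then $\mathfrak b\OO_W=\OO_W$, so $\mathfrak b=\OO_M$ because $p$ is surjective, and $K_M$ is semiample. Semiample implies analytically nef by \eqref{eq:positivity-implications}. The converse direction, analytically nef implies $F=0$, is the main obstacle. The idea is to use that analytic nefness pulls back under $p$: pull back the metrics $h_\varepsilon$ and use $p^*\omega\le C\omega_W$. Then $p^*K_M\cdot C\ge0$ for every curve $C\subset W$. Since $q^*K_V$ is trivial on $q$-contracted curves, $F=p^*K_M-q^*K_V$ is $q$-nef. It is also $q$-exceptional and effective, so the negativity lemma \eqref{eq:negativity-prelim}, applied to the projective morphism $q$, gives $F\le0$. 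Hence $F=0$. The delicate point to double-check is that $q$ is projective, so that the negativity lemma applies. This holds since $W$ is projective, and with it the $q$-nef test only requires curves contracted by $q$.
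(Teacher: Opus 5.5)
Your proof is correct and follows essentially the paper's argument. You identify $F$ with $E_q-E_p$ by matching the fixed and moving parts of the pulled-back system $p^*|mK_M|$; you start from the base ideal, whereas the paper starts from $E_q-E_p$ and reads off the base ideal. You then get $F=0$ from analytic nefness by the same negativity-lemma argument for the projective morphism $q$. Your separate $q$-exceptionality argument is redundant, since $0\le F\le E_q$ already gives it. Closing the three conditions into a cycle, as you do, also makes the paper's separate direct argument that semiampleness forces $F=0$ unnecessary.
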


\begin{proof}
The birational correspondence of pluricanonical sections gives
\[
 p^*|mK_M|=q^*|mK_V|+m(E_q-E_p).
\]
At the generic point of every prime divisor of $W$, choose a section of $mK_V$ whose pullback does not vanish there. Since the left-hand side consists of effective divisors, $F:=E_q-E_p\ge0$. Because $E_p\ge0$, we also have $F\le E_q$, so $F$ is $q$-exceptional. Principalization gives \eqref{eq:base-ideal}, and generation of the Veronese in degree one identifies its support with the stable base locus.

If $F=0$, then $p^*(mK_M)=q^*(mK_V)$ is globally generated. Sections descend through $p_*\OO_W=\OO_M$, so $mK_M$ is globally generated. Conversely, semiampleness together with degree-one generation of the chosen Veronese forces $F=0$. Semiampleness also gives a smooth semipositive Fubini--Study metric, hence analytic nefness.

Finally, if $K_M$ is analytically nef, then for every $q$-contracted curve $C$ one has
\[
 F\cdot C=p^*K_M\cdot C\ge0.
\]
Thus $F$ is $q$-nef. The negativity lemma \eqref{eq:negativity-prelim} for the projective morphism $q$ gives $F\le0$, hence $F=0$.
\end{proof}

\section{The relative canonical model and purity of limiting \texorpdfstring{$H^2$}{H2}}\label{sec:relative-model}

The cohomological argument requires a projective family whose central fiber is the canonical model of $X_0$, not merely a projective family bimeromorphic to $\mathcal X$ over the punctured disc. We construct this family directly from the pluricanonical algebra.

\begin{lemma}\label{lem:uniform-generation}
Let
$\pi\colon\mathcal X\to\Delta$ be a proper holomorphic submersion
with connected compact fibers of dimension $n\ge1$ such that
$K_{X_t}$ is ample for $t\ne0$. There is a positive integer $m_*$ such that, for every $t\ne0$, the line bundle $m_*K_{X_t}$ is very ample and its section ring is generated in degree one.
\end{lemma}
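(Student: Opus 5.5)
The plan is to obtain a uniform bound from effective results for canonically polarized manifolds that depend only on $n$ and on numerical data, and those data are constant in the family. \emph{Uniformity} in $t$ is the whole content of the lemma. For a single fiber, some multiple of $K_{X_t}$ is very ample with section ring generated in degree one, but the multiple could a priori depend on $t$. The only invariants available uniformly are the dimension $n$, the Hilbert polynomial $P(a)=\chi(X_t,aK_{X_t})$, and the volume $v$. Proposition~\ref{prop:bigness} already showed these are independent of $t$ via Ehresmann's theorem and Hirzebruch--Riemann--Roch.

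\textbf{Step 1 (vanishing).} By Kodaira vanishing, $H^j(X_t,aK_{X_t})=0$ for $j>0$ and $a\ge2$, uniformly in $t\ne0$.

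\textbf{Step 2 (very ampleness and generation).} Invoke an effective Matsusaka/Fujita-type bound. The cleanest choice is the theorem of Hacon--McKernan, Takayama and Tsuji: there is $r_n$, depending only on $n$, such that $|rK_{X}|$ defines a birational map for every $r\ge r_n$ and every $n$-fold $X$ of general type. For ample $K_X$, I would instead use a Castelnuovo--Mumford regularity argument. If $L=aK_{X_t}$ is globally generated with $a\ge a_0(n)$ (Angehrn--Siu gives global generation of $K+mA$ for $m\ge\binom{n+1}{2}+1$; with $A=K_{X_t}$ this gives global generation of $cK_{X_t}$ for $c=\binom{n+1}{2}+2$), then Mumford's regularity lemma with respect to $L$ applies. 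The vanishings of Step 1 show that $\mathcal O$-twists such as $K_{X_t}+(n+1)L$ are $0$-regular. This yields surjectivity of the multiplication maps $H^0(kL)\otimes H^0(L)\to H^0((k+1)L)$ once the base degree absorbs one extra copy of $K$. Thus $m_*$ becomes an explicit multiple of $c$, for instance $m_*=(n+2)c$, and this number depends only on $n$.

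\textbf{Step 3 (very ampleness).} Very ampleness of $m_*K_{X_t}$ follows from the same regularity package: a globally generated ample $L$ whose section ring is generated in degree one gives a finite morphism which is an embedding once separation of points and tangents holds. I would get separation from Angehrn--Siu-type point separation applied to $K+m'K$ with $m'$ depending only on $n$. Alternatively, one can take $m_*$ large enough that the normal generation holds, and apply Koll\'ar's boundedness of canonically polarized manifolds with fixed Hilbert polynomial. The bounded family yields a uniform very ample multiple. This is the fallback if the explicit route fails.

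\textbf{Main obstacle.} The hardest point is degree-one generation (projective normality) uniformly in $t$, as opposed to mere very ampleness. I would handle it by Mumford's regularity: the multiplication map $H^0(L)\otimes H^0(kL)\to H^0((k+1)L)$ is surjective when $kL$ is $0$-regular with respect to the globally generated $L$. That regularity reduces to $H^i(X_t,(k-i)L)=0$ for $i>0$, which Step 1 supplies as long as $(k-i)c\ge2$ or the twist is of the form $K+$ample.
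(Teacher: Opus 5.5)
Your argument is correct, but it takes a genuinely different route from the paper.

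\textbf{The paper's route.} The paper gets uniformity from boundedness. Matsusaka's theorem gives an $a$, depending only on the constant Hilbert polynomial $P$, such that $aK_{X_t}$ is very ample for all $t\ne0$. The complete embeddings then lie in a single Hilbert scheme of $\PP^N$. Its uniform Castelnuovo--Mumford regularity bound gives $b$ with $H^1(\PP^N,\mathcal I_{X_t}(d))=0$ for $d\ge b$, and $m_*=ab$ works.

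\textbf{Your route.} You use effective results that depend only on $n$. Angehrn--Siu makes $L=cK_{X_t}$ globally generated, with $c=\binom{n+1}{2}+2$. For $k\ge n+1$ and $1\le i\le n$, Kodaira vanishing gives $H^i(X_t,(k-i)L)=0$, i.e.\ $\OO_{X_t}$ is $(n+1)$-regular with respect to $L$. Mumford's theorem for a globally generated ample bundle then makes $H^0(kL)\otimes H^0(L)\to H^0((k+1)L)$ surjective for $k\ge n+1$. Iterating, $H^0(kL)\otimes H^0(jkL)\to H^0((j+1)kL)$ is surjective for every $j\ge1$. So the section ring of $kL$ is generated in degree one for any $k\ge n+1$, including your $(n+2)c$.

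\textbf{Comparison.} Your route gives an explicit $m_*$ independent of the family, and it avoids both Matsusaka's theorem and Hilbert schemes. The price is the analytic Angehrn--Siu theorem. The paper's route needs the constancy of $P$ established in Proposition~\ref{prop:bigness}, but otherwise uses only algebraic boundedness.

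\textbf{Two tidy-ups.}
\begin{enumerate}
\item The claim that $K_{X_t}+(n+1)L$ is $0$-regular does not by itself give multiplication maps on powers of $L$. It only controls sections of $K_{X_t}+(\cdot)L$. The statement you actually need is the one in your last paragraph, that $kL$ is $0$-regular for $k\ge n+1$.
\item Step~3 is superfluous. An ample line bundle whose section ring $R$ is generated in degree one is automatically very ample. Indeed, $X_t\simeq\Proj R$, and the surjection $\operatorname{Sym}^\bullet H^0(L)\to R$ gives a closed immersion $\Proj R\hookrightarrow\PP(H^0(L))$ pulling $\OO(1)$ back to $L$. So you need neither point and tangent separation nor Koll\'ar's boundedness.
\end{enumerate}
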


\begin{proof}
By the Riemann--Roch calculation of Proposition~\ref{prop:bigness}, the polarized manifolds $(X_t,K_{X_t})$, $t\ne0$, all have the same Hilbert polynomial $P$. Matsusaka's theorem gives an integer $a\ge2$, depending only on $P$, such that $aK_{X_t}$ is very ample for every $t\ne0$ \cite{Matsusaka1972}; see \cite[Remark~1.2.12]{Lazarsfeld2004} for the precise statement. Kodaira vanishing gives $h^0(X_t,aK_{X_t})=P(a)$, so the corresponding complete embeddings lie in a fixed projective space $\PP^N$, where $N=P(a)-1$, and have Hilbert polynomial $P(ak)$.

The Hilbert scheme of subschemes of $\PP^N$ with this polynomial is of finite type, and its universal ideal sheaf satisfies a uniform Castelnuovo--Mumford regularity bound. In particular, there is an integer $b\ge1$ such that
\[
 H^1(\PP^N,\mathcal I_{X_t}(d))=0
 \qquad(t\ne0,\ d\ge b).
\]
Here the ideals refer to the embeddings by $|aK_{X_t}|$. For the regularity facts and their behavior in bounded families, see \cite[Section~1.8]{Lazarsfeld2004}. It follows that the restriction maps
\[
 H^0(\PP^N,\OO_{\PP^N}(kb))
 \longrightarrow H^0(X_t,kabK_{X_t})
\]
are surjective for all $k\ge1$ and $t\ne0$. Multiplication of homogeneous polynomials gives a surjection
\[
 \operatorname{Sym}^k H^0(\PP^N,\OO_{\PP^N}(b))
 \longrightarrow H^0(\PP^N,\OO_{\PP^N}(kb)),
\]
and restricting it to $X_t$ proves that the full section ring of $abK_{X_t}$ is generated in degree one. Thus $m_*=ab$ has the asserted properties, and the same holds for any positive multiple of $m_*$.
\end{proof}

\begin{proposition}\label{prop:relative-model}
After shrinking $\Delta$, there is a sufficiently divisible integer $m\ge2$ such that the graded $\OO_\Delta$-algebra
\[
 \mathscr R^{(m)}=\bigoplus_{k\ge0}\pi_*\omega_{\mathcal X/\Delta}^{\otimes km}
\]
is generated in degree one and
\begin{equation}\label{eq:relative-proj}
 g\colon Y=\Proj_\Delta\mathscr R^{(m)}\longrightarrow\Delta
\end{equation}
is flat and projective. For every $t\in\Delta$,
\[
 Y_t\simeq\Proj R(X_t,K_{X_t}),
 \qquad \OO_Y(1)|_{Y_t}\simeq\OO_{Y_t}(mK_{Y_t}).
\]
Over $\Delta^*$, the canonical map induces an isomorphism of families
\[
 \mathcal X_{\Delta^*}\simeq Y_{\Delta^*}.
\]
\end{proposition}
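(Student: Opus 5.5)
The plan is to show that the direct images $\pi_*\omega_{\mathcal X/\Delta}^{\otimes k}$ are locally free and commute with base change, then read off generation and the fibers of the relative $\Proj$ from the fiberwise statements. The key input is invariance of plurigenera for the family, \cite[Theorem~1.2(i)]{RaoTsai2022}: since $X_0$ is Moishezon with $K_{X_0}$ big (Proposition~\ref{prop:bigness}) and the nearby fibers are projective, $t\mapsto h^0(X_t,kK_{X_t})$ is constant for every $k\ge1$. Grauert's base change theorem \cite{Grauert1960} over the reduced base $\Delta$ then shows that each $\pi_*\omega_{\mathcal X/\Delta}^{\otimes k}$ is locally free. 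Moreover, the restriction map
\[
 \pi_*\omega^{\otimes k}_{\mathcal X/\Delta}\otimes\C(t)\to H^0(X_t,kK_{X_t})
\]
is an isomorphism for every $t$, including $t=0$. Hence $\mathscr R^{(m)}\otimes\C(t)\simeq R(X_t,mK_{X_t})$ as graded algebras.

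Next, choose $m$. By Lemma~\ref{lem:uniform-generation}, any multiple of $m_*$ gives degree-one generation on all fibers with $t\ne0$. For $t=0$, use \cite{BCHM2010} as in Section~\ref{sec:bigness}: the canonical ring $R(X_0,K_{X_0})$ is finitely generated, so some Veronese $R(X_0,m_0K_{X_0})$ is generated in degree one. Take $m$ to be a common multiple of $m_*$ and $m_0$, with $m\ge2$.

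Generation in degree one is a Nakayama-type statement. The cokernel of $\operatorname{Sym}^k\mathscr R^{(m)}_1\to\mathscr R^{(m)}_k$ is coherent, and its fiber at each $t$ is the cokernel of the corresponding fiberwise map. That fiberwise cokernel vanishes for every $t\in\Delta$, so the cokernel sheaf itself vanishes. This holds for all $k$ simultaneously, with no shrinking needed, because the fiberwise generation holds for every $k$. It follows that $Y$ is a closed subscheme of $\PP(\mathscr R^{(m)}_1)\simeq\PP^N\times\Delta$, so $g$ is projective. Since base change commutes with $\Proj$ of a degree-one-generated algebra, $Y_t=\Proj R(X_t,mK_{X_t})\simeq\Proj R(X_t,K_{X_t})$. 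Flatness follows because the graded pieces $\mathscr R^{(m)}_k$ are locally free, so $Y$ has a Hilbert polynomial independent of $t$ (namely $k\mapsto h^0(X_t,kmK_{X_t})$). Alternatively, flatness over a smooth curve reduces to the absence of $t$-torsion, which holds because $\mathscr R^{(m)}$ is torsion free.

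For the identification $\OO_Y(1)|_{Y_t}\simeq\OO_{Y_t}(mK_{Y_t})$ and the isomorphism over $\Delta^*$, I would argue as follows. On the canonical model $Y_t=\Proj R(X_t,K_{X_t})$, the tautological sheaf $\OO(1)$ of the $m$-th Veronese is $\OO_{Y_t}(mK_{Y_t})$, because $mK_{Y_t}$ is Cartier (choose $m$ divisible by the Cartier index of $K_{Y_0}$) and $R(Y_t,K_{Y_t})=R(X_t,K_{X_t})$. For $t\ne0$, $mK_{X_t}$ is very ample with degree-one generation, so the relative canonical map $\mathcal X_{\Delta^*}\to\PP^N\times\Delta^*$ is a closed embedding with image $Y_{\Delta^*}$. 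Relative very ampleness of $\omega^{\otimes m}_{\mathcal X/\Delta}$ over $\Delta^*$ is checked fiberwise, since $\pi_*$ commutes with base change.

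The main obstacle is the constancy of plurigenera at the central, a priori non-K\"ahler, fiber. This is exactly where \cite{RaoTsai2022} is needed, and it requires the Moishezon property of $X_0$ from Proposition~\ref{prop:bigness}. The phrase ``after shrinking $\Delta$'' only serves to trivialize the locally free sheaves $\mathscr R^{(m)}_1$.
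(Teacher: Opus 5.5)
Your proposal is correct and follows essentially the same route as the paper. Both arguments use invariance of plurigenera together with Grauert's base-change theorem to get local freeness and base change, take $m$ to be a common multiple of $m_*$ and the central Veronese index $m_0$, apply Nakayama's lemma fiberwise for degree-one generation, read off the fibers by base change of $\Proj$, and obtain flatness from torsion-freeness; your explicit divisibility of $m$ by the Cartier index of $K_{Y_0}$ is implicit in the paper's choice of $m_0$ with $m_0K_{Y_0}$ very ample.
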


\begin{proof}
Every fiber is Moishezon, by Proposition~\ref{prop:bigness} and the hypothesis on the punctured fibers. Invariance of plurigenera for smooth families of Moishezon manifolds \cite[Theorem~1.2(i)]{RaoTsai2022}, together with Grauert's base-change theorem \cite[Section~7, Satz~5]{Grauert1960}, implies that
\[
 E_d=\pi_*\omega_{\mathcal X/\Delta}^{\otimes d}
\]
is locally free and commutes with base change for every $d\ge0$. In particular, its fiber at $t$ is $H^0(X_t,dK_{X_t})$, and multiplication in these bundles agrees with fiberwise multiplication of pluricanonical sections.

By finite generation of the central canonical ring, choose $m_0$ such that its $m_0$-th Veronese is generated in degree one and $m_0K_{Y_0}$ is very ample on the central canonical model. Let $m\ge2$ be a common multiple of $m_0$ and the integer $m_*$ of Lemma~\ref{lem:uniform-generation}. For every $k\ge1$ the multiplication morphism
\begin{equation}\label{eq:relative-multiplication}
 \operatorname{Sym}^k E_m\longrightarrow E_{km}
\end{equation}
is surjective on every fiber, including $t=0$. Its coherent cokernel is therefore zero by Nakayama's lemma. Thus $\mathscr R^{(m)}$ is a quotient of $\operatorname{Sym}^{\bullet}E_m$, and its relative Proj is projective over $\Delta$. Note that $m$ was chosen uniformly before applying Nakayama's lemma; no intersection of infinitely many shrinking neighborhoods is involved.

Base change in every degree identifies the fiber algebra with
\[
 \mathscr R^{(m)}\otimes\C(t)
 =\bigoplus_{k\ge0}H^0(X_t,kmK_{X_t}).
\]
Relative Proj commutes with this base change, so $Y_t$ is the canonical model of $X_t$, with the asserted tautological line bundle. To check flatness, work at a point of the disc. Every graded piece of $\mathscr R^{(m)}$ is free over the local discrete valuation ring. Homogeneous localizations, and then their degree-zero parts, are torsion-free over this ring, hence flat. These rings give the standard affine charts of the relative Proj, which proves that $g$ is flat.

For $t\ne0$, ampleness of $K_{X_t}$ identifies $X_t$ with the Proj of its complete canonical ring. The relative evaluation map defined by $E_m$ is a morphism over $\Delta^*$ and restricts on each fiber to the corresponding pluricanonical embedding. Its image is $Y_{\Delta^*}$, and it is an isomorphism there. This proves the final assertion.
\end{proof}

\begin{lemma}\label{lem:purity}
The total space $Y$ of Proposition~\ref{prop:relative-model} is normal and of pure dimension $n+1$. Moreover, for $t\ne0$,
\[
 b_2(X_0)=b_2(Y_t),
\]
and the limiting mixed Hodge structure $H^2_{\lim}(Y_t)$ is pure of weight two.
\end{lemma}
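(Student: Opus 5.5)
We prove the three assertions in turn: normality and pure dimension from the fiberwise structure of $Y$, the Betti number equality from topological triviality of $\pi$, and purity from triviality of the monodromy.

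\emph{Normality and pure dimension.} By Proposition~\ref{prop:relative-model}, $g$ is flat and every fiber $Y_t$ is a canonical model. In particular each fiber is normal, irreducible of dimension $n$, and has canonical singularities, hence rational singularities, hence is Cohen--Macaulay. Over $\Delta^*$, $Y$ is isomorphic to the smooth manifold $\mathcal X_{\Delta^*}$. Flatness means $t$ is a nonzerodivisor on $\OO_Y$. Since $Y_0$ is $S_2$ and $R_1$, the standard local criterion (Serre's criterion: if $t$ is regular and $Y/tY$ is normal, then $Y$ is normal near $Y_0$) gives normality of $Y$ along $Y_0$; elsewhere $Y$ is smooth. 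For dimension, flatness over the disc excludes components supported in $Y_0$, so every irreducible component dominates $\Delta$. The generic fiber is connected of dimension $n$, so $Y$ is irreducible of pure dimension $n+1$.

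\emph{Betti numbers.} By Ehresmann's theorem, $\pi$ is $C^\infty$-trivial over the contractible disc. Hence $b_2(X_0)=b_2(X_t)$, and $X_t\simeq Y_t$ for $t\ne0$ by Proposition~\ref{prop:relative-model}.

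\emph{Purity.} $H^2_{\lim}(Y_t)$ is defined from the family $g$ restricted to $\Delta^*$, which is identified with $\pi|_{\Delta^*}$. Its underlying variation is $R^2\pi_*\Q$ on $\Delta^*$, a polarized VHS of weight two, polarized using the ample classes $K_{X_t}$ and the Lefschetz decomposition. Since $\pi$ extends smoothly over $0$, this local system extends to the constant local system $R^2\pi_*\Q$ on all of $\Delta$, so the monodromy $T$ around $0$ is the identity. Then $N=\log T=0$, and by Schmid's theorem as recalled in Section~\ref{subsec:nearby-prelim} the monodromy weight filtration centered at two is trivial, so the limit is pure of weight two.

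The main subtlety is compatibility. The limiting mixed Hodge structure computed via nearby cycles on the singular total space $Y$, i.e. $\mathbb H^{2-n}(Y_0,\mathcal J)$ in \eqref{eq:nearby-degree-prelim}, must agree with Schmid's limit for the VHS on $\Delta^*$. This is Saito's comparison, applied to the projective morphism $g$ that is smooth over $\Delta^*$. One also has to check that the variation on $\Delta^*$ is genuinely polarizable, which follows from the projectivity of $g$ over $\Delta^*$.
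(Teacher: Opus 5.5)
Your proposal is correct and follows essentially the same route as the paper. Normality comes from Serre's criterion ($S_2$ lifted through the nonzerodivisor $t$, and $R_1$ from normality of the central fiber), and pure dimension from flatness. The equality $b_2(X_0)=b_2(Y_t)$ comes from Ehresmann together with $\mathcal X_{\Delta^*}\simeq Y_{\Delta^*}$, and purity from the trivial monodromy of the constant local system $R^2\pi_*\Q$ on the whole disc together with Schmid's theorem. Your extra remark about matching Schmid's limit with the nearby-cycle description is not part of this lemma's proof in the paper; it is set up in Section~\ref{subsec:nearby-prelim}.
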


\begin{proof}
The fibers of $g$ are normal. Flatness over the smooth one-dimensional base, together with the $S_2$ property of the fibers, gives the $S_2$ property for $Y$. Regularity in codimension one follows from normality of the generic fiber and reducedness at the generic point of the central fiber, so $Y$ is normal by Serre's criterion. Flatness and the pure dimension $n$ of the fibers give pure dimension $n+1$ for $Y$.

Ehresmann's theorem applied to the original submersion gives $b_2(X_0)=b_2(X_t)$, and Proposition~\ref{prop:relative-model} identifies $X_t$ with $Y_t$ for $t\ne0$. The local system $R^2\pi_*\Q$ is defined over the whole disc and is constant; its restriction identifies with the second-cohomology local system of $Y_{\Delta^*}$, whose monodromy is consequently the identity. Since $g|_{\Delta^*}$ is smooth and projective, this local system underlies a polarized variation of Hodge structure. Its monodromy logarithm is zero, and the limiting weight filtration is concentrated in weight two \cite[Theorem~6.16 and~(6.5)]{Schmid1973}; see Section~\ref{subsec:nearby-prelim}.
\end{proof}

\section{Second cohomology of resolutions and intersection cohomology}\label{sec:ih}

We compare a smooth modification with its smooth target, and a projective resolution with its singular target.

\begin{lemma}\label{lem:smooth-modification}
Let $p\colon W\to M$ be a modification between smooth compact complex $n$-folds, with $W$ projective. Let $P_1,\dots,P_k$ be the $p$-exceptional prime divisors. Then
\begin{equation}\label{eq:smooth-modification}
 H^2(W,\Q)=p^*H^2(M,\Q)\oplus\bigoplus_{j=1}^k\Q[P_j].
\end{equation}
In particular,
\[
 b_2(W)=b_2(M)+k.
\]
\end{lemma}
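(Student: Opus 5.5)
We will prove \eqref{eq:smooth-modification} in three steps: injectivity of $p^*$, spanning of the complement by the $[P_j]$, and independence of everything on the right-hand side.

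\emph{Injectivity of $p^*$.} Since $p$ is a modification between smooth compact manifolds, it has degree one. The Gysin map $p_*\colon H^2(W,\Q)\to H^2(M,\Q)$ is defined by Poincar\'e duality as the pushforward on $H_{2n-2}$. The projection formula $p_*(p^*\alpha\cup[W])=\alpha\cup p_*[W]=\alpha\cup[M]$ then gives $p_*p^*=\mathrm{id}$. Hence $p^*$ is injective, and $H^2(W,\Q)=p^*H^2(M,\Q)\oplus\ker p_*$. No K\"ahler hypothesis is needed here, only orientations and fundamental classes.

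\emph{Spanning.} Let $A=p(\Exc(p))\subset M$ be the image of the exceptional locus. Since $M$ is smooth, hence normal, $A$ has codimension at least two. Then $p$ restricts to an isomorphism $W\setminus p^{-1}(A)\simeq M\setminus A$. Set $E=p^{-1}(A)$. This set is the union of the $P_j$ together with possibly lower-dimensional pieces, which do not contribute to top Borel--Moore homology.

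Take $\beta\in\ker p_*$. By \eqref{eq:codim-two-support}, $H^2(M)\simeq H^2(M\setminus A)$. The restriction $\beta|_{W\setminus E}$ corresponds to a class on $M\setminus A$, which extends uniquely to some $\alpha\in H^2(M)$. Then $\beta-p^*\alpha$ restricts to zero on $W\setminus E$. By Section~\ref{subsec:support-cohomology}, this difference is a combination $\sum c_j[P_j]$. We need the statement there for $E$ a union of prime divisors plus higher-codimension sets; this reduces to the divisorial case by applying \eqref{eq:codim-two-support} to the lower-dimensional part. Applying $p_*$, and using $p_*[P_j]=0$ because $p(P_j)$ has dimension at most $n-2$, we get $0=p_*\beta-\alpha$. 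So $\alpha=0$, and $\beta\in\sum\Q[P_j]$. Conversely each $[P_j]$ lies in $\ker p_*$. Thus $\ker p_*$ is spanned by the $[P_j]$.

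\emph{Independence; main obstacle.} The hard step is showing that the $[P_j]$ are linearly independent in $H^2(W,\Q)$. I would use the negativity lemma \eqref{eq:negativity-prelim}, which applies because $p$ is a projective morphism, as $W$ is projective. Suppose $D=\sum c_jP_j$ has $c_1(\OO_W(D))=0$ in $H^2(W,\Q)$. Then $D\cdot C=\langle c_1(\OO_W(D)),[C]\rangle=0$ for every $p$-contracted curve $C$, so $D$ is $p$-nef and $-D$ is $p$-nef. By \eqref{eq:negativity-prelim} applied to both signs, $D\le0$ and $-D\le0$, hence $D=0$ and all $c_j=0$. This uses the point remarked after \eqref{eq:negativity-prelim}: an exceptional divisor numerically trivial on contracted curves vanishes. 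The dimension count $b_2(W)=b_2(M)+k$ follows immediately.
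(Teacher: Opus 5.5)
Your proof is correct and follows essentially the same route as the paper. Spanning comes from Alexander--Lefschetz duality and the localization sequence, independence of the $[P_j]$ from the negativity lemma applied to $\pm D$, and injectivity of $p^*$ from $p_*p^*=\mathrm{id}$. The only difference is cosmetic: you get directness of the sum from $p_*[P_j]=0$ via the splitting $H^2(W)=p^*H^2(M)\oplus\ker p_*$, whereas the paper applies negativity to any $D$ with $[D]=p^*\gamma$. Also, your projection formula should read $p_*(p^*\alpha\cap[W])=\alpha\cap[M]$, with cap rather than cup products.
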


\begin{proof}
Let $A=p(\Exc p)$ and $U=M\setminus A$. Since $M$ is smooth, $\codim_MA\ge2$. By \eqref{eq:support-duality}, Alexander--Lefschetz duality gives
\[
 H^2_A(M,\Q)=H^3_A(M,\Q)=0,
\]
because the corresponding Borel--Moore homology degrees are $2n-2$ and $2n-3$, while $A$ has real dimension at most $2n-4$. Hence $H^2(M,\Q)\simeq H^2(U,\Q)$.

The localization sequence on $W$ shows that the kernel of restriction to $p^{-1}(U)$ is generated by the divisor classes $[P_j]$. Thus every class in $H^2(W)$ is a sum of a pullback class and exceptional divisor classes. If $D=\sum a_jP_j$ satisfies $[D]=p^*\gamma$, then $D\cdot C=0$ for every $p$-contracted curve $C$. Hence both $D$ and $-D$ are $p$-nef and $p_*D=0$, and applying the negativity lemma to both signs gives $D=0$. Finally, $p^*$ is injective: the topological pushforward for this degree-one map between oriented manifolds satisfies $p_*p^*=\mathrm{id}$. The sum is therefore direct.
\end{proof}

\begin{lemma}\label{lem:resolution-ih}
Let $q\colon W\to V$ be a projective resolution of a normal irreducible projective $n$-fold, with $W$ smooth and projective. Let $e(q)$ denote the number of $q$-exceptional prime divisors. Then
\begin{equation}\label{eq:resolution-ih}
 b_2(W)=\Ib_2(V)+e(q).
\end{equation}
\end{lemma}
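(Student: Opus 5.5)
The plan is to apply the decomposition theorem \eqref{eq:DT-prelim} to $q$ with $d=n$ and read off degree $2$ hypercohomology. Taking $\mathbb H^{2-n}$ of \eqref{eq:DT-prelim} gives
\[
 H^2(W,\Q)=\IH^2(V,\Q)\oplus\bigoplus_{S,a}\mathbb H^{2-n-a}\bigl(\overline S,\IC_{\overline S}(L_{S,a})\bigr).
\]
So it suffices to show that the extra summands contribute exactly $e(q)$ dimensions in total. I would sort them by the dimension $s$ of the strict support $S$ and by the perverse shift $a$.

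First I would bound which summands can contribute. From the remark after \eqref{eq:DT-prelim}, a summand contributes only if $2-n-a\ge -s$, that is, $a\le s-n+2$.

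Relative hard Lefschetz makes the shifts on each support symmetric, with some $a\ge0$ occurring whenever $a$ occurs. The fibers of $q$ over $S$ have dimension at most $n-1-s$ on a dense open subset. The support condition, or simply the fiber-dimension bound $|a|\le r(S)$ with $r$ the relative dimension defect, then forces $n-s\ge1+|a|$, after normalizing as in \cite{deCataldoMigliorini2005}. This gives the following cases.

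\begin{itemize}
\item If $s\le n-2$, then $a\le s-n+2\le0$. Combined with the symmetry and the bound $|a|\le n-s-2$, the contributing degree would have to lie below the range allowed by \eqref{eq:IC-lowest}, so these summands vanish.
\item If $s=n-1$, then $a\le1$. But a support of codimension one in the normal variety $V$ cannot occur, since $q$ is an isomorphism over codimension one. The only relevant case is therefore $s=n-2$, $a=0$, where the contribution is $\mathbb H^{2-n}=\mathbb H^{-s}=H^0(S,L)$ by \eqref{eq:IC-lowest}. More generally, a summand with $2-n-a=-s$ contributes $H^0(S,L_{S,a})$.
\end{itemize}

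Rather than trusting this sorting, I would identify the contributions more concretely. The divisorial part of the top degree of $R^{2(n-1-s)}q_*\Q$ on a stratum $S$ whose fibers have dimension $n-1-s$ is the local system $L_S$ spanned by the irreducible components of maximal dimension in the fibers. These are the top-dimensional fiber components of the exceptional divisors dominating $S$. The summand $\IC_{\overline S}(L_S)[-(n-2-s)]$ (top perverse shift $a=n-2-s$, satisfying $2-n-a=-s$) then contributes $H^0(S,L_S)$. Its dimension equals the number of monodromy orbits of such components, which is the number of $q$-exceptional prime divisors $E$ with $q(E)=\overline S$. Summing over $S$ gives $e(q)$.

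The hard-Lefschetz partners of these summands (shift $-(n-2-s)$) land in degree $2-n+(n-2-s)=-s$ only when the shift is $0$. Otherwise they contribute in degree $<-s$, where they vanish. Every remaining summand with $a<n-2-s$ contributes in degree $2-n-a>-s$ of a complex living on $\overline S$. I expect this to be the main obstacle. I would rule such contributions out by a dimension count on the degree-$2$ cohomology of $W$ along $q^{-1}(S)$: the stalks of $R^jq_*\Q$ at points of $S$ in degrees $j\le 2$ above the generic part are controlled by the Leray filtration together with \eqref{eq:IC-boundary}.

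As an independent cross-check of the count, I would compute the total dimension by a second route. Let $U\subset V$ be the smooth locus over which $q$ is an isomorphism. By the localization argument of Lemma~\ref{lem:smooth-modification}, the kernel of $H^2(W)\to H^2(q^{-1}U)$ is spanned by the classes $[E_j]$, and these are independent by the negativity lemma \eqref{eq:negativity-prelim}. On the other side, $\IH^2(V)$ injects into $H^2(V\setminus\Sigma)$ for $\Sigma$ of codimension $\ge2$ (intersection cohomology in low degree). The quotient $H^2(W)/\bigoplus\Q[E_j]$ injects into $H^2(q^{-1}U)$, and its image coincides with the image of $\IH^2(V)$, using that $\IC_V$ is a direct summand of $Rq_*\Q_W[n]$ and that the natural map $\IH^2(V)\to H^2(W)$ splits. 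Comparing the two descriptions yields $b_2(W)=\Ib_2(V)+e(q)$.
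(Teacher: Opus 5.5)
\textbf{Your decomposition-theorem route.} The bookkeeping you lead with does not work as written.

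\begin{itemize}
\item \emph{The shift bound.} The correct fiber-dimension bound is $s+2-n\le a\le n-2-s$, not $n-s\ge 1+|a|$. At a general point of $S$, the summand $\IC_{\overline S}(L)[-a]$ has stalk in degree $a-s$, i.e.\ fiber cohomology of degree $n+a-s\le 2(n-1-s)$. Verdier duality gives the lower bound.
\item \emph{Your two bullets.} Together with your correct constraint $a\le s+2-n$, this bound forces $a=s+2-n$ on every support with $s\le n-2$. Such a summand contributes exactly $H^0(S,L_{S,s+2-n})$ by \eqref{eq:IC-lowest}. So your first bullet (these summands vanish) is wrong. Your second bullet (only $s=n-2$, $a=0$ matters) is also wrong: it would miss every exceptional divisor whose center has dimension $<n-2$.
\item \emph{The reversed shift.} In your ``concrete'' paragraph, the top shift $a=n-2-s$ lands in degree $2-n-a=s+4-2n$. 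This is $<-s$ when $s<n-2$, so that summand contributes nothing to $H^2(W)$. Its hard Lefschetz partner $a=-(n-2-s)$ always lands in degree $-s$, as your own arithmetic shows, and it is the summand that counts.
\item \emph{The missing link} is exactly the paper's argument. Relative hard Lefschetz gives $L_{S,s+2-n}\simeq L_{S,n-2-s}$. The latter is the top fiber cohomology $R^{2(n-1-s)}q_*\Q|_S$, using \eqref{eq:IC-boundary} to exclude larger supports and $\IC_V$ from that stalk degree. Its invariants count the exceptional divisors onto $\overline S$.
\item \emph{The ``main obstacle''} is an artifact of the same sign error. With the two-sided bound, every contribution sits in degree $\le -s$, so nothing remains to be ruled out.
\end{itemize}

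\textbf{Your ``cross-check''.} This, by contrast, is a complete and correct proof by a different route, and you should make it the main argument. Let $U$ be the isomorphism locus of $q$, so $\Sigma=V\setminus U$ has codimension $\ge2$.

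\begin{itemize}
\item \emph{The kernel.} The kernel of $H^2(W)\to H^2(q^{-1}U)$ is spanned by the classes of the $(n-1)$-dimensional components of $q^{-1}(\Sigma)$. These are exactly the $q$-exceptional prime divisors. They are independent by \eqref{eq:negativity-prelim} applied to both signs, as in Lemma~\ref{lem:smooth-modification}.
\item \emph{The image: two points to state explicitly.} First, every summand of \eqref{eq:DT-prelim} other than $\IC_V$ is supported in $\Sigma$ and dies on $U$, while the $\IC_V$-summand restricts to an automorphism of $\Q_U[n]$. Hence the image has dimension equal to the rank of the restriction $\IH^2(V)\to H^2(U)$. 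Second, this restriction is injective. Its kernel is the image of $\mathbb H^{2-n}(\Sigma,i^!\IC_V)$, which vanishes because $i^!\IC_V\in{}^p\!D^{\ge1}(\Sigma)\subset D^{\ge 1-\dim\Sigma}$ and $\dim\Sigma\le n-2$.
\end{itemize}

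\textbf{Comparison with the paper.} Your route needs only the existence of $\IC_V$ as a direct summand. It uses no perverse-shift bounds, no relative hard Lefschetz, and no top-fiber permutation systems, and it identifies the extra classes canonically as divisor classes. The paper's stratum-by-stratum computation gives finer information. It shows that the divisors over $\overline S$ are carried by the single summand $\IC_{\overline S}(L_{S,s+2-n})[n-2-s]$, parallel to the support-and-degree analysis of $\Gr_n^W\mathcal J$ in Proposition~\ref{prop:only-codim2}.
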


\begin{proof}
Choose an algebraic Whitney stratification adapted to $q$. Since $V$ is normal, every nontrivial strict support has dimension $s\le n-2$. By the decomposition theorem and semisimplicity, with the normalization of Section~\ref{subsec:intersection-complexes},
\begin{equation}\label{eq:decomposition-general}
 Rq_*\Q_W[n]
 \simeq
 \IC_V\oplus\bigoplus_{S,a}\IC_{\overline S}(L_{S,a})[-a].
\end{equation}
The full-support summand is $\IC_V$, with zero perverse shift.

Fix a connected smooth stratum $S$ of dimension $s\le n-2$, and let $d_S$ be the dimension of a general fiber over $S$. Birationality gives
\[
 d_S+s\le n-1.
\]
At a general point of $S$, the summand with shift $[-a]$ contributes in ordinary stalk degree $a-s$. In $Rq_*\Q_W[n]$ this is fiber cohomology in degree $n+a-s$, hence
\[
 n+a-s\le2d_S\le2(n-1-s).
\]
Therefore
\begin{equation}\label{eq:perverse-bound-upper}
 a\le n-2-s.
\end{equation}
Verdier duality gives the opposite inequality
\begin{equation}\label{eq:perverse-bound-both}
 s+2-n\le a\le n-2-s.
\end{equation}

Now
\[
 H^2(W,\Q)=H^{2-n}(V,Rq_*\Q_W[n]).
\]
The complex $\IC_{\overline S}(L)$ belongs to the ordinary truncation category $D^{\ge-s}$. Thus a shifted factor can contribute to $H^{2-n}$ only if
\[
 2-n-a\ge-s,
 \quad\text{i.e.}\quad
 a\le s+2-n.
\]
Together with \eqref{eq:perverse-bound-both}, this forces
\[
 a=s+2-n.
\]
By \eqref{eq:IC-lowest}, its contribution is the lowest cohomology group
\[
 H^{2-n-a}(V,\IC_{\overline S}(L_{S,a}))
 =H^{-s}(V,\IC_{\overline S}(L_{S,a}))
 =H^0(S,L_{S,s+2-n}).
\]
Consequently
\begin{equation}\label{eq:H2-support-sum}
 b_2(W)=\Ib_2(V)+\sum_S h^0(S,L_{S,s+2-n}).
\end{equation}

Relative hard Lefschetz identifies, on each strict support,
\[
 L_{S,s+2-n}\simeq L_{S,n-2-s}
\]
(up to a Tate twist, which is irrelevant for ranks and invariant subspaces). We claim that the latter is the top-fiber permutation system
\begin{equation}\label{eq:top-fiber-system}
 T_S:=R^{2(n-1-s)}q_*\Q_W|_S.
\end{equation}
Indeed, the relevant ordinary stalk degree of $Rq_*\Q_W[n]$ is
\[
 n-2-2s.
\]
A larger strict support $T\supsetneq S$ of dimension $t>s$ has perverse shift $a_T\le n-2-t$ by \eqref{eq:perverse-bound-upper}. To enter this stalk degree, its intersection complex would have to contribute in ordinary degree at least
\[
 n-2-2s-a_T\ge t-2s>-s.
\]
The boundary vanishing \eqref{eq:IC-boundary} excludes these degrees. The full-support summand $\IC_V$ makes no contribution for the same reason, since $n-2-2s\ge-s$. Smaller supports do not meet a general point of $S$. This proves the asserted identification with \eqref{eq:top-fiber-system}.

The top cohomology of a complex algebraic fiber has a basis dual to its top-dimensional irreducible components, and monodromy only permutes these classes. A monodromy orbit of top-dimensional fiber components closes up to exactly one exceptional prime divisor of $W$ mapping onto $S$. Hence
\[
 h^0(S,T_S)=\#\{E\subset W:E\text{ is }q\text{-exceptional and }q(E)=\overline S\}.
\]
Summing over $S$ in \eqref{eq:H2-support-sum} gives \eqref{eq:resolution-ih}.
\end{proof}

\begin{theorem}
\label{thm:ih-count-general}
Let $M$ be a smooth compact Moishezon manifold with $K_M$ big,
and let $V$ be its canonical model. If $r$ is the number of stable
fixed prime divisors of $K_M$ and $c$ is the number of crepant prime
divisors on $M$ contracted by the canonical map, then
\begin{equation}\label{eq:ih-count-general}
 b_2(M)-\Ib_2(V)=r+c.
\end{equation}
\end{theorem}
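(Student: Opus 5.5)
Choose a smooth projective common resolution $p\colon W\to M$, $q\colon W\to V$ as in Lemma~\ref{lem:fixed-part}. Lemma~\ref{lem:smooth-modification} gives $b_2(W)=b_2(M)+e(p)$, and Lemma~\ref{lem:resolution-ih} gives $b_2(W)=\Ib_2(V)+e(q)$. Subtracting,
\[
 b_2(M)-\Ib_2(V)=e(q)-e(p).
\]
So the whole proof reduces to showing $e(q)-e(p)=r+c$. The natural way is to sort the prime divisors of $W$ into two classes. The first class consists of the $p$-exceptional ones. The second consists of the strict transforms of prime divisors $D\subset M$ that are contracted by the canonical map, meaning their image in $V$ has codimension at least two.

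First, every $p$-exceptional prime divisor is $q$-exceptional. If $P$ is $p$-exceptional, then $a(P;M)>0$ because $M$ is smooth. From \eqref{eq:fixed-part} the coefficient of $P$ in $E_q$ equals $a(P;M)+\operatorname{mult}_P F>0$. If $P$ were not $q$-exceptional, it would be a divisor on $V$ and its coefficient in $E_q$ would be zero, a contradiction. Hence $e(q)-e(p)$ is the number of prime divisors $D\subset M$ whose strict transform is $q$-exceptional, i.e.\ the divisors of $M$ contracted by the canonical map.

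It remains to show that these contracted divisors $D$ are exactly the $r$ stable fixed prime divisors together with the $c$ crepant ones, and that these two sets are disjoint. For such a $D$, with strict transform $\widetilde D$, \eqref{eq:fixed-part} gives
\[
 \operatorname{mult}_{\widetilde D}F=\operatorname{mult}_{\widetilde D}E_q=a(\widetilde D;V)\ge0,
\]
since $\widetilde D$ is not $p$-exceptional and $V$ has canonical singularities. By \eqref{eq:base-ideal}, $D\subset\mathbf B(K_M)$ exactly when $\operatorname{mult}_{\widetilde D}F>0$. So $D$ is a stable fixed prime divisor exactly when its discrepancy over $V$ is positive, and crepant exactly when that discrepancy is zero. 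The two sets are therefore disjoint.

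For the converse, every stable fixed prime divisor $D$ has $\widetilde D$ in $\Supp F\subset\Supp E_q$, so $\widetilde D$ is $q$-exceptional and $D$ is contracted. Every crepant divisor is contracted by definition. This gives $e(q)-e(p)=r+c$, as required.

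The main point needing care is the meaning of ``crepant prime divisor contracted by the canonical map''. I will take it to mean a prime divisor $D\subset M$, contracted by the bimeromorphic map $M\dashrightarrow V$, whose valuation has discrepancy $0$ over $V$. I must also check that ``contracted'' is independent of the choice of $W$. It is, because whether the strict transform is $q$-exceptional depends only on the valuation of $D$.
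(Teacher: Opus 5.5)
Your proposal is correct and follows the paper's proof essentially step for step. You combine Lemmas~\ref{lem:smooth-modification} and~\ref{lem:resolution-ih} to reduce to $e(q)-e(p)=r+c$, show that $p$-exceptional divisors are $q$-exceptional via positive discrepancy over the smooth manifold $M$, and split the remaining $q$-exceptional divisors according to whether $a(\widetilde D;V)=\operatorname{mult}_{\widetilde D}F$ is positive (stable fixed) or zero (crepant); your use of \eqref{eq:base-ideal} to identify $\operatorname{mult}_{\widetilde D}F>0$ with $D\subset\mathbf B(K_M)$ just makes explicit a step the paper states without detail.
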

\begin{proof}
Take a common smooth projective resolution
\[
 p\colon W\to M,
 \qquad
 q\colon W\to V.
\]
Let $P$ and $Q$ be the sets of $p$-exceptional and $q$-exceptional prime divisors. With the notation of Lemma~\ref{lem:fixed-part}, every $p$-exceptional divisor has positive discrepancy over the smooth manifold $M$; since $F\ge0$, it is also $q$-exceptional. Thus $P\subset Q$.

The divisors in $Q\setminus P$ are strict transforms of prime divisors on $M$ contracted by the canonical map. For such a divisor the discrepancy over $M$ is zero. Its discrepancy over $V$ is positive precisely when it is a stable fixed divisor, and zero precisely when it is crepant and not fixed. Therefore
\[
 |Q|-|P|=r+c.
\]
By Lemmas~\ref{lem:smooth-modification} and \ref{lem:resolution-ih},
\[
 b_2(M)+|P|=b_2(W)=\Ib_2(V)+|Q|.
\]
Subtracting proves \eqref{eq:ih-count-general}.
\end{proof}

\section{Nearby intersection complexes in arbitrary dimension}\label{sec:nearby}

We use mixed Hodge modules on complex analytic spaces \cite[Section~2.17]{Saito1990} and the nearby intersection-complex formalism described in \cite[Sections~2.2--2.4]{KerrLazaSaito2022}. Define
\begin{equation}\label{eq:J-general}
 \mathcal J:={}^p\!\psi_g\IC_Y^H
 =R\psi_g\IC_Y^H[-1]
 \in\MHM(Y_0).
\end{equation}
The Hodge module $\IC_Y^H$ is pure of weight $n+1$. The shifted nearby-cycle functor is perverse-exact, so the rational realization of $\mathcal J$ is perverse.

Over the punctured disc $Y$ is smooth, so nearby cycles only see $\Q[n+1]$ there. Hence
\begin{equation}\label{eq:J-rational}
 \rat(\mathcal J)\simeq R\psi_g\Q_Y[n],
 \qquad
 H^{2-n}(Y_0,\mathcal J)\simeq H^2_{\lim}(Y_t).
\end{equation}

\begin{lemma}\label{lem:weight-selection}
If $H^2_{\lim}(Y_t)$ is pure of weight two, then
\begin{equation}\label{eq:weight-selection}
 b_2(Y_t)
 \le
 \dim_{\Q}H^{2-n}\bigl(Y_0,\Gr_n^W\mathcal J\bigr).
\end{equation}
\end{lemma}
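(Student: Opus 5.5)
The plan is to run the weight spectral sequence attached to the weight filtration $W_\bullet$ of the mixed Hodge module $\mathcal J$, and combine Saito's purity theorem for the graded pieces with the identification \eqref{eq:J-rational}. Since $Y_0$ is projective (Proposition~\ref{prop:relative-model}) and each $\Gr_k^W\mathcal J$ is a pure polarizable Hodge module of weight $k$, the group $H^{j}(Y_0,\Gr_k^W\mathcal J)$ is pure of weight $k+j$ \cite[Theorem~5.3.1 and Remark~5.3.12]{Saito1988}. The weight spectral sequence
\[
 E_1^{-k,j+k}=H^{j}(Y_0,\Gr_k^W\mathcal J)\Longrightarrow H^{j}(Y_0,\mathcal J)
\]
is a spectral sequence of mixed Hodge structures. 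By Saito's theory it degenerates at $E_2$, and the induced filtration on the abutment is the weight filtration of $H^j(Y_0,\mathcal J)$, up to the standard shift by $j$.

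First, take $j=2-n$. By \eqref{eq:J-rational}, $H^{2-n}(Y_0,\mathcal J)\simeq H^2_{\lim}(Y_t)$. Second, track weights. The term coming from $\Gr_k^W\mathcal J$ is pure of weight $k+2-n$, so the only contributions of weight two come from $k=n$.

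Third, use the hypothesis. If $H^2_{\lim}(Y_t)$ is pure of weight two, then the whole group equals its $\Gr^W_2$. This graded piece is the $E_\infty=E_2$ term at $k=n$, which is a subquotient of $E_1=H^{2-n}(Y_0,\Gr_n^W\mathcal J)$: the $d_1$-cycles modulo $d_1$-boundaries. Comparing dimensions gives
\[
 b_2(Y_t)=\dim H^2_{\lim}(Y_t)=\dim E_2^{-n,2}\le\dim H^{2-n}(Y_0,\Gr_n^W\mathcal J),
\]
which is \eqref{eq:weight-selection}. No degeneration statement is really needed for the inequality: $E_\infty$ is always a subquotient of $E_1$, and strictness of morphisms of mixed Hodge structures identifies $\Gr^W$ of the abutment with $\bigoplus_k E_\infty$.

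The main obstacle is bookkeeping. One must check that the weight filtration on $H^{2-n}(Y_0,\mathcal J)$ induced through Saito's formalism coincides with the limiting mixed Hodge structure's weight filtration, with the right normalization. The relevant facts are the compatibility of ${}^p\!\psi_g$ with the Steenbrink--Schmid limit on cohomology, via projective direct image to the disc \cite[Theorem~2.14]{Saito1990}, and the shift convention ${}^p\!\psi_g=R\psi_g[-1]$. I would verify this by pushing forward along $g$ and comparing, on the smooth part, with the nearby cycles of the variation $R^2g_*\Q$ on $\Delta^*$. Under that comparison, the weight $k+j$ contribution of $\Gr^W_k$ matches the weight of the limit, which forces the index $k=n$ exactly as stated before \eqref{eq:nearby-degree-prelim}.
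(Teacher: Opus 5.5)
Your proposal is correct and follows essentially the same route as the paper. Both arguments run the weight spectral sequence of $\mathcal J$, use Saito's purity of $H^{j}(Y_0,\Gr_k^W\mathcal J)$ in weight $k+j$ on the projective fiber $Y_0$, select $k=n$ in degree $2-n$, and conclude that a pure weight-two abutment is a subquotient of $E_1^{-n,2}=H^{2-n}(Y_0,\Gr_n^W\mathcal J)$. Your remarks on strictness and on matching the Hodge-module weights with the limiting mixed Hodge structure via projective direct image make explicit what the paper uses implicitly through \eqref{eq:J-rational} and Saito's compatibility of nearby cycles with projective direct images.
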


\begin{proof}
The weight filtration gives a spectral sequence of mixed Hodge structures
\[
 E_1^{p,q}
 =H^{p+q}\bigl(Y_0,\Gr_{-p}^W\mathcal J\bigr)
 \Longrightarrow
 H^{p+q}(Y_0,\mathcal J).
\]
Since $Y_0$ is projective and $\Gr_w^W\mathcal J$ is pure polarizable of weight $w$, its $k$-th cohomology is pure of weight $w+k$ \cite[Theorem~5.3.1 and Remark~5.3.12]{Saito1988}. Thus $E_1^{p,q}$ is pure of weight $q$. The differential $d_1$ preserves $q$, whereas every $d_r$ with $r\ge2$ changes the weight and therefore vanishes; this is the weight spectral sequence of \cite[Proposition~2.15]{Saito1990}.

In total degree $2-n$, the weight-two contribution occurs at
\[
 p=-n,\qquad q=2.
\]
Because the abutment is assumed entirely pure of weight two, it is a subquotient of
\[
 E_1^{-n,2}=H^{2-n}(Y_0,\Gr_n^W\mathcal J).
\]
Taking dimensions gives \eqref{eq:weight-selection}.
\end{proof}

\begin{proposition}\label{prop:only-codim2}
Choose connected dense smooth open strata $S_\alpha$ in the codimension-two irreducible components of $\operatorname{Sing}Y_0$. There are semisimple local systems $M_\alpha$ on $S_\alpha$, possibly zero, such that
\begin{equation}\label{eq:strict-support-general}
 \dim H^{2-n}(Y_0,\Gr_n^W\mathcal J)
 =\Ib_2(Y_0)+\sum_\alpha h^0(S_\alpha,M_\alpha).
\end{equation}
\end{proposition}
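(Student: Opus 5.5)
We decompose the pure weight-$n$ graded piece $\Gr_n^W\mathcal J$ by strict support and then run the same degree bookkeeping as in Lemma~\ref{lem:resolution-ih}.

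\emph{Step 1: decomposition by strict support.} Since $\Gr_n^W\mathcal J$ is a pure polarizable Hodge module, Saito's structure theorem gives a decomposition
\[
 \Gr_n^W\mathcal J\simeq\bigoplus_{Z}\IC^H_{Z}(L_Z),
\]
indexed by irreducible closed subvarieties $Z\subset Y_0$, with $L_Z$ a polarizable variation on a dense open subset of $Z$. No shifts appear, because $\mathcal J$ is a single mixed Hodge module rather than a complex.

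\emph{Step 2: the full-support summand is $\IC_{Y_0}$.} Over the smooth locus $Y_0^{\circ}$ of $Y_0$, the total space $Y$ is smooth near $Y_0^{\circ}$ and $g$ is a submersion there. Hence $\IC_Y^H=\Q^H[n+1]$ near $Y_0^\circ$, and $\mathcal J|_{Y_0^\circ}=\Q^H_{Y_0^\circ}[n]$, which is pure of weight $n$. So the summand with support $Y_0$ is exactly $\IC^H_{Y_0}$, and it contributes $H^{2-n}(Y_0,\IC_{Y_0})=\IH^2(Y_0)$, i.e.\ $\Ib_2(Y_0)$.

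\emph{Step 3: only codimension-two supports contribute in degree $2-n$.} Every other summand is supported in $\operatorname{Sing}Y_0$, which has codimension at least two because $Y_0$ is normal. A summand with support $Z$ of dimension $s$ lies in ordinary degrees $\ge -s$, by the lower bound recalled in Section~\ref{subsec:intersection-complexes}. It therefore contributes to $H^{2-n}$ only if $2-n\ge -s$, i.e.\ $s\ge n-2$. Thus only the codimension-two components survive, and $2-n=-s$ is precisely their lowest degree. By \eqref{eq:IC-lowest} each such summand contributes $H^0(S_\alpha,M_\alpha)$, where $M_\alpha$ is the underlying rational local system of $L_{Z_\alpha}$ restricted to the chosen stratum $S_\alpha$, shrunk if necessary so that $M_\alpha$ is a local system. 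Semisimplicity follows from polarizability, and $M_\alpha$ may be zero.

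\emph{Step 4: summing.} Cohomology is additive over the decomposition, so summing the contributions of Steps~2 and~3 gives \eqref{eq:strict-support-general}.

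\emph{Main obstacle.} The delicate point is Step~2: we need $\mathcal J$ to restrict to $\Q^H[n]$, of weight exactly $n$, over the smooth locus of $Y_0$. This requires that $g$ be smooth at the generic points of $Y_0$, i.e.\ that $Y_0$ be reduced with $Y$ smooth there. That holds here because $Y_0$ is normal, so it is reduced and $g$ is submersive at its smooth points. Without this, the full-support part could carry a nontrivial local system or a different weight.
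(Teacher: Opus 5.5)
Your proposal is correct and follows essentially the same route as the paper: you decompose the pure Hodge module $\Gr_n^W\mathcal J$ by strict support, identify the unique full-support factor with $\IC^H_{Y_0}$ because $g$ is smooth near the regular locus of $Y_0$ (so $\mathcal J$ is $\Q^H[n]$ of weight $n$ there), and then use the bound $2-n\ge -s$ for summands in $D^{\ge -s}$ together with \eqref{eq:IC-lowest} to show that only codimension-two supports contribute, each through $H^0(S_\alpha,M_\alpha)$. Your remark that $S_\alpha$ may need to be shrunk so that the coefficient variation is a genuine local system on it is a harmless refinement that the paper leaves implicit, and it is consistent with the later deletions in Lemma~\ref{lem:subquotient}.
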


\begin{proof}
The pure Hodge module $\Gr_n^W\mathcal J$ decomposes by strict support \cite[Section~1.2 and Theorem~1.3]{SaitoGuide}; its factors are intermediate extensions of polarizable variations \cite[Theorem~3.21]{Saito1990}. On a neighborhood of the regular locus of $Y_0$, the family is smooth and $\mathcal J$ is the constant Hodge module $\Q[n]$, pure of weight $n$. Therefore the unique full-support strict factor of $\Gr_n^W\mathcal J$ is $\IC_{Y_0}^H$, occurring once. Every other strict support is contained in $\operatorname{Sing}Y_0$, hence has dimension at most $n-2$ because $Y_0$ is normal.

A perverse object supported in dimension $s$ belongs to the ordinary truncation category $D^{\ge-s}$. It can contribute to cohomological degree $2-n$ only if
\[
 2-n\ge-s,
\]
that is, $s\ge n-2$. Thus among proper supports only those of dimension exactly $n-2$ can occur. On such a support the lowest cohomology of the intermediate extension equals the global sections of the underlying local system. This gives \eqref{eq:strict-support-general}.
\end{proof}

\section{Codimension-two ADE systems and Weyl-twisted monodromy}\label{sec:ADE}

Throughout this section, $g\colon Y\to\Delta$ satisfies the hypotheses
of Theorem~\ref{thm:nearby-general}. By a stratum $S_\alpha$ we always mean
a connected dense Zariski open subset of an irreducible
codimension-two component of $\operatorname{Sing}Y_0$.
Replacing $S_\alpha$ by a smaller stratum means deleting a proper
closed algebraic subset. Small simply connected analytic charts
will be used to construct markings, but will never replace
$S_\alpha$ in any monodromy group or invariant space.
In particular, all global sections and monodromy orbits below are
computed on the entire chosen dense open stratum.

If $S'\subset S_\alpha$ is a further dense Zariski open restriction,
the map $\pi_1(S')\to\pi_1(S_\alpha)$ is surjective: loops can be
moved off a proper complex algebraic subset. Thus the monodromy
image and the invariant space of an already defined local system are
unchanged by such a restriction.

\begin{lemma}\label{lem:ADE-transverse-nearby}
The strata $S_\alpha$ can be chosen so that their transverse Du Val
type is constant and the following holds. Put $d=n-2$. At every
$p\in S_\alpha$, choose sufficiently general local holomorphic
functions $u=(u_1,\ldots,u_d)$ whose restriction to $S_\alpha$ is a
coordinate system near $p$. There is a neighborhood of $p$ on which
\[
 (u,g)\colon Y\longrightarrow B\times\Delta
\]
is a flat family of surface germs, with $B\subset\mathbb C^d$ a
small polydisc. The central transverse surface
\[
 T_p=Y_0\cap\{u=u(p)\}
\]
has a Du Val singularity at $p$, and the transverse fibers with
$t\ne0$ are smooth after making the neighborhood and disc smaller.
If $F_p$ is a transverse Milnor fiber, restriction induces
\begin{equation}\label{eq:ADE-transverse-stalk}
 \mathcal H^{2-n}(\rat\mathcal J)_p
 \simeq H^2(F_p,\mathbb Q).
\end{equation}
These identifications commute with continuation along
$S_\alpha$ and with the nearby-cycle monodromy in $t$.
Locally along $S_\alpha$, the family of central surface germs is
analytically trivial.
\end{lemma}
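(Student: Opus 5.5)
The plan is to first pin down the singularities of $Y_0$ in codimension two, then reduce to a one-parameter family of surface germs by slicing, and finally read off the stalk of nearby cycles from a transverse Milnor fibre.

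\emph{Du Val type.} By Proposition~\ref{prop:relative-model}, $Y_0$ is the canonical model of $X_0$. So it has canonical Gorenstein-in-codimension-two singularities. A general complete-intersection surface slice through a general point of a codimension-two component of $\operatorname{Sing}Y_0$ has canonical, hence Du Val, singularities; this is Reid's general hyperplane section argument for canonical singularities. The transverse type is upper semicontinuous and takes finitely many values. Deleting a proper Zariski closed subset therefore makes it constant on $S_\alpha$. After a further shrinking of $S_\alpha$ (generic equisingularity of the Whitney stratification), the central family is locally analytically trivial along $S_\alpha$. For ADE germs this also follows from their simplicity and the finite determinacy of a family with constant Milnor number.

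\emph{Slicing the total space.} Next, extend $u$ from coordinates on $S_\alpha$ to general functions on the ambient local embedding of $Y$. Since $Y_0$ is locally trivial along $S_\alpha$, the slice $(Y_0\cap\{u=c\})$ is a normal surface germ for $c$ near $u(p)$. It has pure dimension two, so $(u,g)$ has equidimensional fibers. Both $Y$ (normal, hence Cohen--Macaulay near a general point of a codimension-two-in-fiber stratum after shrinking) and the base are smooth or CM, so miracle flatness gives flatness of $(u,g)$.

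For $t\ne0$, $Y_t\simeq X_t$ is smooth. A general slice of a smooth variety by $d$ general functions is smooth near $p$ (Bertini/Sard for general linear combinations), so the transverse fibers with $t\ne0$ are smooth after shrinking. Flatness then presents $\{u=u(p)\}\cap Y\to\Delta$ as a one-parameter smoothing of the Du Val germ $T_p$.

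\emph{The stalk identification.} For \eqref{eq:ADE-transverse-stalk}, I will use that nearby cycles commute with restriction to a transversal slice that is non-characteristic. Generic $u$ is transverse to the strata of a Whitney stratification of $Y$ refining $Y_0$, and a transversal slice by $d$ functions shifts perverse degrees by $d$. Hence
\[
\mathcal H^{2-n}(R\psi_g\Q_Y[n])_p=\mathcal H^{0}(R\psi_{g|_{T}}\Q_T[2])_p[\ldots],
\]
and this equals $H^2$ of the Milnor fiber of $g|_{\{u=u(p)\}}$ at $p$, which is $H^2(F_p,\Q)$. Compatibility with continuation along $S_\alpha$ comes from topological local triviality of the whole pair $(Y,g)$ along $S_\alpha$ (Thom--Mather first isotopy lemma after shrinking). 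Compatibility with $t$-monodromy holds because the monodromy is defined on the same Milnor fibration in both descriptions.

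\emph{Main obstacle.} I expect the hardest step to be the non-characteristic restriction of nearby cycles, that is, guaranteeing that the slice is transverse to the relevant stratification of $g$ and not just of $Y_0$. First I would use a Thom $a_g$ stratification, which exists after shrinking since $g$ has an isolated critical value. Then I would choose $u$ generic relative to it and invoke the standard base-change property of $\psi$ under transversal restriction. Flatness of $(u,g)$ is routine by comparison.
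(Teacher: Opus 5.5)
Your outline follows the same route as the paper: Bertini plus adjunction for Du Val slices, miracle flatness for $(u,g)$, non-characteristic restriction of $R\psi_g$ via a Thom $a_g$ stratification, and rigidity of ADE germs. However, two steps rest on arguments that do not work as written.

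\emph{Smoothness of the transverse fibres for $t\ne0$.} This is not a Bertini/Sard statement. The point $p$ does not lie on $Y_t$, and what is needed is a uniform claim: a neighbourhood $\mathcal U$ of $p$ and $\varepsilon>0$ such that $du|_{T_yY_t}$ is onto for all $y\in\mathcal U\cap Y_t$ and all $0<|t|<\varepsilon$. Sard or Bertini on one smooth $Y_t$ only controls general values $c$, or a choice of $u$ depending on $t$. It cannot exclude critical points of $u|_{Y_t}$ accumulating on $S_\alpha$ as $t\to0$. Equivalently, it cannot exclude a component of the discriminant of $(u,g)$, other than $B\times\{0\}$, passing through $(u(p),0)$. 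The paper remarks that smoothness of $Y_t$ alone does not suffice, and derives the claim from the $a_g$ condition. If $y_j\in Y_{t_j}\to p'\in S_\alpha$ and $T_{y_j}Y_{t_j}\to\tau$, then $T_{p'}S_\alpha\subset\tau$. Unit covectors $\lambda_j$ annihilating $du(T_{y_j}Y_{t_j})$ then have a limit $\lambda\ne0$ with $\lambda\circ du|_{T_{p'}S_\alpha}=0$, contradicting invertibility of $du|_{TS_\alpha}$. You bring in $a_g$ only in your last paragraph, for restricting nearby cycles; this step needs it as well. It is not a side issue: this uniform submersion property is what makes the closed Milnor tube a proper submersion, with boundary, over the $u$-polydisc. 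Microlocally, it is what makes the slice non-characteristic, so the stalk identification depends on it.

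\emph{Cohen--Macaulayness of $Y$.} ``Normal, hence Cohen--Macaulay'' fails where you need it. Normality gives $S_2$, hence CM only in codimension at most two of $Y$, while $S_\alpha$ has codimension three in $Y$; shrinking $S_\alpha$ does not help. The correct input is:
\begin{itemize}
\item $Y_0$ is canonical, hence rational, hence CM;
\item $t$ is a nonzerodivisor on $\mathcal O_{Y,p}$ by flatness of $g$, so $\mathcal O_{Y,p}$ is CM;
\item hence $t,u_1-u_1(p),\dots,u_d-u_d(p)$ is a regular sequence, and miracle flatness applies.
\end{itemize}

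\emph{Smaller points.}
\begin{itemize}
\item Generic Whitney equisingularity gives only topological triviality of the central family. Analytic triviality comes from the rigidity of simple singularities. The paper phrases this via the classifying map $\varphi\colon B\times\Delta\to\mathfrak h/W$: only the origin carries the full ADE type, so $\varphi(B\times\{0\})=0$. Your ``constant Milnor number'' version also needs the observation that $Y_0$ is locally a hypersurface family, since the Du Val slice has embedding dimension three.
\item Topological triviality of the pair $(Y,g)$ along $S_\alpha$ requires Thom's second isotopy lemma for the $a_g$-stratified map, not the first. The paper avoids this by noting that all the identifications are restriction maps between cofinal tubes.
\end{itemize}
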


\begin{proof}
Choose a complex analytic Whitney stratification near $Y_0$,
compatible with $g^{-1}(0)$ and satisfying Thom's $a_g$ condition.
The existence theorem applies to a holomorphic map to a curve;
see \cite{Hironaka1977Stratification} and the discussion of the
analytic-domain case in \cite[Introduction]{Massey2007Thom}.
It can be refined to be compatible with the singular locus and with
a fixed resolution of $Y_0$. Its restriction to the compact
projective variety $Y_0$ is finite, and the closures of the strata are
closed analytic subsets of $Y_0$, hence algebraic. Thus the
top-dimensional stratum in each relevant singular component is a
dense Zariski open subset, and we delete the remaining proper closed
subsets whenever necessary below. This operation retains the
global stratum; it is distinct from passing to a coordinate chart.

For a fixed resolution of $Y_0$, general transverse hyperplane
sections have normal canonical surface germs by Bertini for
discrepancies and adjunction
\cite[Lemma~5.17(1), Corollary~5.18]{KollarMori1998}.
Indeed, the hyperplanes can be chosen not to contain any of the
finitely many exceptional centers, so their total transforms have
no exceptional divisorial parts. On their smooth complete
intersection in the resolution, adjunction restricts the
nonnegative discrepancies of $Y_0$. A normal canonical surface
singularity is Du Val \cite[Theorem~4.20]{KollarMori1998}.
The ADE type is constant after the indicated dense open restriction.

We next verify transversality to the nearby fibers. This does not
follow merely from their smoothness. Extend $u$ to an ambient
smooth embedding near $p$. Thom's $a_g$ condition says that if
$y_j\in Y_{t_j}$, $t_j\ne0$, tends to a point $p'\in S_\alpha$,
and the planes $T_{y_j}Y_{t_j}$ tend to a plane $\tau$, then
\[
 T_{p'}S_\alpha\subset\tau.
\]
Suppose that $du|_{T_yY_t}$ failed to be surjective at points
arbitrarily close to $p$. Along a convergent sequence, choose unit
covectors $\lambda_j\in(\mathbb C^d)^*$ annihilating
$du(T_{y_j}Y_{t_j})$. A subsequence has limits $\lambda\ne0$ and
$\tau$. Then $\lambda\circ du$ annihilates $\tau$, and therefore
$T_pS_\alpha$. This contradicts the invertibility of
$du|_{T_pS_\alpha}$. After shrinking the coordinate neighborhood,
$u|_{Y_t}$ is consequently a submersion for every sufficiently
small $t\ne0$, so all its transverse surface fibers there are
smooth. The same argument applies at every point of the chart.

For completeness, flatness of the surface family follows from
local commutative algebra. The canonical central fiber is
Cohen--Macaulay. Since $g$ is flat and its base is smooth of
dimension one, $\mathcal O_{Y,p}$ is Cohen--Macaulay as well.
The sequence
\[
 t,u_1-u_1(p),\ldots,u_d-u_d(p)
\]
is a partial system of parameters: its quotient is the
two-dimensional transverse surface ring. It is therefore a regular
sequence. The local flatness criterion over the regular local ring
of $B\times\Delta$, equivalently miracle flatness, gives flatness
of $(u,g)$ near $p$.

Take a small
normal Milnor ball for $T_p$ in the slice $u=u(p)$, with boundary
transverse to its smooth locus. Keep its radius fixed and restrict
$u$ to a smaller polydisc. At the boundary the central transverse
fibers are smooth; flatness then makes $(u,g)$ smooth there, and
the boundary transversality persists for small $u-u(p)$ and $t$.
In the interior, the submersion assertion just proved supplies
transversality for $t\ne0$. Thus, for each sufficiently small
$t\ne0$, the part of $Y_t$ in this closed normal tube is a proper
submersion, including along its boundary, over the $u$-polydisc.
Ehresmann's theorem for manifolds with boundary makes it a fiber
bundle over that contractible polydisc, and the inclusion of the fiber
over $u(p)$ is a cohomology equivalence. These normal tubes give
the usual cofinal Milnor representatives defining the stalk of
$R\psi_g\mathbb Q_Y$. Consequently
\[
 (R^k\psi_g\mathbb Q_Y)_p\simeq H^k(F_p,\mathbb Q).
\]
With $\rat\mathcal J=R\psi_g\mathbb Q_Y[n]$, this gives
\eqref{eq:ADE-transverse-stalk}. The maps used here are restriction
maps, so changing tubes or coordinates through transverse choices
preserves the identifications and their continuation. Equivalently,
this is non-characteristic restriction for nearby cycles:
Thom's condition places their microsupport, above the stratum,
in its conormal bundle
\cite[Theorem~3.7 and Corollary~3.8]{Massey2007Thom}, and
invertibility of $du|_{TS_\alpha}$ makes the normal slice
non-characteristic. The construction can be performed after
pullback to the universal cover of $\Delta^*$, which also proves
compatibility with the $t$-monodromy.

Finally, a flat deformation of a Du Val surface germ is obtained
from its semiuniversal deformation, whose base is
$\mathfrak h/W$ \cite[Sections~8.7 and~8.9]{Slodowy1980}.
Apply this to $(u,g)$ near $(u(p),0)$, obtaining a classifying map
\[
 \varphi\colon B\times\Delta\longrightarrow\mathfrak h/W.
\]
In the semiuniversal deformation of a simple ADE singularity, the
stratum retaining the same full ADE type is the origin: away from the
origin, the discriminant strata correspond to proper root subsystems. Since all central transverse germs in $B$ have
the original type, $\varphi(B\times\{0\})=\{0\}$.
The pullback description therefore identifies the central family
of surface germs with the product of $B$ and the fixed Du Val
germ.
\end{proof}

For a local example in dimension three, consider
\[
 V=\{xy-z^2=0\}\subset\C^4_{x,y,z,s}.
\]
Its singular locus is the curve $S=\{x=y=z=0\}$. At
$p=(0,0,0,s_0)$, the section $s=s_0$ is the surface
$\{xy-z^2=0\}\subset\C^3$, with an $A_1$ singularity. Here $s$ is a
coordinate along the singular stratum. The minimal resolution of the
transverse surface replaces its singular point by one $(-2)$-curve;
for a general Du Val type, the exceptional $(-2)$-curves instead form
the corresponding ADE configuration, which supplies the root lattice
used in this section.

Let $\Phi_\alpha$ be this transverse root system, let
$\Pi_\alpha$ be its simple roots, and let $V_\alpha$ be its real
root space. The link of a Du Val surface singularity is a
rational homology sphere. By the local analytic product of
Lemma~\ref{lem:ADE-transverse-nearby},
\begin{equation}\label{eq:IC-constant}
 \IC_{Y_0}\simeq\mathbb Q_{Y_0}[n]
\end{equation}
near the chosen dense strata. Define on the entire stratum
\begin{equation}\label{eq:Milnor-system}
 L_\alpha:=\mathcal H^{2-n}(\rat\mathcal J)|_{S_\alpha}.
\end{equation}
Lemma~\ref{lem:ADE-transverse-nearby} identifies its stalks with
the second cohomology of the transverse Milnor fiber. The Milnor
homology is the ADE root lattice with its negative definite
intersection pairing. Its dual is Milnor cohomology, and the
pairing identifies these spaces rationally and equivariantly.
Continuation on Milnor homology preserves the integral root
lattice and the pairing, hence preserves the roots (its vectors
of square $-2$). Thus the monodromy image is finite:
\[
 H_\alpha\subset\operatorname{Aut}(\Phi_\alpha)
 =W_\alpha\rtimes\operatorname{Aut}(\Pi_\alpha).
\]
Here the monodromy defining $H_\alpha$ runs along $S_\alpha$;
it is distinct from the monodromy around $t=0$.

\begin{lemma}\label{lem:subquotient}
For every $\alpha$, the local system $M_\alpha$ of Proposition~\ref{prop:only-codim2} is a subquotient of $L_\alpha$. Consequently
\begin{equation}\label{eq:subquotient-invariants}
 h^0(S_\alpha,M_\alpha)
 \le \dim V_{\alpha,\Q}^{H_\alpha}.
\end{equation}
\end{lemma}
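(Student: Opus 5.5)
The plan is to compare both local systems through one functor on perverse sheaves near the stratum. Shrink $S_\alpha$ as allowed in Lemma~\ref{lem:ADE-transverse-nearby}, and work on a neighborhood $N$ of $S_\alpha$ meeting only $S_\alpha$ and the smooth locus of $Y_0$. Put $d=n-2$, let $i\colon S_\alpha\hookrightarrow N$ be the inclusion, and define
\[
 \Phi(P):=\mathcal H^{-d}(i^*P)=\mathcal H^{2-n}(P)|_{S_\alpha}
\]
for perverse $P$ on $N$. Since $\mathcal H^j(i^*P)=0$ for $j>-d$, the functor $\Phi$ is the top perverse cohomology of $i^*$, up to shift. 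It is therefore right exact, and a short exact sequence $0\to A\to P\to Q\to0$ gives
\[
 \mathcal H^{-d-1}(i^*Q)\to\Phi(A)\to\Phi(P)\to\Phi(Q)\to0 .
\]
By definition $\Phi(\rat\mathcal J)=L_\alpha$. Moreover, Proposition~\ref{prop:only-codim2} writes $\rat\Gr_n^W\mathcal J$ near $S_\alpha$ as
\[
 \IC_{Y_0}\oplus \IC_{\overline{S_\alpha}}(M_\alpha),
\]
possibly with extra copies supported on $S_\alpha$ in other degrees. By \eqref{eq:IC-constant}, $\IC_{Y_0}\simeq\Q_{Y_0}[n]$ near the stratum, so $\Phi(\IC_{Y_0})=0$. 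By \eqref{eq:IC-lowest}, $\Phi(\IC_{\overline{S_\alpha}}(M_\alpha))=M_\alpha$. Hence $M_\alpha$ is a direct summand of $\Phi(\Gr_n^W\mathcal J)$.

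Next I would pass from $\Gr_n^W\mathcal J$ to $\mathcal J$ along the weight filtration. Right exactness applied to $W_n\mathcal J\to\Gr_n^W\mathcal J$ shows that $\Phi(\Gr_n^W\mathcal J)$ is a quotient of $\Phi(W_n\mathcal J)$. Applied to $0\to W_n\mathcal J\to\mathcal J\to\mathcal J/W_n\mathcal J\to0$, it shows that the image of $\Phi(W_n\mathcal J)\to L_\alpha$ is a sub-local system of $L_\alpha$. The kernel of this map is the image of $\mathcal H^{-d-1}(i^*(\mathcal J/W_n\mathcal J))$.

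The main obstacle is showing that this kernel does not reach the summand $M_\alpha$. My first attempt is a weight argument done stalkwise in Saito's theory. The graded pieces of $\mathcal J/W_n\mathcal J$ have weights $>n$, so the stalks of $\mathcal H^{-d-1}(i^*(\mathcal J/W_n\mathcal J))$ carry mixed Hodge structures whose weights differ from those of the $M_\alpha$ contribution. Alternatively, one can use the concrete description $\Phi(\rat\mathcal J)_p\simeq H^2(F_p)$ from \eqref{eq:ADE-transverse-stalk}, together with the product structure of Lemma~\ref{lem:ADE-transverse-nearby}, to compute the transverse picture directly. That would reduce the question to a one-parameter smoothing of a single Du Val germ, where the weight filtration on $H^2(F_p)$ is explicit. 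In either form, the conclusion is that $M_\alpha$ is a subquotient of $L_\alpha$.

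For the inequality \eqref{eq:subquotient-invariants}, the monodromy of $L_\alpha$ factors through the finite group $H_\alpha$. Over $\Q$ every representation of a finite group is semisimple, so the subquotient $M_\alpha$ is a direct summand of $L_\alpha$. Therefore
\[
 h^0(S_\alpha,M_\alpha)\le h^0(S_\alpha,L_\alpha)=\dim L_{\alpha,p}^{H_\alpha},
\]
where the equality uses surjectivity of $\pi_1$ under dense restriction. The stalk $L_{\alpha,p}$ is Milnor cohomology, which is dual to the root lattice tensored with $\Q$. The invariant pairing identifies it $H_\alpha$-equivariantly with $V_{\alpha,\Q}$, which gives the bound $\dim V_{\alpha,\Q}^{H_\alpha}$.
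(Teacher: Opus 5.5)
This is a genuine gap. Your framework is sound and is the one the paper uses. The functor $\Phi=\mathcal H^{2-n}(-)|_{S_\alpha}$ is right exact on perverse sheaves near the shrunk stratum, and $\Phi(\IC_{Y_0})=0$ by \eqref{eq:IC-constant}. So $M_\alpha=\Phi(\Gr_n^W\mathcal J)$ is a quotient of $\Phi(W_n\mathcal J)$, and everything hinges on the connecting term $\mathcal H^{1-n}(\mathcal J/W_n\mathcal J)|_{S_\alpha}$. But you leave exactly this step unproved, and the weight argument you sketch cannot supply it. Restriction to stalks ($i^*$) bounds weights only from above; a lower bound would require $i^!$. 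Even that optimistic bound, for an object of weight $\ge n+1$ in degree $1-n$, gives weights $\ge (n+1)+(1-n)=2$. This is exactly the weight of the stalks of $M_\alpha$, a variation of weight $n-(n-2)=2$, so weights cannot separate the kernel from $M_\alpha$. Your alternative transverse Du Val computation is only announced, not carried out.

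The missing idea is structural, not Hodge-theoretic. Since $\mathcal J$ is $\Q^H[n]$, pure of weight $n$, on $Y_{0,\mathrm{reg}}$, a full-support strict factor can occur only in $\Gr_n^W\mathcal J$. After your shrinking, $\Gr_w^W\mathcal J$ for $w\ne n$ is therefore a shifted local system $N_w[n-2]$ supported on $S_\alpha$. Near $S_\alpha$ one also has $\Gr_n^W\mathcal J\simeq\Q[n]\oplus M_\alpha[n-2]$, with $M_\alpha[n-2]$ extended by zero. Every graded piece thus has vanishing $\mathcal H^{1-n}$ (and $\mathcal H^{3-n}$) along $S_\alpha$, and so does every iterated extension of them; in particular $\mathcal H^{1-n}(\mathcal J/W_n\mathcal J)|_{S_\alpha}=0$. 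Your kernel is then zero, $\Phi(W_n\mathcal J)$ injects into $L_\alpha$, and $M_\alpha$ is a quotient of this sub-local system. The paper phrases the same point as follows: the long exact sequences in degree $2-n$ become short exact, giving a filtration of $L_\alpha$ whose graded pieces are $\mathcal H^{2-n}(\Gr_w^W\mathcal J)|_{S_\alpha}$, and the weight-$n$ piece is $M_\alpha$. Your derivation of \eqref{eq:subquotient-invariants} from semisimplicity of the finite group $H_\alpha$ is fine. A minor point: the phrase ``extra copies supported on $S_\alpha$ in other degrees'' does not make sense, since near the shrunk stratum any perverse factor with strict support $S_\alpha$ is a local system placed in degree $2-n$.
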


\begin{proof}
Delete from $S_\alpha$ its intersections with lower-dimensional strict supports and with other codimension-two supports. On the resulting open stratum, every weight-graded piece of $\mathcal J$ has only two possible kinds of strict factors: the full-support factor, which occurs only in weight $n$, and a local system supported on $S_\alpha$ shifted by $[n-2]$.

By \eqref{eq:IC-constant}, the full-support factor has ordinary sheaf cohomology only in degree $-n$, while an $S_\alpha$-supported perverse local system has ordinary sheaf cohomology only in degree $2-n$. In particular the adjacent degrees $1-n$ and $3-n$ vanish for every graded piece.

Apply ordinary sheaf cohomology in degree $2-n$ to
\[
 0\to W_{w-1}\mathcal J\to W_w\mathcal J\to\Gr_w^W\mathcal J\to0.
\]
The adjacent vanishings show inductively that the groups
\[
 \mathcal H^{2-n}(W_w\mathcal J)|_{S_\alpha}
\]
form a filtration of the full local system
\[
 \mathcal H^{2-n}(\mathcal J)|_{S_\alpha}=L_\alpha,
\]
and that the weight-$n$ graded quotient is precisely $M_\alpha$. Thus $M_\alpha$ is a subquotient of $L_\alpha$.

Because $H_\alpha$ is finite and we work over $\Q$, its representation category is semisimple and taking invariants is exact. Hence the invariant dimension of a subquotient cannot exceed that of the original representation, which proves \eqref{eq:subquotient-invariants}.
\end{proof}

The minimal resolution of the transverse Du Val singularity carries the simple roots as effective $(-2)$-curves. Continuing these along $S_\alpha$ gives a Dynkin-diagram monodromy group
\[
 \Gamma_\alpha\subset\Aut(\Pi_\alpha).
\]
The Milnor monodromy $H_\alpha$ may contain Picard--Lefschetz reflections and therefore need not preserve the chosen chamber. What is true is the following comparison modulo the Weyl group.

\begin{lemma}\label{lem:ADE-outer-naturality}
Let $\sigma\colon(T,0)\to(T',0)$ be an analytic isomorphism of
Du Val surface germs of the same type. It lifts uniquely to an
isomorphism of their minimal resolutions and hence induces a
permutation $\sigma_\Pi$ of the effective simple roots. Compare
the root lattices of smooth deformation fibers by an isomorphism
of deformations extending $\sigma$ and by continuation in the
smooth deformation loci. With any simultaneous-resolution
markings, the resulting root-lattice isometry has image
$\sigma_\Pi$ in $\operatorname{Aut}(\Phi)/W$. This image is
independent of the choices of markings and continuation paths.
\end{lemma}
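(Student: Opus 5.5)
We would prove Lemma~\ref{lem:ADE-outer-naturality} through Brieskorn--Slodowy simultaneous resolution. The Weyl group $W$ is exactly the ambiguity of such markings, and the effective simple roots are recovered from the central exceptional curves.

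\emph{Lifting $\sigma$.} The minimal resolution of a normal surface germ is unique up to unique isomorphism: it is characterized as the resolution with no $(-1)$-curves in the exceptional fiber. Hence $\sigma$ lifts uniquely to $\tilde\sigma\colon\widetilde T\to\widetilde T'$. This lift carries exceptional $(-2)$-curves to exceptional $(-2)$-curves and preserves intersection numbers. Identifying the curve classes with the effective simple roots, we obtain $\sigma_\Pi\in\Aut(\Pi)$, which is a diagram automorphism once both sides are identified with the fixed Dynkin diagram of the given type.

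\emph{Reduction to semiuniversal deformations.} By versality (the result of Slodowy already cited in Lemma~\ref{lem:ADE-transverse-nearby}), any isomorphism of deformations extending $\sigma$ is compatible with the classifying maps to $\mathfrak h/W$. Up to homotopy through smooth fibers, it is therefore induced by an automorphism of the semiuniversal deformation covering $\sigma$. Over the cover $\mathfrak h\to\mathfrak h/W$ there is a simultaneous resolution $\widetilde{\mathcal T}\to\mathfrak h$ whose central fiber is the minimal resolution. Since $\widetilde{\mathcal T}$ retracts onto its central fiber, this gives an identification
\[
 H_2(\text{smooth fiber over }h)\simeq H_2(\widetilde T)\simeq Q(\Phi)
\]
for every $h$ in the complement $\mathfrak h^{\mathrm{reg}}$ of the root hyperplanes. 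This identification is canonical on $\mathfrak h^{\mathrm{reg}}$, and it sends the central exceptional curves to $\Pi$. Passing from one marking to another, that is, replacing one simultaneous resolution by its $w$-translate or choosing another lift of a point of $\mathfrak h^{\mathrm{reg}}/W$, changes the identification by an element $w\in W$. The reason is that $\pi_1(\mathfrak h^{\mathrm{reg}}/W)$, the braid group, acts on the lattice through its quotient $W$ by Picard--Lefschetz reflections. Continuation along any path in the smooth deformation locus is likewise a composite of braid generators, hence lies in $W$. Consequently every choice of markings and paths alters the root-lattice isometry only by left and right multiplication by elements of $W$. Since $W$ is normal in $\Aut(\Phi)$, the image in $\Aut(\Phi)/W$ is well defined.

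\emph{Identifying the class.} We compute with the specific choice in which $\sigma$ is lifted to the simultaneous resolution. The induced automorphism of $\widetilde{\mathcal T}$ restricts on the central fiber to $\tilde\sigma$, because the lift of $\sigma$ to minimal resolutions is unique. Through the retraction, the lattice isometry is therefore exactly $\sigma_\Pi$ acting on $\Pi$.

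The main obstacle is ensuring that an arbitrary isomorphism of deformations extending $\sigma$, which need not lift to the chosen simultaneous resolutions, still differs from this lifted one only by an element of $W$. Our first approach is as follows. The induced map on $\mathfrak h/W$ lifts to $\mathfrak h$ after composing with some $w$, by covering-space theory on $\mathfrak h^{\mathrm{reg}}$. Any two lifts differ by the deck group $W$. Simultaneous resolution is functorial for the lifted map up to the central-fiber identification, which is already pinned down by the uniqueness of $\tilde\sigma$.
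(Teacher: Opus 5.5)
Your outline has the right ingredients: the $W$-ambiguity of simultaneous-resolution markings, normality of $W$ in $\Aut(\Phi)$, and the central computation through the unique lift $\tilde\sigma$. But the step you call the main obstacle is not proved, and the sketch you give for it fails as written.

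\emph{The reduction to the semiuniversal family is unjustified.} Versality supplies classifying maps but not uniqueness. So nothing says that an isomorphism $\Psi$ of two arbitrary deformations extending $\sigma$ is, ``up to homotopy through smooth fibers,'' induced by an automorphism of the semiuniversal deformation. That assertion is essentially the claim to be proved.

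\emph{Composing with some $w$ cannot create a lift,} since $W$ acts trivially on $\mathfrak h/W$. A lift exists only after checking that the base map preserves $\pi_1(\mathfrak h^{\mathrm{reg}})\subset\pi_1(\mathfrak h^{\mathrm{reg}}/W)$. This is true, because $\Psi$ conjugates the monodromy representation, whose kernel is the pure braid group, but you do not argue it.

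\emph{Simultaneous resolution is not functorial for an arbitrary lift.} For $A_1$, take the semiuniversal family $xy=z^2+s$ and $s=t^2$. The pulled-back family $xy=(z+it)(z-it)$ has two small resolutions, obtained by blowing up $(x,z+it)$ or $(x,z-it)$. The lift $t\mapsto -t$ of the identity of $\mathfrak h/W$ interchanges them, so a lifted automorphism need not lift to your fixed $\widetilde{\mathcal T}$. Knowing that a suitable lift exists amounts to the representability of simultaneous resolutions by $\mathfrak h\to\mathfrak h/W$, which you never invoke.

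The missing idea, which the paper uses, is to stop trying to lift $\Psi$ to a resolution of the source chosen in advance. Instead, pull a simultaneous resolution of the target family back through $\Psi$. This is a simultaneous resolution of the source family. By uniqueness of lifts to minimal resolutions, its central identification with the target resolution is $\tilde\sigma$. For these tautologically compatible markings, parallel transport shows that the smooth-fiber isometry is $\sigma_\Pi$, and this holds for \emph{every} isomorphism of deformations extending $\sigma$.

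What remains is your second step: any other simultaneous-resolution marking, or further continuation in the smooth locus, changes the isometry by an element of $W$. This $W$-torsor property must hold for all simultaneous-resolution markings, including the pulled-back one, not only those obtained by changing lifts of your fixed $\widetilde{\mathcal T}$. The paper takes it from the marked-resolution theory of Slodowy and Shepherd-Barron. With it, your obstacle never arises.
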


\begin{proof}
Fix the effective simple-root bases on the two central minimal
resolutions; all central lattice identifications in this proof
refer to these bases.
Write $\mathscr T\to D=\mathfrak h/W$ for a semiuniversal
deformation of $(T,0)$ and $D^{\rm reg}$ for its smooth-fiber
locus. The simultaneous-resolution theorem supplies a smooth
family resolving its pullback to $\mathfrak h$, whose central
fiber is the minimal resolution of $T$. Over $D^{\rm reg}$,
$\mathfrak h^{\rm reg}\to D^{\rm reg}$ is a $W$-cover, and the
monodromy of the vanishing root lattice is exactly $W$.
These statements are the marked simultaneous-resolution and
monodromy theorems
\cite[Sections~8.7 and~8.9]{Slodowy1980},
\cite[Section~2 and Theorem~2.11(1),(2),(4)]{ShepherdBarron2021}; the
monodromy description is also recalled in
\cite[Remark~1.3(ii)]{KerrLazaSaito2022}.

The marked-resolution functors and their change-of-marking
action are described in Section~2 of the latter reference:
the full marking group is $\operatorname{Aut}(\Phi)$, and
the chamber stabilizer is $\operatorname{Aut}(\Pi)$.
Their quotient by the Weyl subgroup records the action on
simple roots after adjustment to the chosen chamber.
Consequently, the comparisons of a smooth-fiber root lattice with
the central resolution lattice furnished by lifts to the
simultaneous-resolution base form a $W$-torsor. On a sufficiently
small simply connected lift, this comparison can be realized by
parallel transport in the smooth family of resolved Milnor
representatives. The central resolved representative retracts
onto its exceptional curves, so its second homology is their
root lattice. Changing a lift, or making a loop in
$D^{\rm reg}$, changes the comparison by a Weyl element.

Now consider an isomorphism of deformation families extending
$\sigma$. Pull a chosen simultaneous resolution of the target
family back through this isomorphism. This is a simultaneous
resolution of the source family, and its central identification with
the target resolution is the unique lift of $\sigma$ to minimal
resolutions. This lift takes every irreducible exceptional curve
to the corresponding irreducible exceptional curve, so its action
on the central root basis is $\sigma_\Pi$. Parallel transport in
the pulled-back smooth resolved family shows that, for these
compatible markings, the action on the smooth-fiber lattice is
also $\sigma_\Pi$. Replacing the pulled-back marking by any
other marking multiplies this isometry by a Weyl element, and
additional continuation in the smooth deformation locus does
the same. This proves the claimed identity modulo $W$.

\end{proof}

\begin{lemma}\label{lem:mod-Weyl}
Under the natural identification
\[
 \Aut(\Phi_\alpha)/W_\alpha\simeq\Aut(\Pi_\alpha),
\]
the image of $H_\alpha$ is $\Gamma_\alpha$, up to conjugacy.
\end{lemma}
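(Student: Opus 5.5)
The plan is to compute the monodromy of $L_\alpha$ along a loop $\gamma$ in $S_\alpha$ by covering $\gamma$ with finitely many small simply connected charts, and to apply Lemma~\ref{lem:ADE-outer-naturality} on each overlap. First fix a base point $p_0\in S_\alpha$ and identify the stalk of $L_\alpha$ at $p_0$ with the root lattice $\Z\Phi_\alpha\otimes\Q$. This uses \eqref{eq:ADE-transverse-stalk}, applied to a transverse Milnor fiber $F_{p_0}$ together with a chosen simultaneous-resolution marking. Under this identification the effective simple roots of the minimal resolution of $T_{p_0}$ form the chamber $\Pi_\alpha$.

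Next cover $\gamma$ by charts $U_0,\dots,U_k$ as in Lemma~\ref{lem:ADE-transverse-nearby}. On each $U_i$ the central family of transverse germs is analytically trivial, so there is a canonical analytic identification $\sigma_{i}$ between germs at nearby points. The family $(u,g)$ on $U_i$ is a deformation of this germ, so continuation of $L_\alpha$ inside $U_i$ is continuation in a smooth deformation locus that extends $\sigma_i$. Lemma~\ref{lem:ADE-outer-naturality} then shows that, modulo $W_\alpha$, this continuation acts on roots as the permutation $(\sigma_i)_\Pi$ of exceptional curves. By the uniqueness of lifts to minimal resolutions, $(\sigma_i)_\Pi$ is exactly the continuation of the effective $(-2)$-curves inside $U_i$. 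On an overlap $U_i\cap U_{i+1}$ the two trivializations differ by an analytic isomorphism of germs, and the same lemma makes the comparison consistent modulo $W_\alpha$, independently of the chosen markings and paths. Composing the contributions along $\gamma$ shows that the image of the Milnor monodromy of $\gamma$ in $\Aut(\Phi_\alpha)/W_\alpha$ equals the Dynkin-diagram monodromy of $\gamma$ in $\Gamma_\alpha$. The only ambiguity is the initial choice of marking at $p_0$, which changes the result by conjugation. This gives equality of images, up to conjugacy, because both groups are images of the same group $\pi_1(S_\alpha,p_0)$ under homomorphisms that agree modulo $W_\alpha$.

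The identification $\Aut(\Phi_\alpha)/W_\alpha\simeq\Aut(\Pi_\alpha)$ is the standard splitting: $W_\alpha$ acts simply transitively on chambers, and $\Aut(\Pi_\alpha)$ is the stabilizer of $\Pi_\alpha$. The main obstacle is to make sure that continuation of $L_\alpha$ inside a chart really is continuation in a deformation of the germ that extends the trivializing isomorphism $\sigma_i$. Here the deformation parameter is $(u,t)$ with $t$ small and nonzero, and it is not a family over a fixed germ. I would handle this with the classifying map $\varphi\colon B\times\Delta\to\mathfrak h/W$ of Lemma~\ref{lem:ADE-transverse-nearby}. Pulling back the semiuniversal family by $\varphi$ identifies the transverse family over $B\times\Delta$ with a family over a single germ, provided the central product structure is compatible with the semiuniversal one; this compatibility holds up to the non-uniqueness of semiuniversal maps. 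That non-uniqueness is absorbed by $W$, since any two maps differ by automorphisms that act on the smooth-fiber lattice through $W$ times the induced diagram automorphism. Finally, by the surjectivity $\pi_1(S')\to\pi_1(S_\alpha)$ recorded at the start of the section, loops may be taken in the Zariski-dense stratum where all these constructions are valid.
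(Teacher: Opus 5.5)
Your strategy is the paper's: charts from Lemma~\ref{lem:ADE-transverse-nearby}, markings from the classifying map $\varphi$ and lifts to the simultaneous-resolution base, Lemma~\ref{lem:ADE-outer-naturality} on overlaps, and composition along a loop so that Weyl factors disappear in $\Aut(\Phi_\alpha)/W_\alpha$. Your handling of continuation inside one chart via $\varphi$, and of the base-point ambiguity as conjugation, is fine.

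The gap is the overlap step when the two charts use different transverse functions. Each chart of Lemma~\ref{lem:ADE-transverse-nearby} carries its own local $u_i$. Along a loop you cannot in general work with a single slicing, so on $U_i\cap U_{i+1}$ typically $u_i\ne u_{i+1}$. At a point $p$ of the overlap, the Milnor fibers $F_p$ and $F'_p$ then belong to two different slice families, $Y\cap\{u_i=u_i(p)\}$ and $Y\cap\{u_{i+1}=u_{i+1}(p)\}$, which are different subsets of $Y_t$. Their central germs are isomorphic. But nothing supplies an isomorphism of the two deformations extending $\sigma$ that is compatible with the identification $H^2(F_p,\Q)\simeq H^2(F'_p,\Q)$ actually used by $L_\alpha$, namely the one through the stalk via the two restriction isomorphisms \eqref{eq:ADE-transverse-stalk}. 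Lemma~\ref{lem:ADE-outer-naturality} presupposes an isomorphism of deformation families extending $\sigma$. Its independence of markings and paths refers only to continuation inside the smooth locus of \emph{one} deformation. So your sentence that ``the same lemma makes the comparison consistent modulo $W_\alpha$'' does not cover this case.

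The paper closes this with an interpolation of slicings. After a holomorphic change of coordinates in the target of $u_{i+1}$, one may assume $u_{i+1}|_{S_\alpha}=u_i|_{S_\alpha}$ on a smaller overlap. Then $u_\lambda=(1-\lambda)u_i+\lambda u_{i+1}$, $0\le\lambda\le1$, satisfies $du_\lambda|_{TS_\alpha}=du_i|_{TS_\alpha}$, which is invertible. Hence:
\begin{itemize}
\item the $a_g$ tangent-space argument and the normal-tube construction of Lemma~\ref{lem:ADE-transverse-nearby} apply uniformly on the compact interval;
\item each central slice germ is identified with the same Du Val germ;
\item the restriction maps from the stalk vary continuously in $\lambda$.
\end{itemize}
The path of slices thus realizes the stalk comparison as continuation among smooth deformations of one fixed ADE germ. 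Its ambiguity is a loop in the smooth semiuniversal locus, and so it acts through $W_\alpha$. With this step added, your equal-projection argument applies to every transition, and the conclusion follows as you describe.
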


\begin{proof}
Cover the entire dense stratum $S_\alpha$ by small simply
connected coordinate charts. Lemma~\ref{lem:ADE-transverse-nearby}
provides, on each chart, a flat family of transverse surface germs
and an analytic identification of its central family with the
fixed ADE germ times the chart. The minimal resolutions of those
central germs identify their effective exceptional curves, and
their transitions define the diagram local system with monodromy
$\Gamma_\alpha$.

On each chart, use a classifying map to $\mathfrak h/W$ and choose
lifts to the simultaneous-resolution base over simply connected
sectors of $\Delta^*$. These give markings of the transverse
Milnor local system; different sectorwise lifts change the markings
by Weyl elements.

We check the transitions rather than identifying the two full
representations. At a point of an overlap, the two central
identifications differ by an analytic isomorphism $\sigma$ of
the fixed Du Val germ. Its action on the exceptional curves is
the transition of the diagram local system. When the transverse
projections agree, Lemma~\ref{lem:ADE-outer-naturality} shows
that the corresponding Milnor transition has exactly this image
modulo $W_\alpha$. If the projections differ, call them $u$ and
$u'$. Make a holomorphic change of coordinates in the target of
$u'$ so that $u'|_{S_\alpha}=u|_{S_\alpha}$ on a smaller overlap,
and set
\[
 u_\lambda=(1-\lambda)u+\lambda u',\qquad 0\le\lambda\le1.
\]
Every $du_\lambda|_{TS_\alpha}$ equals $du|_{TS_\alpha}$
and is invertible. In a central analytic product chart, the
implicit function theorem identifies each central transverse
germ for $u_\lambda$ with the same Du Val germ. The tangent-space
argument and the normal-tube construction of
Lemma~\ref{lem:ADE-transverse-nearby} apply uniformly in the
compact parameter interval, after reducing the neighborhood.
Thus this path of slices supplies continuation among smooth
deformations of the fixed ADE germ, and its ambiguity is a loop in
the smooth semiuniversal deformation locus, whose action lies in
$W_\alpha$. The same conclusion modulo $W_\alpha$ therefore
holds for different projections and different Milnor
representatives.

Compose these transitions along a loop in $S_\alpha$. Weyl
elements disappear in the quotient, leaving precisely the
transitions of the diagram local system. Thus the projection of
the Milnor monodromy representation to
$\operatorname{Aut}(\Phi_\alpha)/W_\alpha$ is the diagram
monodromy representation, up to the initial marking, and taking
images gives the assertion.
\end{proof}

\begin{lemma}\label{lem:root-inequality}
Let $\Phi$ be a finite reduced root system with real root space $V$, Weyl group $W$, and simple roots $\Pi$. Let $H\subset\Aut(\Phi)$, and let $\Gamma\subset\Aut(\Pi)$ be its image modulo $W$. Then
\begin{equation}\label{eq:root-inequality}
 \dim V^H\le\dim V^\Gamma=|\Pi/\Gamma|.
\end{equation}
\end{lemma}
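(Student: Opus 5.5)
The plan is to prove the equality $\dim V^\Gamma=|\Pi/\Gamma|$ directly, and then to prove the inequality by moving a generic $H$-fixed vector into the closed fundamental chamber. Moving the vector lets us compare each $h\in H$ with its diagram component $\gamma_h\in\Gamma$.

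\emph{The equality.} Since $\Pi$ is a basis of $V$ and $\Gamma$ permutes this basis, $V$ is the permutation representation of $\Gamma$ on $\Pi$. Its invariants are spanned by the orbit sums $\sum_{\beta\in O}\beta$, one for each orbit $O\in\Pi/\Gamma$. Hence $\dim V^\Gamma=|\Pi/\Gamma|$.

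\emph{Reduction.} Put $U=V^H$ and let $C$ be the open fundamental chamber of $\Pi$. Choose $x\in U$ outside every root hyperplane that does not contain $U$; there are finitely many such hyperplanes, so such $x$ exists. Then every root orthogonal to $x$ is orthogonal to all of $U$. Some $w\in W$ carries $x$ into $\overline C$. Replace $H$ by $wHw^{-1}$ and $U$ by $wU$. This preserves $\dim U$. It also preserves the image modulo $W$: the semidirect product decomposition $\operatorname{Aut}(\Phi)=W\rtimes\operatorname{Aut}(\Pi)$ has $W$ normal, so conjugation by $w$ induces the identity on $\operatorname{Aut}(\Phi)/W\simeq\operatorname{Aut}(\Pi)$. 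The genericity property of $x$ is also preserved. So we may assume $x\in\overline C$.

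\emph{Key step.} Write each $h\in H$ uniquely as $h=w_h\gamma_h$ with $w_h\in W$ and $\gamma_h\in\operatorname{Aut}(\Pi)$, so that $\gamma_h\in\Gamma$ and every element of $\Gamma$ arises this way. Since $\gamma_h$ permutes $\Pi$, it preserves $\overline C$, so $\gamma_hx\in\overline C$. We have $w_h(\gamma_hx)=hx=x\in\overline C$. Each $W$-orbit meets $\overline C$ in exactly one point, so $\gamma_hx=x$ and $w_h$ fixes $x$. By Chevalley's theorem, the stabilizer of $x\in\overline C$ in $W$ is the parabolic subgroup $W_J$ generated by the reflections in the simple roots $J=\{\beta\in\Pi:(\beta,x)=0\}$. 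By the genericity of $x$, every $\beta\in J$ is orthogonal to all of $U$, so $U\subset(\operatorname{span}J)^\perp$. Each generator of $W_J$ acts trivially on $(\operatorname{span}J)^\perp$, hence so does $w_h$. Thus for every $y\in U$,
\[
 y=hy=w_h\gamma_hy,
\]
so $\gamma_h y=w_h^{-1}y$. We still need to know that $\gamma_hy$ lies in $(\operatorname{span}J)^\perp$ before concluding $\gamma_hy=y$.

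\emph{Main obstacle.} The delicate point is showing that $\gamma_h$ preserves $J$. It does: $\gamma_h$ is an isometry permuting $\Pi$ and fixing $x$, so it permutes the simple roots orthogonal to $x$, i.e.\ it preserves $J$. Hence $\gamma_h$ preserves $(\operatorname{span}J)^\perp$, so $\gamma_hy\in(\operatorname{span}J)^\perp$, and $w_h^{-1}$ fixes it. Therefore $\gamma_hy=w_h^{-1}(\gamma_hy)=y$, where the last step applies $w_h^{-1}$ to the identity $y=w_h\gamma_hy$. So $U\subset V^{\gamma_h}$ for all $h\in H$, that is, $U\subset V^\Gamma$. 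This gives \eqref{eq:root-inequality}. The chamber and parabolic-stabilizer facts I would cite from standard Bourbaki theory; the rest is the bookkeeping above.
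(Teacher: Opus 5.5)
Your proof is correct and is essentially the paper's argument: pick a generic $x\in V^H$, conjugate it into $\overline{C}$, and show that the Weyl component of each $h\in H$ lies in the stabilizer $W_x$, which fixes $V^H$ pointwise. The only difference is how $w_h\in W_x$ is obtained: the paper uses transitivity of $W_x$ on chambers whose closures contain $x$, while you use the semidirect decomposition and the fact that $\overline{C}$ is a strict fundamental domain.

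Your ``main obstacle'' is not actually an obstacle. Since $y\in U\subset(\operatorname{span}J)^\perp$ is already fixed by $w_h^{-1}\in W_J$, you get $\gamma_hy=w_h^{-1}y=y$ immediately. Also, the justification you wrote for $w_h^{-1}(\gamma_hy)=y$ does not follow from applying $w_h^{-1}$ as stated; the correct one-line deduction is $y=w_h(\gamma_hy)=\gamma_hy$.
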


\begin{proof}
Let $F=V^H$. If $F=0$ there is nothing to prove. Choose a general $x\in F$ so that a root vanishes on $x$ exactly when it vanishes identically on $F$. After Weyl conjugation, assume $x$ belongs to the closure of the fixed fundamental chamber $C$.

Let $W_x$ be the stabilizer of $x$ in $W$. It is generated by the reflections in roots vanishing at $x$, hence it fixes $F$ pointwise. For $h\in H$, the chamber $hC$ also has $x$ in its closure. The group $W_x$ acts transitively on chambers whose closures contain $x$, so there is $w_h\in W_x$ with
\[
 w_hhC=C.
\]
Then $\gamma_h:=w_hh$ stabilizes $C$, hence belongs to $\Aut(\Pi)$ and represents the image of $h$ modulo $W$. For $v\in F$,
\[
 \gamma_hv=w_hhv=w_hv=v.
\]
Thus $F\subset V^\Gamma$. Finally, $\Gamma$ permutes the basis $\Pi$, and $V^\Gamma$ is spanned by the sums over $\Gamma$-orbits. Hence $\dim V^\Gamma=|\Pi/\Gamma|$.
\end{proof}

\begin{definition}
Let $c_{2,\alpha}$ be the number of discrepancy-zero divisorial valuations of $Y_0$ whose center is the closure of $S_\alpha$, and put
\[
 c_2(Y_0):=\sum_\alpha c_{2,\alpha}.
\]
\end{definition}

\begin{lemma}\label{lem:c2-orbits}
For every $\alpha$,
\begin{equation}\label{eq:c2-orbits}
 c_{2,\alpha}=|\Pi_\alpha/\Gamma_\alpha|.
\end{equation}
If $Y_0$ is the canonical model of the smooth central fiber $X_0$, and $c$ denotes the number of crepant prime divisors on $X_0$ contracted by the canonical map but not fixed, then
\begin{equation}\label{eq:c2-le-c}
 c_2(Y_0)\le c.
\end{equation}
\end{lemma}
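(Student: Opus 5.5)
For \eqref{eq:c2-orbits}, I would compute the crepant valuations centred on $\overline{S_\alpha}$ at a general point of $S_\alpha$, using the local analytic product structure of Lemma~\ref{lem:ADE-transverse-nearby}.

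Near a general point $p\in S_\alpha$, the central fiber $Y_0$ is locally analytically $B\times T_p$, with $T_p$ a Du Val germ of the fixed type. A discrepancy-zero divisorial valuation with center $\overline{S_\alpha}$ has a center that dominates $S_\alpha$, so it can be detected over a dense open part of $S_\alpha$. Locally it is then a crepant divisor over $B\times T_p$ dominating $B$. The crepant exceptional divisors of a Du Val surface are exactly the exceptional curves of its minimal resolution, since all other exceptional divisors have positive discrepancy. Hence over $B\times T_p$ the crepant divisors dominating $B$ are precisely $B\times C_i$, one for each simple root $C_i\in\Pi_\alpha$.

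To pass to global divisorial valuations on $Y_0$, I would take a resolution that is, over a dense open part of $S_\alpha$, the relative minimal resolution of the transverse family. For instance, blow up the reduced singular locus repeatedly over $S_\alpha$; this is compatible with the local product charts. Its exceptional divisors over $S_\alpha$ form a smooth family whose fibers are the curves $C_i$. The irreducible components are the closures of monodromy orbits of these curves under continuation along $S_\alpha$, and this continuation is exactly the diagram monodromy $\Gamma_\alpha$. So the crepant valuations with center $\overline{S_\alpha}$ correspond bijectively to $\Pi_\alpha/\Gamma_\alpha$.

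The main care point is to check that analytic-local crepant divisors glue to algebraic divisorial valuations, and that distinct orbits give distinct valuations. Both should follow from the uniqueness of the minimal resolution, which is functorial for analytic isomorphisms as in Lemma~\ref{lem:ADE-outer-naturality}, together with irreducibility of orbit closures.

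For \eqref{eq:c2-le-c}, let $\phi\colon X_0\dashrightarrow Y_0$ be the canonical map. With $p,q$ and $F$ as in Lemma~\ref{lem:fixed-part}, take a valuation $E$ counted by $c_{2,\alpha}$. Then $a(E;Y_0)=0$. Since
\[
 K_W=q^*K_{Y_0}+E_q=p^*K_{X_0}+E_p,\qquad E_q=E_p+F,
\]
and $E_p,F\ge0$, we get $a(E;X_0)=0$ and $E\not\subset\Supp F$. Because $X_0$ is smooth, every $p$-exceptional divisor has positive discrepancy. So $E$ is not $p$-exceptional: it is the strict transform of a prime divisor $D\subset X_0$. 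This $D$ is contracted by the canonical map onto $\overline{S_\alpha}$, is crepant, and is not stable fixed, since its coefficient in $F$ is zero. Distinct valuations give distinct $D$, and different $\alpha$ give different centers. Summing over $\alpha$ therefore gives an injection into the set counted by $c$, which proves $c_2(Y_0)\le c$.
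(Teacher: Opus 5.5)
Your proposal is correct and follows essentially the same route as the paper: the orbit count comes from the crepant exceptional curves of the transverse minimal resolution, grouped into $\Gamma_\alpha$-orbits along $S_\alpha$. The inequality comes from $E_q=E_p+F$ with both summands nonnegative, which forces each such valuation to be a crepant, non-fixed prime divisor on $X_0$. Your iterated blow-up of the singular locus just makes explicit the resolution that the paper leaves implicit; this blow-up is crepant for Du Val germs and commutes with the local product. The one point to add is that the common resolution $W$ must be chosen to carry the given valuation, as the paper does.
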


\begin{proof}
On a transverse Du Val surface, the exceptional curves of the minimal resolution are exactly the discrepancy-zero divisorial valuations, while further point blowups have positive discrepancy. As these curves move along $S_\alpha$, the graph monodromy groups them precisely into $\Gamma_\alpha$-orbits. An orbit closes up to one divisorial valuation over the generic point of $S_\alpha$, which proves \eqref{eq:c2-orbits}.

Let $v$ be such a discrepancy-zero valuation and choose a common smooth projective resolution carrying $v$ and principalizing the canonical base ideal. From \eqref{eq:fixed-part},
\[
 0=a(v;Y_0)=a(v;X_0)+\ord_vF.
\]
Both terms on the right are nonnegative. Since $X_0$ is smooth, an exceptional divisor over $X_0$ has positive discrepancy. Therefore $v$ must be the ordinary valuation of a prime divisor on $X_0$, and $\ord_vF=0$. It is contracted by the canonical map and is crepant, but is not fixed. Distinct valuations give distinct prime divisors, which proves \eqref{eq:c2-le-c}.
\end{proof}

\begin{theorem}
\label{thm:nearby-general}
Let $g\colon Y\to\Delta$ be flat and projective, with $Y$ normal
of pure dimension $n+1$, $Y_0$ normal irreducible with canonical
singularities, and $Y_t$ smooth for $t\ne0$.
If $H^2_{\lim}(Y_t)$ is pure of weight two, then
\begin{equation}\label{eq:nearby-general}
 b_2(Y_t)\le\Ib_2(Y_0)+c_2(Y_0).
\end{equation}
\end{theorem}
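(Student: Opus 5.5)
The plan is to chain the results of Sections~\ref{sec:nearby} and \ref{sec:ADE}, one stratum at a time. We start from the hypothesis that $H^2_{\lim}(Y_t)$ is pure of weight two. Lemma~\ref{lem:weight-selection} then gives
\[
 b_2(Y_t)\le\dim_\Q H^{2-n}\bigl(Y_0,\Gr_n^W\mathcal J\bigr).
\]
Proposition~\ref{prop:only-codim2} rewrites the right-hand side as $\Ib_2(Y_0)+\sum_\alpha h^0(S_\alpha,M_\alpha)$. Normality of $Y$ and $Y_0$, together with smoothness of $Y_t$ for $t\ne0$, are exactly what the nearby-cycle setup needs. In particular, $\mathcal J$ is the constant module $\Q[n]$ near the regular locus of $Y_0$, and the proper supports have dimension at most $n-2$.

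It then suffices to prove, for each $\alpha$,
\[
 h^0(S_\alpha,M_\alpha)\le c_{2,\alpha}.
\]
Lemma~\ref{lem:ADE-transverse-nearby} applies because $Y_0$ has canonical singularities, so the transverse slices along codimension-two strata are Du Val. It identifies the stalks of $L_\alpha$ with $H^2$ of the transverse Milnor fiber, that is, rationally and equivariantly with $V_{\alpha,\Q}$. The monodromy group $H_\alpha\subset\Aut(\Phi_\alpha)$ is finite. Lemma~\ref{lem:subquotient} gives $h^0(S_\alpha,M_\alpha)\le\dim V_{\alpha,\Q}^{H_\alpha}$. Lemma~\ref{lem:mod-Weyl} identifies the image of $H_\alpha$ modulo $W_\alpha$ with the diagram monodromy $\Gamma_\alpha$, up to conjugacy. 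Conjugating $\Gamma_\alpha$ inside $\Aut(\Pi_\alpha)$ does not change the number of orbits. Lemma~\ref{lem:root-inequality} then yields
\[
 \dim V_\alpha^{H_\alpha}\le|\Pi_\alpha/\Gamma_\alpha|.
\]
The rational and real invariant dimensions agree, since $H_\alpha$ acts through a rational representation. Finally, \eqref{eq:c2-orbits} of Lemma~\ref{lem:c2-orbits} identifies $|\Pi_\alpha/\Gamma_\alpha|$ with $c_{2,\alpha}$; that equality uses only the transverse Du Val structure, not the comparison with $X_0$. Summing over $\alpha$ gives \eqref{eq:nearby-general}.

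The main point to check carefully is consistency of strata. Proposition~\ref{prop:only-codim2} is stated for chosen dense open strata, while Lemmas~\ref{lem:ADE-transverse-nearby} and \ref{lem:subquotient} may shrink them by deleting proper closed algebraic subsets. Under such a shrinking, $h^0(S_\alpha,M_\alpha)$ and the invariants of $L_\alpha$ are unchanged, because $\pi_1$ of the smaller stratum surjects onto $\pi_1(S_\alpha)$, as remarked at the start of Section~\ref{sec:ADE}. The count $c_{2,\alpha}$ depends only on the generic point of $\overline{S_\alpha}$. So one fixed choice of $S_\alpha$ can serve all the lemmas simultaneously.

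A second minor point concerns codimension-two components of $\operatorname{Sing}Y_0$ that carry no local system, where $M_\alpha=0$. These contribute nothing on the left and a nonnegative amount $c_{2,\alpha}$ on the right, so the inequality is unaffected.
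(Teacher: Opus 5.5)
Your proposal is correct and is essentially the paper's own proof. Lemma~\ref{lem:weight-selection} and Proposition~\ref{prop:only-codim2} reduce the claim to bounding $\sum_\alpha h^0(S_\alpha,M_\alpha)$, which is then bounded stratum by stratum through Lemmas~\ref{lem:subquotient}, \ref{lem:mod-Weyl}, \ref{lem:root-inequality} and the orbit count \eqref{eq:c2-orbits}, while your remarks on shrinking strata, conjugacy of $\Gamma_\alpha$, and rational versus real invariants only make explicit what the paper's conventions at the start of Section~\ref{sec:ADE} leave implicit.
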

\begin{proof}
Lemma~\ref{lem:weight-selection} and Proposition~\ref{prop:only-codim2} give
\[
 b_2(Y_t)
 \le\Ib_2(Y_0)+\sum_\alpha h^0(S_\alpha,M_\alpha).
\]
Using Lemmas~\ref{lem:subquotient}, \ref{lem:mod-Weyl}, \ref{lem:root-inequality}, and \ref{lem:c2-orbits},
\[
 \sum_\alpha h^0(S_\alpha,M_\alpha)
 \le\sum_\alpha\dim V_\alpha^{H_\alpha}
 \le\sum_\alpha|\Pi_\alpha/\Gamma_\alpha|
 =c_2(Y_0).
\]
This is exactly \eqref{eq:nearby-general}.
\end{proof}

We now combine the two independent comparisons.
\begin{theorem}\label{thm:main-divisorial}
Let
$\pi\colon\mathcal X\to\Delta$ be a proper holomorphic submersion
with connected compact fibers of dimension $n\ge1$ such that
$K_{X_t}$ is ample for $t\ne0$. Then $K_{X_0}$ is big, $X_0$ is Moishezon, and
$\operatorname{codim}_{X_0}\mathbf B(K_{X_0})\ge2$.
There is a flat projective relative canonical model
$g\colon Y\to\Delta$ with normal total space, $Y_t\simeq X_t$ for
$t\ne0$, and $Y_0=\Proj R(X_0,K_{X_0})$.
Let $c$ be the number of prime divisors on $X_0$ contracted crepantly
by the canonical map, and let $c_2(Y_0)$ be the number of
discrepancy-zero divisorial valuations with codimension-two center
on $Y_0$. Then
\[
 c=c_2(Y_0)=b_2(X_t)-\Ib_2(Y_0)\qquad(t\ne0).
\]
\end{theorem}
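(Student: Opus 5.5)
The plan is to sandwich $b_2(X_t)-\Ib_2(Y_0)$ between two counts: Theorem~\ref{thm:ih-count-general} gives it from above as $r+c$, and Theorem~\ref{thm:nearby-general} bounds it by $c_2(Y_0)$. Lemma~\ref{lem:c2-orbits} then closes the chain, and $r=0$ yields the codimension statement.

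\emph{Setup.} Proposition~\ref{prop:bigness} gives that $K_{X_0}$ is big and $X_0$ is Moishezon. Proposition~\ref{prop:relative-model} supplies, after shrinking $\Delta$, the flat projective family $g\colon Y\to\Delta$ with $Y_t\simeq X_t$ for $t\ne0$ and $Y_0=\Proj R(X_0,K_{X_0})$. By Lemma~\ref{lem:purity}, $Y$ is normal of pure dimension $n+1$, $b_2(X_0)=b_2(Y_t)$, and $H^2_{\lim}(Y_t)$ is pure of weight two.

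\emph{Hypotheses of Theorem~\ref{thm:nearby-general}.} The central fiber $Y_0$ is the canonical model of a manifold of general type. It is therefore normal, with canonical singularities, by the BCHM input recalled after Proposition~\ref{prop:bigness}. It is irreducible because $X_0$ is connected and $Y_0$ is the image of the canonical map. The fibers $Y_t\simeq X_t$ are smooth for $t\ne0$.

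\emph{The two inequalities.} Theorem~\ref{thm:ih-count-general}, applied with $M=X_0$ and $V=Y_0$, gives
\[
 b_2(X_t)-\Ib_2(Y_0)=b_2(X_0)-\Ib_2(Y_0)=r+c ,
\]
where $r$ is the number of stable fixed prime divisors. Theorem~\ref{thm:nearby-general} gives $b_2(Y_t)-\Ib_2(Y_0)\le c_2(Y_0)$.

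\emph{Closing the chain.} Before using \eqref{eq:c2-le-c}, I must reconcile the two meanings of $c$. In Theorem~\ref{thm:ih-count-general}, $c$ counts crepant contracted divisors, and crepant there means discrepancy zero over $V$, which forces them to be non-fixed. In the proof of that theorem, $Q\setminus P$ splits into fixed divisors (positive discrepancy over $V$) and crepant non-fixed ones. This is exactly the count in Lemma~\ref{lem:c2-orbits}, so \eqref{eq:c2-le-c} gives $c_2(Y_0)\le c$. Combining,
\[
 r+c=b_2(X_t)-\Ib_2(Y_0)\le c_2(Y_0)\le c .
\]
Since $r\ge0$, equality holds throughout, so $r=0$ and $c=c_2(Y_0)=b_2(X_t)-\Ib_2(Y_0)$.

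\emph{Codimension of the base locus.} $r=0$ means no prime divisor lies in $\mathbf B(K_{X_0})$. This is because the reduced zero set of the base ideal of $|mK_{X_0}|$, for suitably divisible $m$, is $\mathbf B(K_{X_0})$ by Lemma~\ref{lem:fixed-part}. Hence $\codim_{X_0}\mathbf B(K_{X_0})\ge2$.

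\emph{Main obstacle.} The real content sits in Theorem~\ref{thm:nearby-general}, which may be assumed. At this level, the step needing most care is the bookkeeping that the crepant divisors counted in Theorem~\ref{thm:ih-count-general} are the same as those counted in Lemma~\ref{lem:c2-orbits}. Of those two notions, it is the one in Lemma~\ref{lem:c2-orbits} that explicitly carries "not fixed", so the two counts must be checked to coincide. It also needs care that the canonical-model hypotheses of Theorem~\ref{thm:nearby-general} genuinely hold for the $Y$ just constructed.
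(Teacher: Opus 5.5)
Your proposal is correct and follows the paper's proof. Both arguments combine Theorem~\ref{thm:ih-count-general}, Lemma~\ref{lem:purity}, Theorem~\ref{thm:nearby-general}, and Lemma~\ref{lem:c2-orbits} into the chain $r+c=b_2(X_0)-\Ib_2(Y_0)=b_2(Y_t)-\Ib_2(Y_0)\le c_2(Y_0)\le c$, which forces $r=0$ and equality throughout; your explicit checks of the hypotheses of Theorem~\ref{thm:nearby-general} and of the agreement between the two meanings of $c$ are points the paper leaves implicit.
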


\begin{proof}
Proposition~\ref{prop:bigness} gives the bigness of $K_{X_0}$ and the Moishezon property of $X_0$, and Proposition~\ref{prop:relative-model} constructs the flat projective canonical model $g\colon Y\to\D$. Lemma~\ref{lem:purity} gives
\[
 b_2(X_0)=b_2(Y_t),
 \qquad
 H^2_{\lim}(Y_t)\text{ pure of weight two}.
\]
Thus Theorem~\ref{thm:nearby-general} applies.

Let $r$ be the number of stable fixed prime divisors of $K_{X_0}$, let $c$ be the number of the other crepant prime divisors on $X_0$ contracted by the canonical map, and let $c_2=c_2(Y_0)$. Theorem~\ref{thm:ih-count-general}, Theorem~\ref{thm:nearby-general}, and Lemma~\ref{lem:c2-orbits} give
\begin{align*}
 r+c
 &=b_2(X_0)-\Ib_2(Y_0)\\
 &=b_2(Y_t)-\Ib_2(Y_0)\\
 &\le c_2\\
 &\le c.
\end{align*}
Since $r\ge0$, it follows that
\[
 r=0,
 \qquad
 c=c_2=b_2(Y_t)-\Ib_2(Y_0).
\]
Hence the stable base locus has no divisorial component, which completes the proof.
\end{proof}

\section{The central fiber is K\"ahler in dimension three}
\label{sec:base-elimination}

We now assume $n=3$. Choose the divisible integer $m$ of Proposition~\ref{prop:relative-model} and put $B=\Bs|mK_{X_0}|=\mathbf B(K_{X_0})$. Theorem~\ref{thm:main-divisorial} gives
\begin{equation}\label{eq:two-consequences}
 \dim B\le1,\qquad c=c_2(Y_0).
\end{equation}
On a common smooth projective resolution
$p\colon W\to X_0$, $q\colon W\to Y_0$ principalizing the base ideal,
Lemma~\ref{lem:fixed-part} gives
\begin{equation}\label{eq:fixed}
 p^*K_{X_0}=q^*K_{Y_0}+F,\qquad F\ge0,\qquad
 \mathfrak b_m\mathcal O_W=\mathcal O_W(-mF),
\end{equation}
with $F$ $q$-exceptional.

\begin{lemma}\label{lem:target-open}
There exists a Zariski-open subset \(U\subset Y_{0\mathrm{reg}}\)
such that the inverse canonical map identifies \(U\)
biholomorphically with an open subset of \(X_0\setminus B\), and
\[
  D:=Y_0\setminus U,\qquad \dim D\le1.
\]
\end{lemma}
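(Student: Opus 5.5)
We work with the canonical map \(\phi\colon X_0\dashrightarrow Y_0\), which is the rational map defined by \(|mK_{X_0}|\) composed with the identification of its image with \(Y_0=\Proj R(X_0,K_{X_0})\). On \(X_0\setminus B\) this map is a holomorphic morphism, because \(|mK_{X_0}|\) is base point free there. The plan is to find a Zariski-open subset of \(X_0\setminus B\) on which \(\phi\) is an open embedding into \(Y_{0\mathrm{reg}}\), and to show that its image has complement of dimension at most one in \(Y_0\). We will do this on the common resolution \(W\) with \(p\colon W\to X_0\) and \(q\colon W\to Y_0\) of \eqref{eq:fixed}. There, \(\phi\circ p=q\), and \(p\) is an isomorphism over \(X_0\setminus B\), since the base ideal is already principal there.

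\emph{Step 1.} Let \(E\subset W\) be the union of \(p^{-1}(B)\), the \(q\)-exceptional locus \(\Exc(q)\), and \(q^{-1}(\operatorname{Sing}Y_0)\). We set
\[
 U:=Y_0\setminus q(E).
\]
The map \(q\) is proper, so \(q(E)\) is a closed algebraic subset and \(U\) is Zariski-open in \(Y_{0\mathrm{reg}}\). Over \(U\), the morphism \(q\) is proper and bimeromorphic and has no exceptional locus. Since \(U\) is normal, Zariski's main theorem makes \(q\) an isomorphism over \(U\). Moreover \(q^{-1}(U)\) avoids \(p^{-1}(B)\), and \(p\) is an isomorphism off \(p^{-1}(B)\). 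Therefore \(p\circ q^{-1}\) identifies \(U\) biholomorphically with an open subset of \(X_0\setminus B\), as the statement requires.

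\emph{Step 2.} We bound \(\dim D\) for \(D=q(E)\) by treating the three pieces of \(E\) separately.
\begin{itemize}
\item The set \(\operatorname{Sing}Y_0\) has codimension at least two because \(Y_0\) is normal, so \(\dim\operatorname{Sing}Y_0\le1\).
\item The image \(q(\Exc q)\) has codimension at least two because \(Y_0\) is normal and \(q\) is birational, so \(\dim q(\Exc q)\le1\).
\item For \(p^{-1}(B)\) we use \(\dim B\le1\) from \eqref{eq:two-consequences}. Any component of \(p^{-1}(B)\) that is a divisor on \(W\) is \(p\)-exceptional. By the proof of Theorem~\ref{thm:ih-count-general} (\(P\subset Q\)) it is then \(q\)-exceptional, so its image lies in \(q(\Exc q)\) and has dimension at most one. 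Any non-divisorial component has dimension at most one, so its image also has dimension at most one.
\end{itemize}
Hence \(\dim D\le1\).

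\emph{Main obstacle.} The delicate point is the claim that every \(p\)-exceptional divisor is \(q\)-exceptional. This relies on the discrepancy argument: such a divisor has positive discrepancy over the smooth \(X_0\) and \(F\ge0\), so by \eqref{eq:fixed-part} it has positive discrepancy over \(Y_0\) and is therefore \(q\)-exceptional. A second subtle point is that \(p^{-1}(B)\) may contain a divisor \(G\) on \(W\) that is not \(p\)-exceptional. This cannot happen, because such a \(G\) would be the strict transform of a divisor inside \(B\), contradicting \(\dim B\le1\). With these two checks in place, the rest consists of the properness and normality arguments above.
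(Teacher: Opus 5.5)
Your set $U$ and the dimension count in Step~2 are fine and agree in substance with the paper. However, Step~1 relies on a claim that is not available: that $p$ is an isomorphism over $X_0\setminus B$ ``since the base ideal is already principal there.'' The $W$ of \eqref{eq:fixed} is an arbitrary smooth \emph{projective} common resolution principalizing $\mathfrak b_m$. Principality off $B$ only says that the principalization need not blow up there. Blowing $W$ up at a point outside $p^{-1}(B)$ gives another admissible $W$ for which your claim fails. At this stage you cannot even choose a projective $W$ with this property. If $B=\varnothing$, as Proposition~\ref{prop:semiample} will show, it would force $X_0\cong W$ to be projective. That is what the rest of Section~\ref{sec:base-elimination} is needed for, since $X_0$ is so far only known to be Moishezon.

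The repair uses an ingredient you already have, just in a different place. Because $X_0$ is smooth, $\Exc(p)$ is the union of the $p$-exceptional prime divisors. Each of these is $q$-exceptional by the discrepancy argument ($P\subset Q$), hence contained in $\Exc(q)\subset E$. Thus $q^{-1}(U)\cap\Exc(p)=\varnothing$, so $p$ maps $q^{-1}(U)$ isomorphically onto an open subset of $X_0$. This subset misses $B$ because $q^{-1}(U)$ misses $p^{-1}(B)$. This is precisely the paper's proof: it removes $\operatorname{Sing}Y_0$ and the images of the $q$-exceptional divisors, and uses purity of exceptional loci over smooth targets for both $p$ and $q$.

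Alternatively, since projectivity of $W$ plays no role in this lemma, you could work on a possibly non-projective $W$ obtained by principalizing $\mathfrak b_m$ with centers over $B$. In that case you must say so explicitly rather than invoke the $W$ of \eqref{eq:fixed}. Note also that $\mathfrak b_m\mathcal O_W=\mathcal O_W(-mF)$ gives $p^{-1}(B)=\Supp F$, so your third bullet follows directly from $F$ being $q$-exceptional.
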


\begin{proof}
Consider the common resolution in \eqref{eq:fixed}.
Every \(p\)-exceptional prime divisor \(P\) has positive discrepancy
over the smooth manifold \(X_0\), so the discrepancy identity gives
\[
  a(P;Y_0)=a(P;X_0)+\operatorname{ord}_P(F)>0.
\]
Hence \(P\) is also \(q\)-exceptional.

Remove from \(Y_0\) its singular locus and the images of all
\(q\)-exceptional divisors. Each removed subset has codimension at
least two. Since a modification onto a smooth space has purely
divisorial exceptional locus, the resulting open set \(U\) is an
isomorphism locus for both maps on the common resolution.
Moreover, \(F\) vanishes above \(U\), so its image in \(X_0\)
avoids \(B\). This proves the assertion.
\end{proof}

Let \(\mathcal I\subset\mathcal O_{\mathcal X}\) denote the relative
evaluation ideal. For the evaluation morphism
\[
  \mathrm{ev}:\pi^*\pi_*\omega_{\mathcal X/\Delta}^{\otimes m}
    \longrightarrow\omega_{\mathcal X/\Delta}^{\otimes m},
\]
it is defined by
\[
 \operatorname{im}(\mathrm{ev})
   =\mathcal I\otimes\omega_{\mathcal X/\Delta}^{\otimes m}.
\]
Its zero set is \(B\subset X_0\), because of base change and
generation on the punctured disc.
Let
\[
  a:\Gamma\longrightarrow\mathcal X,\qquad
  b:\Gamma\longrightarrow\mathcal Y
\]
be the normalization of the closure of the graph of the relative
canonical map. Indeed, the map is given by the degree-$m$ evaluation sections into
the projective bundle $\mathbb P(E_m^*)$, and its image lies in
$\mathcal Y$, so the usual graph construction identifies
$a$ with the normalized blow-up of $\mathcal I$. The map
\[
  (a,b):\Gamma\longrightarrow
       \mathcal X\times_\Delta\mathcal Y
\]
is finite onto the closed graph.

\begin{lemma}\label{lem:connected}
For the sets \(B\) and \(D\) above,
\begin{equation}\label{eq:bubble-target}
  b\bigl(a^{-1}(B)\bigr)\subset D.
\end{equation}
\end{lemma}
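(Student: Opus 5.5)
The plan is to argue by contradiction, working locally over a point of $U$. Suppose $z\in a^{-1}(B)$ and $y:=b(z)\in U$. I will show that over a small neighborhood of $y$ in $\mathcal Y$, the closed graph of the relative canonical map is the graph of a holomorphic inverse map. That graph is disjoint from $B\times\mathcal Y$, which contradicts $a(z)\in B$.

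\emph{Step 1: local structure at $y$.} Since $y\in U\subset Y_{0\mathrm{reg}}$ and $g$ is flat with smooth central fiber at $y$, the map $g$ is a submersion near $y$, and $\mathcal Y$ is smooth there. By Lemma~\ref{lem:target-open}, the point $x':=\varphi^{-1}(y)$ lies in $X_0\setminus B$, where $\varphi$ is the canonical map of $X_0$. The zero set of $\mathcal I$ is $B$, so the relative canonical map $\Phi\colon\mathcal X\dashrightarrow\mathcal Y$ is a holomorphic morphism on a neighborhood of $x'$. Its restriction to $X_0$ is a local biholomorphism at $x'$, onto an open subset of $U$. Both $\pi$ and $g$ are submersions, $\Phi$ commutes with them, and $d\Phi$ is bijective on fiber tangent spaces and on normal directions. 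By the inverse function theorem, $\Phi$ therefore maps a neighborhood $N$ of $x'$ biholomorphically onto a neighborhood $N'$ of $y$. Write $\psi\colon N'\to N$ for the inverse. After shrinking, I take $\overline N\cap B=\emptyset$.

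\emph{Step 2: the closed graph over $N'$.} Let $G\subset\mathcal X\times_\Delta\mathcal Y$ be the closure of the graph of $\Phi|_{\mathcal X\setminus B}$. This graph is irreducible, and its part over $t\ne0$ is dense: the part over $X_0\setminus B$ is a proper analytic subset of an irreducible space. Hence $G$ is the closure of the graph of $\Phi$ over $\Delta^*$. Take a point $(x,y')\in G$ with $y'\in N'$. It is a limit of points $(x_k,\Phi(x_k))$ with $\pi(x_k)\ne0$ and $\Phi(x_k)\to y'$. For $t\ne0$, the map $\Phi_t\colon X_t\to Y_t$ is an isomorphism (Proposition~\ref{prop:relative-model}), and $\psi$ restricted to $Y_t\cap N'$ is an inverse to $\Phi_t$ on $N\cap X_t$. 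Injectivity of $\Phi_t$ then forces $x_k=\psi(\Phi(x_k))$. Passing to the limit gives $x=\psi(y')\in N$, so $x\notin B$. Thus
\[
 G\cap(\mathcal X\times_\Delta N')=\{(\psi(y'),y'):y'\in N'\},
\]
which is disjoint from $B\times_\Delta\mathcal Y$.

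\emph{Step 3: conclusion.} The map $(a,b)\colon\Gamma\to G$ is finite and surjective, so $b^{-1}(N')$ maps into the set displayed above. Every point of $b^{-1}(N')$ therefore has $a$-image in $N$, and hence outside $B$. This contradicts $z\in a^{-1}(B)$ with $b(z)=y\in N'$, and proves \eqref{eq:bubble-target}.

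The main obstacle is Step 2: ruling out limit points of the graph coming from sequences that approach $B$ while their images converge into $U$. The argument for this uses two facts: the graph over the punctured disc is dense in $G$, and $\Phi$ is injective on the fibers with $t\ne0$. Compatibility of $\psi$ with these fibers comes from Step 1, which must therefore be set up with $N'$ taken as a $g$-saturated product neighborhood.
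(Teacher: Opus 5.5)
Your proof is correct, but its key step differs from the paper's. Step~1 is the same in both: the inverse function theorem at the point of $X_0\setminus B$ over $y$.

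The paper then argues as follows. This local biholomorphism makes $b^{-1}(y)$ contain an isolated point. Since $b$ is proper bimeromorphic onto the normal space $\mathcal Y$, Zariski's main theorem gives connected fibers, so $b^{-1}(y)$ is that single point and its $a$-image lies outside $B$.

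You instead show directly, by a sequential closure argument, that the closed graph over $N'$ is exactly the graph of the local inverse $\psi$. This uses two facts: the punctured part of the graph is dense, and $\Phi_t$ is injective for $t\ne0$.

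The paper's route is shorter once Zariski's main theorem is available. Yours is elementary and avoids connectedness of fibers and normality altogether; the only global input is that $\Phi$ is injective over $\Delta^*$. It also gives a slightly stronger local conclusion: $b^{-1}(N')$ maps onto the smooth graph of $\psi$, so $b$ is an isomorphism over $N'$.

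One small correction. The compatibility $\pi\circ\psi=g$ on $N'$ is automatic from $g\circ\Phi=\pi$ on $N$, so $N'$ needs no special shape. The phrase ``$g$-saturated'' would in fact mean a union of whole fibers of $g$, which is not what you want.
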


\begin{proof}
Fix \(y\in U\), and let \(x\in X_0\setminus B\) be the corresponding
point. The relative canonical map is holomorphic near \(x\).
The space \(\mathcal Y\) is smooth near \(y\), and \(g\) is a
submersion there: this follows from flatness and the smoothness of
the central fiber at \(y\). On the central tangent spaces the
canonical map is an isomorphism, and on the normal direction to
the central fiber it induces the identity on the base.
The holomorphic inverse function theorem therefore shows that
the relative canonical map is locally biholomorphic near \(x\).

It follows that \(b^{-1}(y)\) contains an isolated point.
The proper bimeromorphic map \(b\) has connected fibers because
\(\mathcal Y\) is normal. Thus \(b^{-1}(y)\) consists of that
single point, whose \(a\)-image is \(x\notin B\).
No point of \(a^{-1}(B)\) can therefore map to \(y\).
\end{proof}

\begin{proposition}\label{prop:semiample}
The set \(B\) is empty. In particular, the canonical map is a
proper birational morphism
\begin{equation}\label{eq:canonical-morphism}
  f:X_0\longrightarrow Y_0,\qquad K_{X_0}=f^*K_{Y_0}.
\end{equation}
\end{proposition}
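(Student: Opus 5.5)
We argue by contradiction, via a dimension count on the normalized graph $\Gamma$. Suppose $B\neq\emptyset$, and let $B_1$ be an irreducible component of $B$. By \eqref{eq:two-consequences}, $\dim B_1\le1$.

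\textbf{Step 1: the preimage of $B_1$ is divisorial.} Since $a\colon\Gamma\to\mathcal X$ is the normalized blow-up of the evaluation ideal $\mathcal I$, whose zero set is $B\subset X_0$, the preimage $a^{-1}(B)$ is the support of the Cartier divisor defined by $\mathcal I\OO_\Gamma$. It is therefore of pure dimension $\dim\Gamma-1=3$. Because $B_1$ is a component of the zero set, $a^{-1}(B_1)$ contains at least one irreducible component $G$ with $\dim G=3$ and $a(G)\subset B_1$.

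\textbf{Step 2: the fibers of $(a,b)$ over $B_1$ are too small.} By Lemma~\ref{lem:connected}, $b(G)\subset D$, and Lemma~\ref{lem:target-open} gives $\dim D\le1$. The map $(a,b)$ is finite onto the closed graph, so $G$ maps finitely into $B_1\times_\Delta D$. This forces
\[
 3=\dim G\le\dim B_1+\dim D\le 2,
\]
a contradiction. Hence $B=\emptyset$.

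\textbf{Step 3: the crepant canonical morphism.} With $B=\emptyset$, the linear system $|mK_{X_0}|$ is base point free. Lemma~\ref{lem:fixed-part} then gives $F=0$ and semiampleness of $K_{X_0}$. The canonical map is consequently a morphism $f\colon X_0\to Y_0=\Proj R(X_0,K_{X_0})$. It is proper, being a map between compact spaces, and bimeromorphic, since $K_{X_0}$ is big. Pulling back, $mK_{X_0}=f^*\OO_{Y_0}(1)=f^*(mK_{Y_0})$. Alternatively, \eqref{eq:fixed} with $F=0$ gives $p^*K_{X_0}=q^*K_{Y_0}$, and since $f$ factors through the common resolution, $K_{X_0}=f^*K_{Y_0}$.

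The main obstacle is Step 1. The point to verify is that $a^{-1}(B)$ really has pure codimension one in $\Gamma$ and that $a^{-1}(B_1)$ is nonempty over each component. The first follows because the inverse image ideal of $\mathcal I$ on the normalized blow-up is invertible. For the second, note that $a$ is surjective and $B_1$ lies in the cosupport of $\mathcal I$, so $a^{-1}(B_1)$ is a nonempty union of components of that Cartier divisor. One must also check that "finite onto the closed graph" is enough to bound dimensions through the product $B_1\times D$, which holds because the image of $G$ lies in $(B_1\times D)\cap\text{graph}$.
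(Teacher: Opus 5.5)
Your proposal is correct and is essentially the paper's proof: a prime component of the Cartier divisor $\mathcal I\OO_\Gamma$ is three-dimensional and lies over $B$. By Lemma~\ref{lem:connected} it maps into $D$, and since $(a,b)$ is finite it maps finitely into a set of dimension at most $1+1=2$; base-point-freeness of $|mK_{X_0}|$ then gives $f$ and $F=0$. The only difference is that you pass to a component $B_1$, which is unnecessary since $\dim B\le1$ already suffices. Your remark that $a^{-1}(B_1)$ is a union of components of the divisor is not accurate in general, though surjectivity of $a$ does give a component mapping onto $B_1$.
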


\begin{proof}
Suppose \(B\ne\varnothing\). The ideal \(\mathcal I\) is nontrivial,
and its zero locus has codimension at least three in the smooth
fourfold \(\mathcal X\). Its pullback to the normalized blow-up is an
invertible ideal, and it is proper above any point of $B$, because the
original generators all vanish there. It therefore defines a nonzero
effective Cartier divisor.
Choose a prime component \(E\) of that divisor. Then
\[
  \dim E=3,\qquad a(E)\subset B.
\]
By Lemma~\ref{lem:connected}, \(b(E)\subset D\).
Since the normalized graph is finite over the closed graph,
\[
  3=\dim E
    =\dim(a,b)(E)
    \le\dim(B\times D)
    \le1+1=2,
\]
a contradiction. Hence \(B=\varnothing\).

The complete base-point-free system \(|mK_{X_0}|\) now gives the
canonical morphism \(f\), with
\[
  \mathcal O_{X_0}(mK_{X_0})\simeq f^*\mathcal O_{Y_0}(mK_{Y_0}).
\]
Equivalently, \eqref{eq:fixed} has \(F=0\).
Thus \eqref{eq:canonical-morphism} holds for compatible canonical
divisors.
\end{proof}

\subsection{The exceptional fibers are curves}\label{sec:exceptional-fibers}

\begin{proposition}\label{prop:fiber-dimension}
Every exceptional prime divisor of \(f\) maps onto a curve.
In particular,
\begin{equation}\label{eq:fiber-dimension}
  \dim f^{-1}(y)\le1\qquad\text{for every }y\in Y_0.
\end{equation}
\end{proposition}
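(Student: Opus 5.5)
We use the count $c=c_2(Y_0)$ from \eqref{eq:two-consequences}. By Proposition~\ref{prop:semiample}, $f$ is a crepant birational morphism from the smooth threefold $X_0$ onto the normal canonical model $Y_0$. Consequently every $f$-exceptional prime divisor $E\subset X_0$ is crepant: it is contracted by the canonical map, and $F=0$ means it is not fixed. Its image $f(E)$ has dimension $0$ or $1$, because $Y_0$ is normal and exceptional divisors map to loci of codimension at least two. So the number $c$ of Theorem~\ref{thm:main-divisorial} counts all $f$-exceptional prime divisors. Each such divisor $E$ defines a discrepancy-zero divisorial valuation $\ord_E$ of $Y_0$ with center $f(E)$.

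We split this count according to the dimension of the center. Let $c'$ be the number of $f$-exceptional prime divisors with $\dim f(E)=1$ and $c''$ the number with $f(E)$ a point, so $c=c'+c''$. A divisor with one-dimensional center $\overline{S}$ contributes a discrepancy-zero valuation with codimension-two center. Distinct divisors give distinct valuations. Moreover, every discrepancy-zero valuation with codimension-two center arises this way: the proof of Lemma~\ref{lem:c2-orbits} shows it is the valuation of a prime divisor on $X_0$, and $f$ is a morphism, so that divisor is $f$-exceptional with center the curve. Hence $c_2(Y_0)=c'$. Substituting into $c=c_2(Y_0)$ gives $c''=0$, so no exceptional prime divisor maps to a point.

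For the fiber-dimension bound \eqref{eq:fiber-dimension}, suppose some fiber $f^{-1}(y)$ has a two-dimensional component $G$. Then $G$ is a prime divisor of $X_0$ contracted by $f$, hence $f$-exceptional, and it maps to the point $y$. This contradicts $c''=0$. A fiber cannot be three-dimensional because $f$ is birational. Fibers over points outside $f(\Exc f)$ are finite, and over points of a curve image $f(E)$ the fiber lies in $\Exc f$, which is a union of divisors each mapping onto a curve. Therefore every fiber component has dimension at most one.

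The main point to verify is the identification $c_2(Y_0)=c'$. It must not undercount divisors lying over the non-generic part of a singular curve. Divisors with one-dimensional image $f(E)$ have generic point in the codimension-two locus of $Y_0$. A curve $f(E)$ of $Y_0$ over which a crepant divisor lies is contained in $\operatorname{Sing}Y_0$, since smooth points have no crepant exceptional valuations. So $f(E)$ is the closure of one of the strata $S_\alpha$, and $\ord_E$ is counted in $c_{2,\alpha}$. Conversely, valuations counted in $c_{2,\alpha}$ have center exactly $\overline{S_\alpha}$. With this bookkeeping the subtraction above is exact.
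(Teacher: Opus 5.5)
Your proposal is correct and follows essentially the same route as the paper. Since $f$ is crepant and $X_0$ is smooth, every discrepancy-zero valuation of $Y_0$ with codimension-two center is an $f$-exceptional prime divisor of $X_0$ mapping onto a curve, and $c=c_2(Y_0)$ then leaves no exceptional divisor over a point. Your explicit check that such a curve is a codimension-two component of $\operatorname{Sing}Y_0$ is careful bookkeeping that the paper leaves implicit.

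The one blemish is the redundant sentence asserting that $\Exc f$ is a union of divisors. This is not justified at this stage: purity of the exceptional locus needs a smooth or $\mathbb{Q}$-factorial target, and $Y_0$ is only canonical. Simply drop it. Your preceding observation, that a two-dimensional fiber component would be an exceptional prime divisor over a point, already proves the fiber bound.
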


\begin{proof}
By \eqref{eq:canonical-morphism}, all \(f\)-exceptional prime
divisors are crepant. Conversely, a crepant divisorial valuation
\(v\) over \(Y_0\) satisfies
\[
  a(v;Y_0)=a(v;X_0)
\]
because \(f\) is crepant. If its center on \(X_0\) had codimension
at least two, smoothness of \(X_0\) would give \(a(v;X_0)>0\).
Hence every such valuation is represented by a prime divisor on
\(X_0\).

In particular, the crepant valuations with codimension-two center
on \(Y_0\) form a subset of the exceptional prime divisors on \(X_0\).
The equality \(c=c_2(Y_0)\) shows that this subset
already accounts for all exceptional prime divisors, so there is no
exceptional prime divisor mapping to a point.
A two-dimensional component of a fiber of a birational morphism
from a threefold would be precisely such a prime divisor.
This proves \eqref{eq:fiber-dimension}.
\end{proof}

\begin{lemma}\label{lem:rational-fibers}
Every irreducible curve contracted by \(f\) is a smooth rational
curve.
\end{lemma}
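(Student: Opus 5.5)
We want to show that every irreducible curve $C\subset X_0$ contracted by the crepant morphism $f\colon X_0\to Y_0$ of Proposition~\ref{prop:semiample} is a smooth rational curve. By Proposition~\ref{prop:fiber-dimension}, $C$ lies in an exceptional prime divisor $E$ with $f(E)=\Gamma$ a curve. The plan is to relate $C$ to the exceptional curves of a transverse Du Val surface resolution.

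\textbf{Generic points of $\Gamma$.} First treat curves over a general point of $\Gamma$. By Lemma~\ref{lem:ADE-transverse-nearby}, at such a point $y$ the variety $Y_0$ is locally analytically the product of a disc in $\Gamma$ with a Du Val germ $(T,0)$. Choose a general hyperplane-type slice $\{u=u(y)\}$ through $y$, where $u$ is the coordinate along $\Gamma$. Its preimage $S=f^{-1}(\{u=u(y)\})$ is a Cartier divisor in the smooth threefold $X_0$. For a general choice of $u$, Bertini makes $S$ a smooth surface near $f^{-1}(y)$. Adjunction and $K_{X_0}=f^*K_{Y_0}$ then give $K_S=(f|_S)^*K_T$ near the fiber. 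Hence $S\to T$ is a crepant resolution of a Du Val singularity by a smooth surface, so it is the minimal resolution. Its exceptional curves, which are exactly the components of $f^{-1}(y)$, are smooth rational $(-2)$-curves.

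\textbf{Special points of $\Gamma$.} This is the main obstacle: $C$ may be a component of a fiber over a special point $y_0\in\Gamma$. Here neither product structure nor smoothness of a slice is available. I would proceed as follows.

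\begin{enumerate}
\item Restrict to $E\to\Gamma$. Let $\nu\colon\widetilde E\to E$ be a normalization and resolution, and let $\widetilde\Gamma$ be the normalization of $\Gamma$. The general fiber of $\widetilde E\to\widetilde\Gamma$ is a chain or disjoint union of $\mathbb P^1$'s, by the previous step. So $\widetilde E$ is birationally ruled over a curve, and every fiber of the resulting map has only rational components. (After Stein factorization, the components of a degenerate fiber of a fibration whose general fiber is a union of rational curves are rational, since a limit of rational curves is a union of rational curves.) Hence $C$, being the image of such a component, is rational.
\item Prove smoothness of $C$ by an intersection-number argument. For a general transverse slice $S'$ through a point of $C$ other than $y_0$, the class $[C]$ is a limit of the cycles $f^{-1}(y_t)$, $y_t\to y_0$. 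Use $K_{X_0}\cdot C=0$ and $E\cdot C<0$ for some exceptional $E$ (negativity lemma \eqref{eq:negativity-prelim}). Then estimate $p_a(C)$ by restricting to a smooth surface containing $C$.
\end{enumerate}

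\textbf{Smoothness via a general surface through $C$.} For the second point, I would choose a general member $H$ of a very ample pullback system on $Y_0$ passing through $y_0$, together with a general local divisor containing $C$. The aim is a normal surface $S''\supset C$ with $K_{S''}\cdot C=0$ and $C^2<0$. Adjunction then gives $2p_a(C)-2=C^2+K_{S''}\cdot C<0$, so $p_a(C)=0$, and $C$ is a smooth rational curve.

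The delicate point is producing a surface $S''$ that is smooth (or Gorenstein) along $C$. I would first try a general hypersurface section of $X_0$ through $C$ pulled back from $Y_0$ twisted by a small multiple of $-E$. If that fails, I would fall back on the deformation statement announced in the introduction: contracted curves over $y_0$ are limits of exceptional curves of transverse Du Val resolutions. Being limits of $(-2)$-curves, they are numerically constrained, and this forces irreducible limits to be smooth rational curves.
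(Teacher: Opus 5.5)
\textbf{There is a genuine gap, and it is where you locate the main obstacle.}

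First, your opening reduction is not available at this point. Proposition~\ref{prop:fiber-dimension} says that exceptional prime divisors map onto curves and that fibers have dimension at most one. It does not say that every contracted curve lies in an exceptional divisor. Since $Y_0$ need not be $\Q$-factorial, $\Exc(f)$ may a priori have one-dimensional components (flopping-type curves over isolated points), and these are not limits of transverse Du Val curves at all. The paper excludes rigid contracted curves only in Lemma~\ref{lem:deform}, and its computation $\chi(N_{C/\mathcal X})=1$ already uses $C\simeq\PP^1$. The same circularity affects your fallback through Corollary~\ref{cor:generic-curves}.

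Second, over a special point of $\Gamma$ your step~(1) gives rationality but not smoothness: the image in $X_0$ of a smooth fiber component of a resolution of $E$ can be singular where $E$ is non-normal. Step~(2) is not carried out, and each proposed ingredient fails as stated:
\begin{itemize}
\item A pulled-back section through $y_0$ contains all of $f^{-1}(y_0)$ and may be singular, even non-normal, along $C$.
\item The identity $2p_a(C)-2=C^2+K_{S''}\cdot C$ needs $S''$ Gorenstein and $C$ Cartier on it.
\item Twisting by $-E$ makes $K_{S''}\cdot C=(K_{X_0}+S'')\cdot C\neq0$ in general.
\end{itemize}
Nor can ``numerical constraints'' on limits of $(-2)$-curves force smoothness. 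An irreducible cycle-theoretic limit of smooth rational curves can be a singular rational curve: a plane nodal cubic is a limit of twisted cubics, the genus being absorbed by an embedded point of the flat limit.

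\textbf{The missing idea is cohomological, and it removes every case distinction.}
\begin{enumerate}
\item Canonical singularities are rational. So for a smooth projective modification $h\colon W\to X_0$ and $q=f\circ h$, one has $R^1q_*\OO_W=0$.
\item Since $h_*\OO_W=\OO_{X_0}$, the Leray spectral sequence gives $R^1f_*\OO_{X_0}\hookrightarrow R^1q_*\OO_W=0$. No projectivity of $f$ is needed.
\item The theorem on formal functions then gives $\varprojlim_k H^1(F_k,\OO_{F_k})=0$ for the thickenings $F_k$ of any fiber.
\item These thickenings have one-dimensional support, so $H^2$ of the kernels vanishes and the transition maps on $H^1$ are surjective. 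Hence $H^1(F_{\mathrm{red}},\OO)=0$, and then $H^1(C,\OO_C)=0$ for every component $C$.
\item Thus $p_a(C)=0$, which forces $C\simeq\PP^1$.
\end{enumerate}
This treats all contracted curves uniformly --- over general or special points, inside an exceptional divisor or not --- with no slicing or degeneration argument.
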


\begin{proof}
Canonical singularities are rational \cite[Theorem~5.22]{KollarMori1998}, so
\[
  R^1f_*\mathcal O_{X_0}=0.
\]
This vanishing does not require prior projectivity of \(f\):
choose a smooth projective modification $h\colon W\to X_0$ and put
$q=f\circ h$. The map $q$ is a projective resolution of $Y_0$,
so $R^1q_*\mathcal O_W=0$. Since $h_*\mathcal O_W=\mathcal O_{X_0}$,
the Leray spectral sequence gives an injection
\[
 R^1f_*\mathcal O_{X_0}\hookrightarrow R^1q_*\mathcal O_W=0.
\]

Fix \(y\in Y_0\), and let \(F_k\) be the infinitesimal neighborhood of
the scheme-theoretic fiber defined by
\(\mathfrak m_y^{k+1}\mathcal O_{X_0}\).
The theorem on formal functions \cite[Section~6, Hauptsatz~IIa]{Grauert1960} gives
\[
  0=\widehat{(R^1f_*\mathcal O_{X_0})_y}
    \simeq\varprojlim_k H^1(F_k,\mathcal O_{F_k}).
\]
All these spaces have support of dimension at most one, so the kernels
of the surjections
\(\mathcal O_{F_{k+1}}\to\mathcal O_{F_k}\) have
vanishing second cohomology. Hence the induced transition maps on
\(H^1\) are surjective, and it follows that each \(H^1(F_k,\mathcal O_{F_k})\)
vanishes. Passing to the reduced fiber \(F_{\mathrm{red}}\)
gives
\[
  H^1(F_{\mathrm{red}},\mathcal O_{F_{\mathrm{red}}})=0.
\]
This is the same formal-functions argument used for rational
singularities in \cite[proof of Theorem~2.2]{Stepanov2006}.

If \(C\) is an irreducible component of \(F_{\mathrm{red}}\),
the surjection
\(\mathcal O_{F_{\mathrm{red}}}\to\mathcal O_C\) again induces a
surjection on \(H^1\). Thus \(H^1(C,\mathcal O_C)=0\).
The normalization sequence of the reduced irreducible curve
\(C\) now shows that its normalization has genus zero and that
all its singularity defects vanish. A normal complex curve is smooth, so \(C\simeq\mathbb P^1\).
\end{proof}

\subsection{Deformation of contracted curves}\label{sec:curve-deformation}

\begin{lemma}\label{lem:deform}
Let \(C\subset X_0\) be an irreducible curve contracted by \(f\).
There exists a nonconstant analytic family of smooth rational
curves \(C_s\subset X_0\), with \(C_0=C\), such that every \(C_s\)
is contracted by \(f\). The points \(f(C_s)\) cannot all be equal.
\end{lemma}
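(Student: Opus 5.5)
Since $f$ is crepant (Proposition~\ref{prop:semiample}) and, by Proposition~\ref{prop:fiber-dimension}, its exceptional locus is a union of prime divisors $E$ each mapping onto a curve, I would not deform $C$ abstractly through its normal bundle. I would instead locate $C$ inside an exceptional divisor and move it along the fibration of that divisor over its image curve. The first step is to show that the exceptional locus of $f$ is purely divisorial. If $C$ lay in no exceptional prime divisor, then $f(C)$ would be an isolated point of the image of a one-dimensional fiber component. This is a small contraction, and it is excluded by the count $c=c_2(Y_0)$ of Theorem~\ref{thm:main-divisorial}. Indeed, on a common resolution, the number of crepant divisorial valuations with codimension-two center equals the number of exceptional prime divisors on $X_0$. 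I would also argue directly: at a point of $f(C)$ not in $\bigcup f(E)$, the map $f$ is an isomorphism outside finitely many curves. A small crepant contraction of a smooth threefold to a canonical threefold is possible, so the count alone may not suffice. For this reason I would rather use Lemma~\ref{lem:target-open} together with the transverse Du Val structure of Lemma~\ref{lem:ADE-transverse-nearby}.

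\textbf{Main line.} Let $y=f(C)$. By Lemma~\ref{lem:ADE-transverse-nearby} and the canonical singularities of $Y_0$, the point $y$ lies on the singular locus. I would treat two cases.
\begin{enumerate}
\item If $y$ lies on a codimension-two singular curve $\overline{S_\alpha}$, take an exceptional divisor $E\supset C$ with $f(E)=\overline{S_\alpha}$.
\item Otherwise $y$ is an isolated singular point. Then every curve over $y$ lies in a divisor contracted to a point, or the contraction is small.
\end{enumerate}
The second case must be ruled out. Divisors contracted to points are excluded by Proposition~\ref{prop:fiber-dimension}. For small contractions, I would use that an isolated canonical, hence rational Gorenstein, threefold point admitting a crepant small resolution by a smooth $X_0$ is cDV. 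Its general hyperplane section through $y$ is then Du Val, and the pullback of that section contains $C$ as an exceptional curve. However, this does not produce a moving family. So the key claim to establish is that no isolated point of $\operatorname{Sing}Y_0$ has positive-dimensional fiber.

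For that claim I would compare cohomology once more, via the nearby-cycle argument of Section~\ref{sec:nearby}. A small contraction with $C\cong\PP^1$ contributes a class to $H^2(X_0)$ whose image in $\IH^2$ is killed, or else it breaks the equality $b_2(X_t)=\Ib_2(Y_0)+c_2$. Concretely, I would apply Lemma~\ref{lem:resolution-ih} to a resolution factoring through $X_0$. Since $f$ is small over $y$, the decomposition theorem gives $b_2(X_0)=\Ib_2(Y_0)+\#\{\text{exceptional divisors}\}$, and small fibers contribute nothing in degree two. Hence purity alone does not exclude them. This is the main obstacle. I would try to exclude small contractions using the flat family $g$: the fiber $Y_t$ is smooth, while a small resolution of an isolated cDV point $y$ that smooths in $Y$ must have vanishing cohomology in $H^3$. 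Alternatively, I would appeal to the remaining part of Theorem~\ref{thm:main}, where the curve $C$ is ultimately deformed.

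\textbf{Case of a divisor over a curve.} Once $C\subset E$ with $f(E)=\Sigma$ a curve, the restriction $f|_E\colon E\to\Sigma$ is proper with one-dimensional fibers. Its general fiber is a union of smooth rational curves, by Lemma~\ref{lem:rational-fibers}, matching the $\Gamma_\alpha$-orbit of exceptional curves of the transverse Du Val resolution. I would consider the irreducible component of the Douady space of $E$ containing $[C]$, or more simply Stein-factorize $E\to\Sigma'\to\Sigma$. The components of the fibers of $E\to\Sigma'$ then form an analytic family over a finite cover of $\Sigma$. Taking the component through $C$, upper semicontinuity and Lemma~\ref{lem:rational-fibers} give smooth rational curves $C_s$ with $C_0=C$. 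Their images $f(C_s)$ sweep out $\Sigma$, so they are not all equal. If $C$ is a special component of a reducible special fiber, I would use $\dim E=2$: the curves through the general points of $C$ lie on a family covering $E$, and Douady limits at $C$ contain $C$ with multiplicity. Rigidity of $\PP^1$ together with $C\cdot$ (deformation classes) lets me pick the family whose limit is exactly $C$.
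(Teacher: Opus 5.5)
Your proposal has a genuine gap: it never proves that an arbitrary contracted curve moves, and the route through exceptional divisors cannot prove it. You leave the case of a curve outside every exceptional prime divisor open yourself. The fallback of appealing to Theorem~\ref{thm:main} is circular, because the deformation assertion there comes from Corollary~\ref{cor:generic-curves}, which is derived from this lemma.

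The divisor case fails as well. Your case split assumes that if $f(C)$ lies on a singular curve, then $C$ lies in an exceptional divisor over that curve. Your final step assumes that a special component of a reducible fiber of $E\to\Sigma$ is a Douady limit of irreducible curves. Both assumptions fail for crepant resolutions in general. Take the germ $xy=z^2(z-s)$, which is transversally $A_1$ along the $s$-axis and $A_2$ at the origin. Blowing up $(x,z)$ gives a crepant partial resolution whose exceptional divisor is ruled over the $s$-axis, with one ordinary double point on the fiber over the origin. Resolving that point by either small resolution gives a crepant resolution whose fiber over the origin contains a curve with normal bundle $\OO(-1)^{\oplus2}$. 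For one choice this curve lies in no exceptional divisor. For the other choice (the Atiyah flop) it is a $(-1)$-curve component of a reducible fiber of $E\to\Sigma$. In both cases $H^0(N_{C/X_0})=0$, so the curve does not deform in $X_0$ at all. This configuration has every property established before the lemma: crepancy, exceptional divisors over curves, and smooth rational fibers. It is also invisible to the $b_2$ versus $\Ib_2$ count, as you correctly observe. So the lemma cannot follow from the local structure of $f$; the hypothesis on the punctured fibers must be used again.

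The missing idea is to deform $C$ in the fourfold $\mathcal X$, not in $X_0$ or in $E$. In $X_0$ one has $\chi(N_{C/X_0})=0$, which gives no lower bound. In $\mathcal X$, however, $K_{\mathcal X}|_{X_0}\simeq K_{X_0}$ and $K_{X_0}\cdot C=0$ give $\deg N_{C/\mathcal X}=-2$ with rank three. Hence $\chi(N_{C/\mathcal X})=1$ and $\dim_{[C]}\Douady(\mathcal X)\ge1$. Each member $C_s$ of a nonconstant curve germ lies in a single fiber $X_{t(s)}$ and satisfies $K_{\mathcal X/\Delta}\cdot C_s=0$. Ampleness of $K_{X_t}$ for $t\ne0$ then forces $C_s\subset X_0$. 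Next, $mK_{X_0}=f^*(mK_{Y_0})$ shows that $f$ contracts each $C_s$. Finally, $f^{-1}(y)$ is one-dimensional with finitely many components, so the points $f(C_s)$ cannot all coincide. This single argument handles floating curves and special fiber components uniformly. For a $(-1,-1)$-curve it shows that such a curve cannot occur under the hypotheses, which your case analysis could not reach.
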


\begin{proof}
By Lemma~\ref{lem:rational-fibers}, \(C\simeq\mathbb P^1\), and
by crepancy \(K_{X_0}\cdot C=0\).
Since \(X_0\) is a fiber of a submersion over a disc,
\[
  K_{\mathcal X}|_{X_0}\simeq K_{X_0}.
\]
The normal bundle of $C$ in the fourfold \(\mathcal X\) has
rank three and degree
\[
  \deg N_{C/\mathcal X}
   =\deg(T_{\mathcal X}|_C)-\deg T_C
   =-K_{\mathcal X}\cdot C-2=-2.
\]
Riemann--Roch on \(\mathbb P^1\) gives
\begin{equation}\label{eq:normal-euler}
  \chi(N_{C/\mathcal X})=3-2=1.
\end{equation}
Embedded deformation theory has tangent space
\(H^0(C,N_{C/\mathcal X})\) and obstruction space
\(H^1(C,N_{C/\mathcal X})\); see \cite[Section~5, Theorem~5.2 and its proof]{Manetti2007}. This local calculation depends only on a neighborhood of the compact curve $C$, so it applies to the total space $\mathcal X$.
Consequently,
\begin{equation}\label{eq:douady-dimension}
  \dim_{[C]}\Douady(\mathcal X)
       \ge h^0(N_{C/\mathcal X})-h^1(N_{C/\mathcal X})=1.
\end{equation}
This is a lower bound on the dimension of the analytic germ,
and not an assertion that its obstruction space vanishes.
Choose a nonconstant reduced curve germ in that Douady space
through \([C]\), and restrict to a sufficiently small
neighborhood so that its members \(C_s\) are smooth connected
rational curves.

Every holomorphic map from a compact connected curve to
\(\Delta\) is constant. Hence each \(C_s\) lies in a single
fiber \(X_{t(s)}\), and the degree of the relative canonical bundle
is constant in this family:
\[
  K_{\mathcal X/\Delta}\cdot C_s=0.
\]
If \(t(s)\ne0\), ampleness of \(K_{X_{t(s)}}\) contradicts this
equality. Therefore \(C_s\subset X_0\) for every \(s\).

Since \(mK_{X_0}=f^*(mK_{Y_0})\) and \(mK_{Y_0}\) is ample, the equality
\(K_{X_0}\cdot C_s=0\) implies that \(f(C_s)\) is a point.
If all these points were the same point \(y\), the curves
\(C_s\) would be irreducible components of the fixed
one-dimensional fiber \(f^{-1}(y)\).
There are only finitely many such components, which is
incompatible with the nonconstant family of reduced embedded
curves. Thus the image points vary.
\end{proof}

Let \(Z\subset Y_0\) be the set where \(f\) is not an isomorphism,
which has dimension at most one. After discarding the finitely many exceptional and intersection points
of these image curves, there is a finite subset
\(\Sigma\subset Z\) such that each one-dimensional stratum of
\(Z\setminus\Sigma\) is smooth and the transverse singularity
type is a fixed Du Val type. Over such a stratum, \(f\)
induces the minimal resolution on a general transverse
surface section. To see this, choose a local coordinate along the smooth image curve
and extend it to the ambient germ of $Y_0$. Generic smoothness applied
to its pullback on the smooth space $X_0$ makes the corresponding
general surface section smooth after the exceptional parameter values
are removed, while adjunction and $K_{X_0}=f^*K_{Y_0}$ make its resolution crepant.
A crepant resolution of a Du Val surface is the minimal resolution.
We enlarge $\Sigma$ to include the discarded points.

\begin{corollary}\label{cor:generic-curves}
Every curve contracted by \(f\) can be deformed, through smooth
rational curves in \(X_0\) contracted by \(f\), to an irreducible
component of a fiber over \(Z\setminus\Sigma\).
\end{corollary}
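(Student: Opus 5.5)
The idea is to use Lemma~\ref{lem:deform} repeatedly, together with the finiteness of the exceptional data over $\Sigma$.

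Let $C\subset X_0$ be an irreducible curve contracted by $f$, and put $y_0=f(C)$. If $y_0\in Z\setminus\Sigma$, there is nothing to prove, since $C$ is already a component of the fiber $f^{-1}(y_0)$. So assume $y_0\in\Sigma$.

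\emph{Step 1: the relevant Douady component.} Let $\mathcal D$ be the union of the irreducible components of $\Douady(\mathcal X)$ through $[C]$. By the argument of Lemma~\ref{lem:deform}, every nearby member lies in $X_0$, is a smooth rational curve, and is contracted by $f$. Contracted curves lie in $f^{-1}(Z)$, and $f^{-1}(Z)$ is a compact analytic set of dimension at most two, since its components are exceptional divisors or curves. Hence the relevant component of the Douady space of $f^{-1}(Z)$ is compact.

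\emph{Step 2: the image-point map.} On this compact component, $s\mapsto f(C_s)$ is a holomorphic map to $Z$. Lemma~\ref{lem:deform} shows that it is nonconstant near $[C]$. Choose an irreducible curve germ $\gamma$ through $[C]$ on which it is nonconstant. Its image is then a germ of a one-dimensional component of $Z$ through $y_0$. The preimage of the finite set $\Sigma$ in $\gamma$ is discrete. Therefore points $s\in\gamma$ arbitrarily close to $[C]$ have $f(C_s)\in Z\setminus\Sigma$.

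\emph{Step 3: membership in the fiber.} For such $s$, the curve $C_s$ is smooth, rational and contracted, as in Lemma~\ref{lem:deform}. It is irreducible and lies in the one-dimensional fiber $f^{-1}(f(C_s))$ by Proposition~\ref{prop:fiber-dimension}. Hence $C_s$ is an irreducible component of that fiber.

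Reducedness is the delicate point. The Douady members near $[C]$ are smooth, hence reduced, because smoothness is an open condition in a flat family. So the deformation passes through smooth rational curves all along the path, as required.

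The main obstacle I expect is Step 2: making sure the image-point map is genuinely nonconstant on a curve germ, rather than merely on the germ of the Douady space. This follows from Lemma~\ref{lem:deform}, because a nonconstant reduced analytic germ whose image is not a single point contains a curve germ along which the map is nonconstant. If needed, I would take the irreducible component of the germ on which the map is nonconstant and apply curve selection to it.
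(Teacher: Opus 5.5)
Your argument is correct and is essentially the paper's proof. Lemma~\ref{lem:deform} supplies a family of smooth rational contracted curves whose image points vary, so these points cannot stay in the finite set $\Sigma$, and for a general small parameter $C_s$ is an irreducible component of a fiber over $Z\setminus\Sigma$. Your Step~1 is never used: only the germ at $[C]$ matters, and Lemma~\ref{lem:deform} already gives a curve germ along which $s\mapsto f(C_s)$ is nonconstant, so the compactness claim can be dropped (it would in any case need a Fujiki/class~C-type input).
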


\begin{proof}
Apply Lemma~\ref{lem:deform}. The varying points \(f(C_s)\)
cannot remain in the finite set \(\Sigma\), so for a general small parameter, \(f(C_s)\) lies in
\(Z\setminus\Sigma\), where \(C_s\) is one of the transverse
Du Val exceptional curves.
\end{proof}

\subsection{A line bundle positive on all contracted curves}\label{sec:relative-polarization}

\begin{proposition}\label{prop:positive-line}
There exists a line bundle \(L\) on \(X_0\) such that
\begin{equation}\label{eq:strict-relative-positive}
  L\cdot C>0
\end{equation}
for every irreducible curve \(C\) contracted by \(f\).
\end{proposition}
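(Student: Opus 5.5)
The plan is to build $L$ from the exceptional prime divisors of $f$, using an inverse-Cartan-matrix combination that is invariant under the Dynkin-diagram monodromy, as announced in the introduction.

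Let $E_1,\dots,E_k$ be the $f$-exceptional prime divisors. By Proposition~\ref{prop:fiber-dimension}, each $E_i$ maps onto an irreducible curve in $Z$. Over a one-dimensional stratum $Z_\alpha\subset Z\setminus\Sigma$ with transverse Du Val type $\Phi_\alpha$, a general transverse surface section $T$ is resolved minimally by $f$. Each $E_i$ over $Z_\alpha$ meets the resolved section $\widetilde T$ in a $\Gamma_\alpha$-orbit of exceptional $(-2)$-curves. Since $\widetilde T$ meets $E_j$ transversally, for an exceptional curve $C\subset \widetilde T$ we have
\[
 E_j\cdot_{X_0}C=E_j|_{\widetilde T}\cdot_{\widetilde T}C .
\]
So the matrix $\bigl(E_j\cdot C_\beta\bigr)$ over simple roots $\beta\in\Pi_\alpha$ is the Cartan matrix (with sign $-A_\alpha$) summed along $\Gamma_\alpha$-orbits. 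Divisors over other strata meet a general fiber over $Z_\alpha$ trivially, because the fiber is disjoint from them.

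For each $\alpha$, choose the vector $x_\alpha=-A_\alpha^{-1}\mathbf 1$, with coordinates indexed by $\Pi_\alpha$. Then $A_\alpha x_\alpha=-\mathbf 1$, and all entries of $A_\alpha^{-1}$ are positive, so $x_\alpha$ has negative entries. Because $A_\alpha$ and $\mathbf 1$ are $\Aut(\Pi_\alpha)$-invariant, $x_\alpha$ is constant on $\Gamma_\alpha$-orbits. It therefore descends to rational coefficients $x_{\alpha,i}$ on the divisors $E_i$ over $Z_\alpha$. Put
\[
 L=\sum_{\alpha}\sum_{E_i\to\overline Z_\alpha} N\,x_{\alpha,i}\,E_i ,
\]
with $N$ clearing denominators. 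On a general fiber component $C_\beta$ over $Z_\alpha$, this gives $L\cdot C_\beta=-N\sum_{\gamma}x_\gamma(C_\gamma\cdot C_\beta)=N>0$. Here one must check that the orbit-summed intersection numbers agree with the root-by-root computation. This holds because $E_i|_{\widetilde T}$ is exactly the sum of curves in its orbit, and it is where $\Gamma_\alpha$-invariance of $x_\alpha$ is used.

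To pass to all contracted curves, apply Corollary~\ref{cor:generic-curves}. Any contracted $C$ deforms, through contracted smooth rational curves in $X_0$, to a component $C'$ of a fiber over $Z\setminus\Sigma$. The degree $L\cdot C_s$ is locally constant in a connected flat family of curves, since $\langle c_1(L),[C_s]\rangle$ is topological. Hence $L\cdot C=L\cdot C'>0$, provided $C'$ is a component of a general fiber over some $Z_\alpha$. To arrange this, note that the fiber components over $Z_\alpha\setminus\Sigma$ all lie in one stratum-wise analytically trivial family (Lemma~\ref{lem:ADE-transverse-nearby}), so they are deformation-equivalent to the general ones. The deformation family of Lemma~\ref{lem:deform} is a connected curve germ, so the degree is constant along it, including at its special member $C$.

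The main obstacle I expect is the claim that every component of a fiber over a point of $Z_\alpha\setminus\Sigma$ is a transverse $(-2)$-curve lying in a single $E_i$. A related worry is that the family from Lemma~\ref{lem:deform} might land at a fiber component that is a sum of roots rather than an irreducible simple root. To handle this, I would first shrink $Z_\alpha$ so that $f^{-1}(Z_\alpha)\to Z_\alpha$ is locally the product of the minimal resolution with $Z_\alpha$. I would obtain this product structure by lifting the analytic triviality of Lemma~\ref{lem:ADE-transverse-nearby} uniquely to minimal resolutions, which are unique for crepant resolutions of Du Val germs. With that in place, the irreducible fiber components are exactly the simple-root curves.
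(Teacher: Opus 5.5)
Your proposal is correct and is essentially the paper's proof. Your divisor with coefficients from $x_\alpha=-A_\alpha^{-1}\mathbf 1$ is the paper's $\mathcal O_{X_0}(-NG)$ built from $a=A^{-1}\mathbf 1$, with the same orbit-invariance check and the same passage to arbitrary contracted curves via Corollary~\ref{cor:generic-curves} and deformation invariance of degree. The paper settles your worry about fiber components by enlarging $\Sigma$, using generic smoothness, so that every transverse section over $Z\setminus\Sigma$ is smooth and hence the minimal resolution; one small slip is that the middle expression in your degree computation should read $N\sum_\gamma x_\gamma(C_\gamma\cdot C_\beta)=-N(A_\alpha x_\alpha)_\beta=N$, without the leading minus sign.
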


\begin{proof}
If \(f\) contracts no curves, the assertion is vacuous.
Otherwise let \(E_1,\ldots,E_r\) be its exceptional prime
divisors. Every contracted curve deforms to a general fiber
component by Corollary~\ref{cor:generic-curves}; in particular,
if there were no exceptional divisors, there could be no
contracted curves.

Fix one connected smooth stratum \(S\subset Z\setminus\Sigma\).
On a simply connected local cover of \(S\), label the
exceptional curves in a transverse resolution by
\[
  C_1,\ldots,C_\ell.
\]
Their intersection matrix on that smooth surface is
\[
  (C_i\cdot C_j)_{i,j}=-A,
\]
where \(A\) is the Cartan matrix of the associated ADE
root system. The matrix \(A\) is invertible and
\(A^{-1}\mathbf1\) has positive rational entries.
Put
\begin{equation}\label{eq:cartan-weights}
  a=A^{-1}\mathbf1,\qquad
  \mathbf1=(1,\ldots,1)^{\mathsf T}.
\end{equation}

Monodromy along \(S\) permutes the curves by Dynkin diagram
automorphisms. Such permutations preserve \(A\) and
\(\mathbf1\), and therefore preserve the unique vector \(a\)
in \eqref{eq:cartan-weights}.
Each exceptional prime divisor dominating \(\overline S\)
corresponds to an orbit of this permutation action.
We may consequently assign to that global divisor the common
coefficient \(a_i\) of its orbit.

Perform this construction for each irreducible curve in the
exceptional image. It defines a rational divisor
\[
  G=\sum_{i=1}^r\gamma_iE_i
\]
on \(X_0\). On a general transverse surface over any of the
chosen strata, its restriction is \(\sum_i a_iC_i\).
Thus, for every general fiber component \(C_j\),
\[
  (-G)\cdot C_j
     =-\sum_i a_i(C_i\cdot C_j)
     =\sum_i a_iA_{ij}=1.
\]
Exceptional divisors lying over different image curves do not
meet these general fibers.

Choose a positive integer \(N\) clearing all denominators of
the coefficients of \(G\), and set
\[
  L=\mathcal O_{X_0}(-NG).
\]
The divisor \(NG\) is Cartier because \(X_0\) is smooth, and
\[
  L\cdot C_j=N>0
\]
on every general contracted fiber component.

Finally let \(C\) be any contracted irreducible curve.
By Corollary~\ref{cor:generic-curves}, it belongs to a local
smooth family \(C_s\) whose general member is such a fiber
component. The degree of the fixed line bundle \(L\) is
constant in that family, so
\[
  L\cdot C=L\cdot C_s=N>0
\]
for a general small \(s\). This proves \eqref{eq:strict-relative-positive}.
\end{proof}

\subsection{Projectivity}\label{sec:projectivity}

We use the following criterion in its existence form.

\begin{theorem}[
{\cite[Corollary 3.4]{VillalobosPaz2021}}]\label{thm:criterion}
Let \(\psi:V\to T\) be a proper morphism of normal algebraic
spaces of finite type over a field of characteristic zero,
and suppose that \(V\) has \(\mathbb Q\)-factorial klt
singularities. Then \(\psi\) is projective if and only if there
is a Cartier divisor \(L\) on \(V\) such that
\(L\cdot C>0\) for every irreducible curve contracted by
\(\psi\).
The criterion does not assert that this particular \(L\)
is relatively ample in arbitrary relative dimension.
\end{theorem}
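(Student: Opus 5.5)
The implication from projectivity to the existence of $L$ is immediate: if $\psi$ is projective, a $\psi$-ample Cartier divisor has positive degree on every curve contracted by $\psi$, by the projection formula \eqref{eq:projection-formula} applied to a relatively very ample multiple. All the work is in the converse. The obstacle is that positivity on curves is weaker than relative ampleness, since Kleiman's criterion requires positivity on the closure of the relative cone of curves. The plan is to get around this by running a minimal model program over $T$ on a projective model of $V$. The cone and contraction theorems then reduce the required positivity to extremal rays, and these rays are spanned by actual contracted curves.

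\emph{First step.} I choose, by Chow's lemma for algebraic spaces followed by resolution, a modification $h\colon V'\to V$ with $V'$ smooth and projective over $T$, and I fix a $\psi\circ h$-ample divisor $H'$ on $V'$. Because $V$ is $\Q$-factorial, $H:=h_*H'$ is $\Q$-Cartier. The pullback $h^*H-H'$ is $h$-exceptional and $h$-nef up to sign, so by the negativity lemma \eqref{eq:negativity-prelim} we have $h^*H=H'+E$ with $E\ge0$ exceptional. Since $V$ is klt, I pick a boundary $\Delta'$ on $V'$ with $(V',\Delta')$ klt and
\[
 K_{V'}+\Delta'\ \sim_{\Q,T}\ h^*\bigl(K_V+\varepsilon H+kL\bigr)+F,
\]
where $F\ge0$ is $h$-exceptional with support equal to all of $\Exc h$, $\varepsilon>0$ is small and $k\gg0$. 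The standard device is to absorb the exceptional discrepancies into $\Delta'$ and to add a small multiple of $H'$ to make the boundary big over $T$.

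\emph{Second step.} I run the $(K_{V'}+\Delta')$-MMP over $T$ with scaling. This MMP lives in the projective category over $T$, because $V'\to T$ is projective, and it terminates in a good minimal model since the boundary is big over $T$ (BCHM in the relative setting). The key claim is that every divisor contracted in this MMP is a component of $F$. Moreover, on the output $V''$, the divisor $K_{V''}+\Delta''$ is the pushforward of $h^*(K_V+\varepsilon H+kL)$, and its ample model over $T$ recovers $V$. Once this is known, $V\simeq\Proj_T$ of a finitely generated relative section ring, so $\psi$ is projective.

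\emph{Main obstacle.} The hardest point is to show that $K_V+\varepsilon H+kL$ behaves like a $\psi$-ample class with respect to all the curves the MMP can see, using only the curvewise positivity $L\cdot C>0$. I would first try the following. By the cone theorem applied to the klt pair $(V,\Delta_V)$ over $T$ (through its $\Q$-factorial klt structure on algebraic spaces), every $(K_V+\varepsilon H)$-negative extremal ray over $T$ is spanned by a rational curve $C$ with $-(K_V+\varepsilon H)\cdot C\le 2\dim V$. On such a curve $L\cdot C\ge1$, because $L$ is Cartier and positive on $C$. Hence for $k>2\dim V$ the divisor $K_V+\varepsilon H+kL$ is positive on every such ray. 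The remaining rays are $(K_V+\varepsilon H)$-nonnegative, and $L$ is nef on them, so the sum is positive there as well. The delicate part is transferring this positivity to the relative cone of $V'$ and to the intermediate models: one must show that the negative part is exactly the exceptional divisor $F$, so that each step of the MMP contracts a component of $F$ or flips a curve lying over a point of $V$. This is where $\Q$-factoriality of $V$ enters essentially.
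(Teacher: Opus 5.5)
The paper does not prove this criterion; it imports it from Villalobos-Paz. Your proposal therefore has to stand on its own, and as written it has a genuine gap exactly where the theorem has content.

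In your ``main obstacle'' paragraph you apply the cone theorem, with Kawamata's length bound, to $(V,\varepsilon H)$ over $T$. The cone and contraction theorems are theorems about projective morphisms: their proofs use a relatively ample divisor, and for a proper non-projective $V\to T$ the cone $\overline{NE}(V/T)$ need not even be strictly convex. Using them on $V$ presupposes what you are trying to prove.

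Even granting a cone decomposition, two further steps fail.
\begin{enumerate}
\item On the $(K_V+\varepsilon H)$-nonnegative part you only get $(K_V+\varepsilon H+kL)\cdot z\ge0$, not $>0$, since $L$ is only known to be positive on actual curves, not on limit classes.
\item Passing from positivity on $\overline{NE}(V/T)$ to $\psi$-ampleness is Kleiman's criterion, which again requires projectivity.
\end{enumerate}
Likewise, your claim that the ample model of $K_{V''}+\Delta''$ over $T$ is $V$ is equivalent to the $\psi$-ampleness of $K_V+\varepsilon H+kL$. The key claim, that the MMP only contracts components of $F$ or flips curves lying over points of $V$, is left unproved.

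The missing idea is to run the length argument on the projective models rather than on $V$. Put $N=h^*L$. By the projection formula and the hypothesis, $N$ is a Cartier divisor on $V'$, nef over $T$, with $N\cdot C'=0$ exactly for the curves $C'$ contracted by $h$. Your construction of $\Delta'$ silently needs this too: it makes $kN+\varepsilon H'$ ample over $T$, so $kh^*L$ can be absorbed into a klt boundary.

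Write $K_{V'}+\Delta'\sim_{\Q,T}K_{V'}+\Theta'+kN$, where $K_{V'}+\Theta'\sim_{\Q}h^*(K_V+\varepsilon H)+F$, $(V',\Theta')$ is klt, and $k>2\dim V$. A $(K_{V'}+\Delta')$-negative extremal ray of $\overline{NE}(V'/T)$ is $(K_{V'}+\Theta')$-negative. The cone theorem on the projective $V'$ then spans it by a rational curve $C$ with $-(K_{V'}+\Theta')\cdot C\le 2\dim V$. This forces the nonnegative integer $N\cdot C$ to vanish.

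Hence every curve in the ray is $h$-contracted. By rigidity the contraction and the flip map to $V$ and stay projective over $T$. The pullback of $L$ to the new model has the same properties, and each step is also $(K+\Theta)$-negative, so klt is preserved and the argument repeats. The whole MMP with scaling, which terminates since $\Delta'$ is big over $T$, therefore takes place over $V$:
\begin{enumerate}
\item every ray is $F$-negative;
\item a Mori fibre space cannot occur, since its fibres would be contracted by the birational map to $V$;
\item at termination the negativity lemma gives $F''=0$.
\end{enumerate}
Since $\Supp F=\Exc h$, the output is a small birational morphism $\phi\colon V''\to V$ with $V''$ projective over $T$.

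Finally, $\Q$-factoriality of $V$ closes the argument. For a $T$-ample $A$ on $V''$, $\phi_*A$ is $\Q$-Cartier and $\phi^*\phi_*A=A$, so $\phi$ contracts no curve and is an isomorphism. No ample model and no cone theorem on $V$ are needed.
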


\begin{theorem}
\label{thm:main}
Let $\pi\colon\mathcal X\to\Delta$ be a proper holomorphic submersion
with connected compact three-dimensional fibers. If $K_{X_t}$ is ample
for every $t\ne0$, then $X_0$ is projective and $K_{X_0}$ is big and
semiample. Its canonical morphism
\[
 f\colon X_0\longrightarrow Y_0=\Proj R(X_0,K_{X_0})
\]
is crepant, all its fibers have dimension at most one, and every
exceptional prime divisor maps onto a curve. Every irreducible
contracted curve is a smooth rational curve and deforms in $X_0$ to a
general transverse Du Val exceptional curve.
\end{theorem}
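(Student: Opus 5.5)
The proof of Theorem~\ref{thm:main} assembles the results of Sections~\ref{sec:bigness}--\ref{sec:relative-polarization}; the one remaining step is the application of Theorem~\ref{thm:criterion}. We proceed in four steps.

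\emph{Step 1: the canonical model and semiampleness.} Proposition~\ref{prop:bigness} gives that $K_{X_0}$ is big and $X_0$ is Moishezon. Proposition~\ref{prop:relative-model} and Lemma~\ref{lem:purity} give the flat projective relative canonical model with pure limiting $H^2$. Theorem~\ref{thm:main-divisorial} then removes the divisorial part of the stable base locus and gives $c=c_2(Y_0)$. Proposition~\ref{prop:semiample} removes the remaining base locus of dimension at most one. Hence $|mK_{X_0}|$ is base point free, so $K_{X_0}$ is semiample, and the canonical morphism $f\colon X_0\to Y_0$ satisfies $K_{X_0}=f^*K_{Y_0}$, so $f$ is crepant.

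\emph{Step 2: the structure of $f$.} Proposition~\ref{prop:fiber-dimension} shows that every exceptional prime divisor maps onto a curve, and hence that every fiber has dimension at most one. Lemma~\ref{lem:rational-fibers} shows that every contracted irreducible curve is a smooth rational curve. Corollary~\ref{cor:generic-curves} shows that each such curve deforms, through contracted curves, to a component of a fiber over $Z\setminus\Sigma$. Over such a stratum, $f$ restricts on a general transverse slice to the minimal resolution of a Du Val singularity, so these components are transverse Du Val exceptional curves. This proves all the structural assertions of the theorem.

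\emph{Step 3: projectivity.} We apply Theorem~\ref{thm:criterion} to $\psi=f$ with $V=X_0$ and $T=Y_0$. First we must justify the algebraic-space hypotheses. $X_0$ is a compact Moishezon manifold, so by Artin's theorem it is the analytification of a smooth proper algebraic space. $Y_0$ is projective. The morphism $f$ is algebraic because a holomorphic map between analytifications of proper algebraic spaces is algebraic, by GAGA for Moishezon spaces. Since $X_0$ is smooth, it is $\Q$-factorial and klt. Proposition~\ref{prop:positive-line} provides a Cartier divisor $L$ with $L\cdot C>0$ on every contracted irreducible curve. Since $L=\OO_{X_0}(-NG)$ is built from the divisors $E_i$, it is algebraic. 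The criterion therefore makes $f$ projective. Because $Y_0$ is projective, the composite $X_0\to Y_0\to\mathrm{pt}$ is projective, and explicitly $f^*\OO_{Y_0}(k)\otimes L$ is ample for $k\gg0$. Hence $X_0$ is projective, and in particular K\"ahler.

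\emph{Main obstacle.} All the substantive geometry has already been done, so the step I expect to be most delicate is the analytic-to-algebraic translation in Step 3. There we must check two things. First, that curve degrees in \eqref{eq:curve-degree} agree with the algebraic intersection numbers used in \cite{VillalobosPaz2021}; both equal $\deg\nu^*(L|_C)$. Second, that every analytic curve contracted by $f$ is algebraic. This holds because each such curve is contained in a fiber of $f$, and the fibers are algebraic subspaces.
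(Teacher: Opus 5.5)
Your proposal is correct and follows the paper's own proof essentially step for step. The structural assertions are read off from Propositions~\ref{prop:semiample} and~\ref{prop:fiber-dimension}, Lemma~\ref{lem:rational-fibers} and Corollary~\ref{cor:generic-curves}, and projectivity comes from regarding the Moishezon manifold $X_0$ as a proper algebraic space, transporting $f$ and the line bundle of Proposition~\ref{prop:positive-line} by GAGA, applying Theorem~\ref{thm:criterion}, and composing with the projective $Y_0$.

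One side remark needs a justification you did not give. Your claim that $f^*\OO_{Y_0}(k)\otimes L$ is ample for $k\gg0$ does not follow from Theorem~\ref{thm:criterion}, which only asserts that $f$ is projective, not that this particular $L$ is $f$-ample. The claim is nevertheless true here: the fibers of $f$ have dimension at most one, so $L$ has positive degree on every component of every fiber, and hence $L$ is $f$-ample once $f$ is known to be projective.
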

\begin{proof}
Proposition~\ref{prop:semiample} gives the semiample canonical
bundle and the crepant canonical morphism
\[
  f:X_0\longrightarrow Y_0.
\]
Proposition~\ref{prop:positive-line} constructs a Cartier
divisor positive on every irreducible curve contracted by
\(f\).

A compact Moishezon space has the structure of a proper
algebraic space over \(\mathbb C\), and GAGA identifies the
morphism \(f\) and the line bundle just constructed with
algebraic ones; see \cite[Introduction, first paragraph]{VillalobosPaz2021} for the algebraic-space interpretation of Moishezon spaces.
The source \(X_0\) is smooth, hence \(\mathbb Q\)-factorial
and klt, so Theorem~\ref{thm:criterion} implies that
\(f\) is projective.
Since $Y_0$ is projective, $X_0$ is projective. Bigness was proved in
Proposition~\ref{prop:bigness}; semiampleness and crepancy are
Proposition~\ref{prop:semiample}. Proposition~\ref{prop:fiber-dimension},
Lemma~\ref{lem:rational-fibers}, and
Corollary~\ref{cor:generic-curves} supply the remaining assertions
of the theorem.
\end{proof}

\section{Counterexamples in dimension at least four}\label{sec:quartic}

\subsection{Choosing the pencil}
Let $\Gr(2,4)$ be the complex Grassmannian of complex planes in the vector space $\mathbb{C}^4$.
\begin{lemma}\label{lem:pencil}
There is a smooth quartic surface $S_0\subset\PP^3$ containing exactly one
line $\ell$, and there is a quartic polynomial $F_1$ such that the pencil
\[
 S_t=\{F_0+tF_1=0\}\subset\PP^3
\]
has the following properties after restriction to a sufficiently small
disc $\Delta$:
\begin{enumerate}
\item every $S_t$ is smooth;
\item $S_t$ contains no line for $t\ne0$;
\item after locally trivializing the universal line over $G=\Gr(2,4)$ near
$[\ell]$, the map
\begin{equation}\label{eq:coefficient-map}
 J:(G\times\Delta,([\ell],0))\longrightarrow
 (H^0(\PP^1,\OO_{\PP^1}(4)),0),\qquad
 (L,t)\longmapsto(F_0+tF_1)|_L
\end{equation}
is a local biholomorphism.
\end{enumerate}
\end{lemma}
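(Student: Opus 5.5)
Fix the line $\ell=\{x_2=x_3=0\}$ and first produce a smooth quartic containing $\ell$ and no other line. Quartics containing $\ell$ are $F_0=x_2A+x_3B$ with $A,B$ cubics. Dimension counts in the incidence correspondence $\{(S,L,L'):L\ne L'\subset S\}$ show that a general such quartic contains no second line. Indeed, the quartics containing $\ell$ and a second line $L'$ form a family of codimension at least $1$ in the linear system of quartics through $\ell$. This holds whether $L'$ meets $\ell$ or not: imposing $L'$ costs $5$ conditions, or $4$ when $L'$ meets $\ell$, while the lines $L'$ vary in a family of dimension $4$, respectively $3$. One must check that these conditions are independent, e.g.\ by exhibiting one explicit quartic. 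Smoothness of a general member follows from Bertini, since the base locus of this system is $\ell$ and smoothness along $\ell$ can be checked on a single example (for instance $x_2x_0^3+x_3x_1^3+x_2^4+x_3^4$ is smooth along $\ell$). If desired, one can instead cite the known fact that a very general quartic containing a line has Picard number $2$, generated by $\ell$ and $H$, so it contains no further line.

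For item (3), trivialize the universal line near $[\ell]$ using the chart $L_M=\{(s,u,Ms+Nu)\}$, so $(s,u)$ parametrize $L$ with $M,N\in\C^2$ giving $4$ coordinates on $G$. Then $J(L,t)=F_0(s,u,Ms+Nu)+tF_1(s,u,Ms+Nu)$ is a map from a $5$-dimensional germ to the $5$-dimensional space $H^0(\PP^1,\OO(4))$, and $J([\ell],0)=0$. Its differential in the $G$-directions is the classical map $H^0(N_{\ell/\PP^3})=H^0(\OO(1)^2)\to H^0(\OO(4))$, $(\mu,\nu)\mapsto \mu A|_\ell+\nu B|_\ell$. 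Its kernel is $H^0(N_{\ell/S_0})=H^0(\OO(-2))=0$, so it is injective with image of codimension one. The $t$-derivative is $F_1|_\ell$, so it suffices to choose $F_1$ with $F_1|_\ell$ outside that $4$-dimensional image; a general quartic $F_1$ works. The differential is then an isomorphism, and the inverse function theorem gives (3).

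Item (1) follows after shrinking $\Delta$, because smoothness is open. For item (2), the set $\{(L,t):L\subset S_t\}$ is closed, so lines on $S_t$ for $t$ near $0$ must lie near lines of $S_0$. Near $[\ell]$, this set is $J^{-1}(0)$, which by the local biholomorphism is the single point $([\ell],0)$. Since $\ell$ is the only line on $S_0$, properness of the Grassmannian yields a neighbourhood argument. If lines $L_k\subset S_{t_k}$ existed with $t_k\to0$, a subsequence would converge to a line in $S_0$, hence to $\ell$, contradicting the local injectivity. Hence $S_t$ contains no line for small $t\ne0$.

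The main obstacle is the first step, rigorously establishing a smooth quartic with exactly one line. I would settle it by the dimension count together with one explicit verification, or else by the Picard-lattice argument cited from the literature.
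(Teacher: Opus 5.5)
Your proposal is correct and follows essentially the same route as the paper. The shared steps are: the dimension count in the incidence correspondence (5 conditions on a 4-dimensional family of skew lines, 4 conditions on a 3-dimensional family of meeting lines), Bertini plus a check along $\ell$ for smoothness of $S_0$, the identification of the $G$-differential of $J$ with the injective map $H^0(\OO_{\PP^1}(1)^{\oplus2})\to H^0(\OO_{\PP^1}(4))$, the choice of $F_1|_\ell$ outside its four-dimensional image, and the compactness-of-$G$ argument for (ii). One small correction: the independence of those conditions means surjectivity of restricting quartics through $\ell$ to quartics on $L'$ (vanishing at $\ell\cap L'$ in the meeting case), which by homogeneity you check on one skew and one meeting pair of lines, not by exhibiting a single quartic.
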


\begin{proof}
Fix a line $\ell\subset\PP^3$. Quartic hypersurfaces containing $\ell$
form a projective space of dimension $29$. For a second line $L$ skew to
$\ell$, the condition of also containing $L$ imposes five independent
linear conditions, while skew lines vary in a four-dimensional family;
for a second line meeting $\ell$ there are four independent conditions and
a three-dimensional family of choices of $L$. In either case the
incidence locus has dimension at most $28$. Thus a general quartic
containing $\ell$ contains no other line. Bertini gives smoothness away
from $\ell$, and smoothness along $\ell$ follows by choosing the two normal
cubic derivatives with no common zero. This gives $S_0$.

Adjunction on the K3 surface $S_0$ gives
$N_{\ell/S_0}\simeq\OO_{\PP^1}(-2)$, so the normal bundle sequence is
\begin{equation}\label{eq:normal-line}
 0\longrightarrow\OO_{\PP^1}(-2)
 \longrightarrow\OO_{\PP^1}(1)^{\oplus2}
 \longrightarrow\OO_{\PP^1}(4)\longrightarrow0.
\end{equation}
The differential in the $G$-direction of \eqref{eq:coefficient-map} is the
map on global sections induced by the last arrow. It is injective, with
four-dimensional image in a five-dimensional space. Choose $F_1$ so that
$F_1|_\ell$ lies outside that image; restriction of quartic polynomials to
$\ell$ is surjective. The full differential of $J$ is then an isomorphism between two
five-dimensional vector spaces, so the holomorphic inverse function
theorem proves (iii). Notice that the normal sequence has a genuinely
one-dimensional cokernel on global sections, since
$h^1(\OO_{\PP^1}(-2))=1$. Thus the transversality condition on $F_1$
is achievable without an additional genericity hypothesis.

Smoothness of $S_t$ holds after shrinking the disc. To prove (ii), suppose
that $t_\nu\to0$, $t_\nu\ne0$, and $S_{t_\nu}$ contains a line $L_\nu$.
Compactness of $G$ yields a limiting line on $S_0$, necessarily $\ell$.
But $J$ has a unique zero near $([\ell],0)$, contradicting the existence
of $(L_\nu,t_\nu)$. A further shrinking gives the assertion.
\end{proof}

Let $\mathcal S\to\Delta$ be this pencil and put
\[
 h:\mathcal Y=\Hilb^2(\mathcal S/\Delta)\longrightarrow\Delta.
\]
The relative Hilbert scheme is a smooth projective morphism of relative
dimension four, by the smoothness theorem for Hilbert schemes of points
on surfaces \cite[Theorems~2.4 and~2.9]{Fogarty}. Each $Y_t=S_t^{[2]}$ carries the holomorphic
symplectic form associated with the K3 surface $S_t$ \cite[Section~6, Proposition~5]{Beauville}; in
particular $K_{Y_t}\simeq\OO_{Y_t}$.

\subsection{The secant line bundle}

Every length-two subscheme $\xi\subset\PP^3$ spans a unique line, which for a
nonreduced subscheme is the line determined by its tangent direction.
The construction is algebraic: evaluation of linear forms on $\xi$ gives
a rank-two quotient of $H^0(\PP^3,\OO(1))$. Consequently it defines a
morphism
\begin{equation}\label{eq:secant-map}
 \Phi:\mathcal Y\longrightarrow G\times\Delta,
 \qquad \xi\longmapsto(\langle\xi\rangle,t).
\end{equation}
Write
\[
 H=\Phi^*\pr_G^*\OO_G(1),\qquad H_t=H|_{Y_t}.
\]

\begin{lemma}\label{lem:secant}
For $t\ne0$, the map $\Phi_t:Y_t\to G$ is finite of degree six, and $H_t$
is ample. The map $\Phi_0$ is generically finite of degree six, and its only
positive-dimensional fiber is
\[
 P=\ell^{[2]}\simeq\PP^2=\Phi_0^{-1}([\ell]).
\]
The line bundle $H_0$ is big and globally generated, and $H|_P$ is trivial.
Furthermore,
\begin{equation}\label{eq:normal-central}
 N_{P/Y_0}\simeq\Omega^1_P.
\end{equation}
\end{lemma}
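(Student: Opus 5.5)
The plan is to analyze the fibers of $\Phi_t$ directly. For a point $[L]\in G$, the fiber $\Phi_t^{-1}([L])$ consists of the length-two subschemes of $S_t$ that span $L$.

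\emph{Lines not contained in $S_t$.} If $L\not\subset S_t$, then $L\cap S_t$ is a length-four subscheme of $L$. Its length-two subschemes form a finite set, of cardinality $\binom42=6$ for a transverse line, and each spans $L$. The fiber is therefore finite.

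\emph{Lines contained in $S_t$.} If $L\subset S_t$, the fiber is $L^{[2]}\simeq\PP^2$. By Lemma~\ref{lem:pencil}(ii), no such line exists for $t\ne0$. Hence $\Phi_t$ is finite for $t\ne0$, and its only positive-dimensional fiber for $t=0$ is $P=\ell^{[2]}$, since $\ell$ is the unique line on $S_0$.

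\emph{Degree and positivity.} The degree is six in both cases, computed over a general line, which meets $S_t$ transversally in four points. Since $\OO_G(1)$ is ample and $\Phi_t$ is finite, $H_t=\Phi_t^*\OO_G(1)$ is ample for $t\ne0$. For $t=0$, $H_0$ is globally generated because it is a pullback of a globally generated bundle. It is big because $\Phi_0$ is generically finite onto the four-dimensional $G$, so $H_0^4=6\deg G>0$. Finally, $H|_P$ is trivial because $P$ maps to the point $[\ell]$.

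\emph{The normal bundle.} To obtain $N_{P/Y_0}\simeq\Omega^1_P$, I would use the holomorphic symplectic form $\sigma$ on $Y_0$. Restricted to $P$, it is a holomorphic $2$-form on $\PP^2$, and $H^0(\PP^2,\Omega^2)=0$, so $P$ is Lagrangian. Contraction with $\sigma$ then identifies $N_{P/Y_0}=T_{Y_0}|_P/T_P$ with $T_P^\perp$ dual to $T_P$. This gives $N_{P/Y_0}\simeq T_P^*=\Omega^1_P$.

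\emph{Main obstacle.} The hardest step, if one wants more than set-theoretic statements, is to check that $\Phi$ is indeed a morphism near nonreduced subschemes and that the Lagrangian identification is an isomorphism of bundles rather than merely fiberwise. I expect both to be routine: the first follows from the rank-two quotient description in \eqref{eq:secant-map}, the second from the nondegeneracy of $\sigma$. The degree count at $t=0$ also needs care, but only generic fibers matter for it, and a general line avoids $\ell$ and meets $S_0$ transversally.
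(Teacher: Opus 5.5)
Your proposal is correct and follows essentially the same route as the paper. Both arguments use the fiber analysis through degree-two subdivisors of $L\cap S_t$ (where quasi-finiteness together with properness of $\Phi_t$ gives finiteness), pull back the Pl\"ucker bundle for the positivity claims, and derive $N_{P/Y_0}\simeq\Omega^1_P$ from the fact that $P$ is Lagrangian for the symplectic form. Your nef-and-big computation $H_0^4=6\deg G>0$ is just a more explicit version of the paper's remark that bigness follows from generic finiteness.
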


\begin{proof}
If $L$ is not contained in $S_t$, then $S_t\cap L$ is a degree-four
effective divisor on $L\simeq\PP^1$, and it has only finitely many degree-two
subdivisors, which are exactly the points of $\Phi_t^{-1}([L])$. Thus
$\Phi_t$ is quasi-finite when $S_t$ contains no line, and properness makes
it finite. A general line meets $S_t$ in four distinct points, giving
$\binom42=6$ length-two subschemes. The same degree calculation applies to
$t=0$, where the only remaining fiber is
$\Hilb^2(\ell)=\operatorname{Sym}^2(\PP^1)=\PP^2$.

The assertions about $H_t$ follow by pulling back the Pl\"ucker line
bundle; in particular, the pullback is big because $\Phi_0$ is generically
finite. The restriction of the symplectic form on $Y_0$ to $P$ is zero,
since $H^0(P,\Omega^2_P)=0$. As $P$ has half the dimension of $Y_0$, it is
Lagrangian, and the symplectic form identifies its normal bundle with its
cotangent bundle. This proves \eqref{eq:normal-central}.
\end{proof}

\subsection{A local model for the total space}

Set
\[
 V=H^0(\PP^1,\OO_{\PP^1}(2)),\qquad
 W=H^0(\PP^1,\OO_{\PP^1}(4)),
\]
so that $\dim V=3$ and $\dim W=5$. Let
$\mu:V\otimes V\to W$ be multiplication of binary quadratic forms, and consider
\[
 A=\Tot\bigl(\OO_{\PP(V)}(-1)\otimes V\bigr).
\]
A point of $A$ is written as $([q],q\otimes r)$, with the usual
identification $(q,r)\sim(cq,c^{-1}r)$ for $c\ne0$.

\begin{proposition}\label{prop:local-model}
A neighborhood of $P$ in $\mathcal Y$ is isomorphic to a neighborhood
of the zero section in $A$. In these coordinates the composite with the
coefficient map $J$ is
\begin{equation}\label{eq:multiplication}
 ([q],q\otimes r)\longmapsto qr\in W,
\end{equation}
and $h$ takes the form
\begin{equation}\label{eq:local-time}
 t=\tau(qr),
\end{equation}
where $\tau:(W,0)\to(\Delta,0)$ is holomorphic. If
$\lambda=d\tau_0$, the bilinear form
\begin{equation}\label{eq:pairing}
 b(q,r)=\lambda(qr)
\end{equation}
on $V\times V$ is nondegenerate. In particular,
\begin{equation}\label{eq:normal-total}
 N_{P/\mathcal Y}\simeq\OO_{\PP^2}(-1)^{\oplus3}.
\end{equation}
\end{proposition}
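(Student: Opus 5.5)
The natural approach is to build an explicit holomorphic map from a neighborhood of the zero section of $A$ into $\mathcal Y$, and then compare it with the incidence description of $\mathcal Y$ coming from the secant morphism $\Phi$ and the coefficient map $J$ of Lemma~\ref{lem:pencil}.

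\emph{Step 1: parametrize length-two subschemes near $P$.} Near $[\ell]$, a length-two subscheme $\xi\in\mathcal Y$ is determined by its secant line $L=\langle\xi\rangle$ (a point of $G$ near $[\ell]$), together with a binary quadratic form $q$ on $L$, up to scalar, cutting out $\xi$. The condition $\xi\subset S_t$ says exactly that $q$ divides $(F_0+tF_1)|_L=J(L,t)$. Use the local trivialization of the universal line from Lemma~\ref{lem:pencil}(iii) to identify binary forms on $L$ with $V$ and $W$. Then a neighborhood of $P$ in $\mathcal Y$ is identified with
\[
 \{(L,t,[q],r) : J(L,t)=qr\}.
\]
Here $r\in V$ is the uniquely determined cofactor, with the scaling $(q,r)\sim(cq,c^{-1}r)$.

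\emph{Step 2: eliminate $(L,t)$.} Since $J$ is a local biholomorphism onto a neighborhood of $0\in W$, the pair $(L,t)$ is recovered holomorphically as $J^{-1}(qr)$. The map $([q],q\otimes r)\mapsto$ this subscheme is therefore a holomorphic bijection from a neighborhood of the zero section of $A$ onto a neighborhood of $P$. Its inverse is holomorphic: $[q]$ is the ideal of $\xi$ restricted to $\langle\xi\rangle$, and $r$ is the quotient $J/q$. Both sides are smooth of dimension five, so the bijection is a biholomorphism.

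This gives \eqref{eq:multiplication} by construction. Setting $\tau=\pr_\Delta\circ J^{-1}$ gives \eqref{eq:local-time}. The zero section $r=0$ corresponds to $J=0$, that is, $L=\ell$ and $t=0$, which recovers $P=\ell^{[2]}$. Finally, $N_{P/\mathcal Y}=\OO_{\PP(V)}(-1)\otimes V\simeq\OO_{\PP^2}(-1)^{\oplus3}$, which is \eqref{eq:normal-total}.

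\emph{Step 3: nondegeneracy of $b$.} This is the main obstacle. Here $\lambda=d\tau_0$ is a linear form on $W$ whose kernel is $dJ(T_{[\ell]}G\times 0)$, namely the four-dimensional image described in the proof of Lemma~\ref{lem:pencil}. That image is $H^0$ of $\OO(1)^{\oplus2}\to\OO(4)$, that is, quartics of the form $\ell_1$-cubic combinations vanishing to suitable order. I would identify it intrinsically: it is the image of $H^0(N_{\ell/\PP^3})$, so its annihilator is the class generating $H^1(\OO_{\PP^1}(-2))$ under Serre duality. Hence $\lambda$ is the apolarity (catalecticant) functional attached to a binary form dual to the normal data.

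The plan is then to show that $b(q,r)=\lambda(qr)$ is the $3\times3$ catalecticant of $\lambda$. Nondegeneracy of $b$ is equivalent to $\lambda$ not being annihilated by the product of any nonzero quadric, that is, to $\ker\lambda$ containing no subspace of the form $q\cdot V$. Suppose $q\cdot V\subset\ker\lambda$. Then $q\cdot V$ would be a three-dimensional subspace of the image of $dJ_G$. Via the normal sequence \eqref{eq:normal-line}, this would give a subspace of $H^0(\OO(1)^{\oplus2})$ mapping onto $q\cdot V$. I would derive a contradiction from the fact that the map $\OO(1)^{\oplus2}\to\OO(4)$ is given by the two normal cubic derivatives, which have no common zero (smoothness of $S_0$ along $\ell$).

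As a cross-check, the restriction of the form to $Y_0$ should reproduce \eqref{eq:normal-central}. Indeed, if $b$ is nondegenerate, then $Y_0=\{\lambda(qr)+\dots=0\}$ has normal bundle along $P$ equal to the kernel of $\OO(-1)\otimes V\to\OO$, $r\mapsto b(q,r)$. This kernel is $\Omega^1_{\PP^2}$, consistent with Lemma~\ref{lem:secant}. One could alternatively use this consistency, together with the Lagrangian identification, as the argument for nondegeneracy.
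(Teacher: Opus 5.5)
Your Steps 1 and 2 follow the paper's proof. These are the incidence condition $J(L,t)\in qV$, and the injectivity of multiplication by $q\neq0$, which makes the incidence space the total space $A$. Then come elimination of $(L,t)$ through $J^{-1}$ with $\tau=\pr_\Delta\circ J^{-1}$, and $N_{P/\mathcal Y}=\OO_{\PP(V)}(-1)\otimes V$. One small point: take the neighborhood in $A$ to be the preimage of a small ball around $0\in W$ under $([q],q\otimes r)\mapsto qr$. Compactness of $\PP(V)$ makes this a neighborhood of the zero section.

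\textbf{The gap is Step 3}, the step you yourself flag as the main obstacle. Your reduction is correct: nondegeneracy is equivalent to $qV\not\subset\ker\lambda$ for all $q\neq0$. The identity $\tau\circ J=\pr_\Delta$ does give $\ker\lambda=dJ(T_{[\ell]}G\times0)=\{ac_1+bc_2 : a,b\in H^0(\OO_{\PP^1}(1))\}$, where $c_1,c_2$ are the normal cubic derivatives. But the contradiction is only announced, and the catalecticant and Serre-duality remarks do not supply it.

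The paper avoids all of this. The map $h$ is a submersion at every point of $P$, because the relative Hilbert scheme is smooth over $\Delta$. At $([q],0)$, the differential of $t=\tau(qr)$ vanishes on tangent directions of the zero section and equals $r\mapsto\lambda(qr)=b(q,r)$ on the normal fiber. Hence $b(q,\cdot)\neq0$ for every $q\neq0$, so $V\to V^*$ is injective. Your cross-check paragraph already computes this differential, so invoking the submersion property finishes the proof. The Lagrangian identification is then unnecessary.

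If you prefer your explicit route, it closes as follows. Suppose $qV\subset\ker\lambda$, and let $U\subset H^0(\OO_{\PP^1}(1)^{\oplus2})$ be the preimage of $qV$. It is three-dimensional, since $H^0(\OO_{\PP^1}(-2))=0$ makes the map injective. For a root $x$ of $q$, $U$ lies in the kernel of $\phi_x(a,b)=a(x)c_1(x)+b(x)c_2(x)$. This functional is nonzero exactly because $(c_1(x),c_2(x))\neq0$, so $U=\ker\phi_x$.

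But $U$ is also annihilated by a second functional that does not vanish identically on $\ker\phi_x$:
\begin{itemize}
\item If $q$ has a second root $y\neq x$, use $\phi_y$. Take a linear form $a$ vanishing at $x$ but not at $y$. Then $\phi_y$ is nonzero on $(a,0)$ or on $(0,a)$, and both pairs lie in $\ker\phi_x$.
\item If $q$ has a double root at $x$, use the linear Taylor coefficient at $x$. In an affine coordinate $z$ centered at $x$, with $a=a_0+a_1z$ and $b=b_0+b_1z$, this coefficient is $a_1c_1(x)+b_1c_2(x)+a_0c_1'(x)+b_0c_2'(x)$. On $(z,0)$ and $(0,z)$, which lie in $\ker\phi_x$, it takes the values $c_1(x)$ and $c_2(x)$, not both zero.
\end{itemize}
In either case $\dim U\le2$, a contradiction. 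Both arguments rest on smoothness of $S_0$ along $\ell$. The paper packages it through smoothness of $h$, while yours makes it explicit.
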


\begin{proof}
Use the local trivialization of the universal line in
Lemma~\ref{lem:pencil}. An effective degree-two divisor on that line is
specified by $[q]\in\PP(V)$, and it lies on the quartic surface exactly when
the restricted quartic equation is divisible by $q$. This gives the
incidence description
\[
 \{(s,[q])\in W\times\PP(V):s\in qV\}.
\]
The map $\OO_{\PP(V)}(-1)\otimes V\to W\otimes\OO_{\PP(V)}$ induced
by multiplication is injective on every fiber, since multiplication by a
nonzero quadratic is injective on quadratic forms. Its image is thus
a rank-three subbundle, and the incidence space is exactly the smooth
total space $A$. In particular, this description also works when the
divisor $q$ is nonreduced or when the residual quadratic shares a root
with $q$. By Lemma~\ref{lem:pencil}(iii), the
coordinates $s=J(L,t)$ identify this incidence description with a
neighborhood of the entire fiber $P$ in $\mathcal Y$; if necessary, take
the inverse image of a smaller neighborhood of $0$ in $W$.

The function $\tau$ is the $t$-coordinate of $J^{-1}$, which proves
\eqref{eq:local-time}. At $([q],0)$ the differential of $h$ vanishes
in tangent directions to the zero section and, in the normal direction
$q\otimes r$, equals $\lambda(qr)$. Since $h$ is a smooth morphism at
\emph{every} $[q]\in\PP(V)$, the functional $b(q,-)$ is nonzero for
every $q\ne0$. Hence the linear map $V\to V^*$, $q\mapsto b(q,-)$,
has trivial kernel and is an isomorphism, which proves that $b$ is nondegenerate. Finally, the normal bundle of the zero section in
$A$ is $\OO_{\PP(V)}(-1)\otimes V$, giving
\eqref{eq:normal-total}.
\end{proof}

\begin{remark}\label{rem:euler}
The differential of $t$ gives a surjective bundle map
\[
 \OO_{\PP(V)}(-1)\otimes V\longrightarrow\OO_{\PP(V)}.
\]
The pairing $b$ identifies $V$ with $V^*$, so its kernel is
$\Omega^1_{\PP(V)}$ by the Euler sequence. Thus the local model also
recovers \eqref{eq:normal-central} without using the symplectic form.
The function $\tau$ need not be linear; only its first derivative is used
in the submersion argument below.
\end{remark}

\subsection{The branch divisor and the canonical bundle}

Choose a positive integer $m$ and a general member
$Q\in|\OO_G(2m)|$ such that $[\ell]\notin Q$. The pullback linear
system on $Y_0$ is basepoint-free, so Bertini's theorem allows us to choose
$Q$ in such a way that
\[
 B_0=\Phi_0^{-1}(Q)
\]
is smooth. It is disjoint from $P$. Let
\[
 B=\Phi^{-1}(Q\times\Delta)\in|H^{\otimes2m}|.
\]
The nonsmooth locus of the relative divisor $B\to\Delta$ is a closed
subset of $B$. Since $\mathcal Y\to\Delta$ is proper, the image of this
closed locus is closed in $\Delta$, and it does not contain $0$ because
$B_0$ is smooth. After shrinking $\Delta$, therefore, $B\to\Delta$
is smooth. By choosing $Q$ general we may also assume that $Q$ itself
is a smooth divisor on $G$; the relevant requirement for the cover is
the smoothness of its pullback. Fix such a choice henceforth.

Set $L=H^{\otimes m}$, and let $s\in H^0(\mathcal Y,L^{\otimes2})$
define $B$. Form the double cover
\begin{equation}\label{eq:double-cover}
 f:\mathcal Z=\Spec_{\mathcal Y}
       \bigl(\OO_{\mathcal Y}\oplus L^{-1}\bigr)
       \longrightarrow\mathcal Y,
\end{equation}
where multiplication $L^{-2}\to\OO_{\mathcal Y}$ is given by $s$.
Locally the cover has equation $u^2=s$.

\begin{proposition}\label{prop:cover}
The morphism $\rho=h\circ f:\mathcal Z\to\Delta$ is a smooth projective
morphism with connected four-dimensional fibers. After shrinking the disc
if necessary,
\begin{equation}\label{eq:cover-canonical}
 K_{\mathcal Z/\Delta}\simeq f^*H^{\otimes m}.
\end{equation}
Consequently $K_{Z_t}$ is ample for every $t\ne0$, while $K_{Z_0}$ is big
and globally generated. Moreover,
\begin{equation}\label{eq:planes}
 f_0^{-1}(P)=P_+\sqcup P_-,\qquad P_\pm\simeq\PP^2,
\end{equation}
and a neighborhood of each $P_\pm$ in $\mathcal Z$, together with the
map to $\Delta$, is a copy of the local model in
Proposition~\ref{prop:local-model}.
\end{proposition}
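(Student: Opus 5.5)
The plan treats the four assertions in turn: smoothness of the cover, the canonical bundle formula, positivity on the fibers, and the local structure over $P$.

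\emph{Smoothness of $\rho$.} The double cover $u^2=s$ is smooth over $\Delta$ at points with $u\ne0$, where it is étale over the smooth morphism $h$. Along the ramification locus $u=0$, the relative Jacobian criterion reduces smoothness of $\rho$ to smoothness of $B\to\Delta$. This was arranged after shrinking $\Delta$. Projectivity is clear: $f$ is finite and $h$ is projective. Fibers are connected because $Z_t\to Y_t$ is a double cover branched along the nonempty divisor $B_t$ and $Y_t$ is connected. Alternatively, $h^0(\OO_{Z_t})=h^0(\OO_{Y_t})+h^0(L^{-1}|_{Y_t})=1$, since $L|_{Y_t}=H_t^{\otimes m}$ is big and nef, hence nontrivial with $h^0(L^{-1})=0$.

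\emph{Canonical bundle.} The standard double-cover formula gives $K_{\mathcal Z/\Delta}=f^*(K_{\mathcal Y/\Delta}\otimes L)$. Each $Y_t$ has trivial canonical bundle. Hence $K_{\mathcal Y/\Delta}$ restricts trivially to every fiber, and $h_*K_{\mathcal Y/\Delta}$ is a line bundle on the disc; by Grauert, $h^0=1$ is constant. So $K_{\mathcal Y/\Delta}\simeq h^*(\text{line bundle})$, which is trivial over the disc, and \eqref{eq:cover-canonical} follows.

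\emph{Positivity on fibers.} For $t\ne0$, $H_t$ is ample by Lemma~\ref{lem:secant}, and the pullback of an ample bundle under the finite map $f_t$ is ample, so $K_{Z_t}$ is ample. For $t=0$, $H_0$ is globally generated and big, and $f_0$ is finite and surjective. Hence $f_0^*H_0^{\otimes m}$ is globally generated with positive top self-intersection, so $K_{Z_0}$ is big.

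\emph{Local structure over $P$.} Since $Q\not\ni[\ell]$, the divisor $B=\Phi^{-1}(Q\times\Delta)$ avoids $\Phi^{-1}([\ell]\times\Delta)$. Choose a neighborhood $N$ of $P$ in $\mathcal Y$ disjoint from $B$ that deformation retracts onto $P$; such an $N$ exists by Proposition~\ref{prop:local-model}, since the local model is a vector bundle over $\PP^2$. On $N$ the cover is étale and is classified by the square root $u$ of the nowhere-vanishing section $s$ of $L^{2}$. Because $H|_P$ is trivial and $\pi_1(P)=0$, $L|_N$ is trivial after shrinking $N$. Then $u^2=s$ with $s$ a unit on the simply connected $N$, so $s$ has two holomorphic square roots and $f^{-1}(N)=N_+\sqcup N_-$, each mapped isomorphically to $N$ compatibly with the maps to $\Delta$. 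This gives \eqref{eq:planes} and the local-model statement.

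The main obstacle is the triviality of $L|_N$ and the existence of $\sqrt s$. I would handle it by using the product structure of the local model: $N$ retracts to $\PP^2$, so $H^1(N,\Z)=0$ and $H^2(N,\Z)\simeq H^2(\PP^2,\Z)$. Line bundles are detected by $c_1$ after shrinking to a Stein-over-base neighborhood, and $c_1(L|_P)=0$.
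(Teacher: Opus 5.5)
Your proposal is correct and follows the paper's proof essentially step by step. The shared steps are smoothness along $u=0$ from smoothness of $B\to\Delta$, the double-cover formula with $K_{\mathcal Y/\Delta}\simeq\OO_{\mathcal Y}$ via Grauert and the evaluation map, positivity pulled back from Lemma~\ref{lem:secant}, and splitting of the cover over a retracting neighborhood of $P$ disjoint from $B$. Your check $h^0(\OO_{Z_t})=1+h^0(Y_t,L^{-1}|_{Y_t})=1$ is a fine alternative to the paper's degree-one-sheet argument for connectedness.

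The step you single out as the main obstacle is not actually needed. Once the cover is \'etale over $N$ and $\pi_1(N)=0$, covering-space theory already splits $f^{-1}(N)$ into two sheets, and this is exactly how the paper concludes.

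If you do want global square roots of $s$, your justification needs fixing. Your appeal to a ``Stein-over-base'' neighborhood is not accurate: $N$ contains the compact surface $P$, so it is not Stein. Instead, note that $L|_N$ is trivial simply because $H$ is pulled back by $\Phi$. You can take $N$ inside $\Phi^{-1}(U\times\Delta)$ for a small ball $U\ni[\ell]$ on which $\OO_G(1)$ is trivial.
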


\begin{proof}
Since $B\to\Delta$ is smooth, the equation $u^2=s$ defines a smooth
relative double cover, including along the ramification divisor. The
cover is finite over the projective family $\mathcal Y$, hence projective
over $\Delta$. The divisor $B_t$ is nonempty on every fiber because
$\Phi_t$ is surjective. If a smooth fiber $Z_t$ were disconnected, its
two components would have degree one over the normal connected manifold
$Y_t$, and each would therefore map isomorphically to $Y_t$, contradicting
the nonemptiness of the branch divisor. Thus the fibers are connected.

Every $K_{Y_t}$ is trivial and $h^0(Y_t,K_{Y_t})=1$, so Grauert's theorem and
the fiberwise evaluation map give
$K_{\mathcal Y/\Delta}\simeq h^*h_*K_{\mathcal Y/\Delta}$.
The rank-one bundle on the disc is trivial, so
$K_{\mathcal Y/\Delta}\simeq\OO_{\mathcal Y}$. Adjunction for the
equation $u^2=s$ in $\Tot(L)$ gives the double-cover formula \cite[(1.2)]{LanteriStruppa}
\[
 K_{\mathcal Z/\Delta}
 \simeq f^*(K_{\mathcal Y/\Delta}\otimes L)
 \simeq f^*H^{\otimes m}.
\]
Lemma~\ref{lem:secant} proves the stated positivity properties.

Because $[\ell]\notin Q$, the branch divisor is disjoint from $P$.
Take a sufficiently small (topological) tubular neighborhood of $P$
in the smooth total space $\mathcal Y$ which is disjoint from $B$.
It deformation-retracts onto the simply connected space $P\simeq\PP^2$.
The restricted unramified degree-two covering is therefore the disjoint
union of two degree-one coverings, hence of two biholomorphic copies of
this neighborhood. This proves \eqref{eq:planes}, and shows in addition
that each copy carries the same map to $\Delta$ as in
Proposition~\ref{prop:local-model}.
\end{proof}

\subsection{The homology relation before the flop}

\begin{lemma}\label{lem:homology}
Let $C_+\subset P_+$ and $C_-\subset P_-$ be projective lines. Then
\begin{equation}\label{eq:equal-lines}
 [C_+]=[C_-]\ne0\quad\text{in }H_2(Z_0,\R).
\end{equation}
\end{lemma}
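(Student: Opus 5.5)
Nonvanishing is the easy half, so I handle it first. On $Z_0$ the line bundle $f_0^*H_0^{\otimes m}$ restricts trivially to $P_\pm$, but I need a class that is nonzero on the lines, and $K_{Z_0}$ itself is no help since $K_{Z_0}\cdot C_\pm=0$. Instead I use the projective structure of the double cover. The space $Z_0$ is a double cover of the projective manifold $Y_0$, hence projective, so it carries an ample line bundle $A$. Then $A\cdot C_\pm>0$, which gives $[C_\pm]\ne0$ in $H_2(Z_0,\R)$. Concretely, I would take $A=f_0^*A_{Y_0}$ with $A_{Y_0}$ ample on $Y_0=S_0^{[2]}$; the restriction of $A_{Y_0}$ to $P\simeq\PP^2$ is ample, so $f_0^*A_{Y_0}\cdot C_\pm=A_{Y_0}\cdot f_{0*}C_\pm>0$ by the projection formula \eqref{eq:projection-formula}.

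For the equality $[C_+]=[C_-]$, I plan to use the cyclic-cover Lefschetz theorem \cite[Proposition~1.11]{LanteriStruppa}. The branch divisor $B_0=\Phi_0^{-1}(Q)$ lies in $|H_0^{\otimes 2m}|$, with $H_0$ big and globally generated. The theorem should therefore give that
\[
 f_{0*}\colon H_2(Z_0,\Q)\longrightarrow H_2(Y_0,\Q)
\]
is an isomorphism, or at least injective; dually, $f_0^*$ is an isomorphism on $H^2$. Since $f_0$ maps both $C_+$ and $C_-$ isomorphically onto the same line $C\subset P$, we get $f_{0*}[C_+]=[C]=f_{0*}[C_-]$, and injectivity then forces $[C_+]=[C_-]$.

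Before using the theorem I would check its hypotheses. The Lefschetz statement for double covers branched along an ample divisor is classical. Here $H_0$ is only big and semiample, so I would apply the version in \cite{LanteriStruppa} that requires the branch line bundle to be ample; whether it applies directly is unclear. If it does not, I would reduce to the Lefschetz-type statement for the finite part $\Phi_0$: the composite $Z_0\to Y_0\to G$ is such that $Z_0$ is the pullback of the double cover of $G$ branched along $Q$. Alternatively, I would argue with the deck involution $\iota$ of $f_0$, which swaps $P_+$ and $P_-$, so $\iota_*[C_+]=[C_-]$. It then suffices to show that $\iota$ acts trivially on $H^2(Z_0,\Q)$, that is, that the anti-invariant part $H^2(Z_0)^-=H^2(Y_0,L^{-1}\text{-twisted local system})$ vanishes. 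I would try to show this vanishing via a Lefschetz or vanishing argument relative to $Y_0\setminus B_0$, using that $B_0$ is the pullback of an ample divisor under a generically finite morphism.

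The main obstacle is exactly this step: the vanishing of the $\iota$-anti-invariant part of $H^2(Z_0)$, equivalently the applicability of the cyclic-cover Lefschetz theorem when $L$ is only big and globally generated rather than ample. To settle it I would first try the topological Lefschetz theorem for the affine-like complement $Y_0\setminus B_0$. The hope is that $\Phi_0$ is finite away from $P$ and that $P$ has real codimension four, so the complement behaves like a finite cover of the affine variety $G\setminus Q$ and has homotopy dimension close to $4$ in the relevant range. This would yield $H^k(Y_0,B_0)$ vanishing for small $k$, and hence triviality of the anti-invariant part in degree two.
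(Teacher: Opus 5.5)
\textbf{The gap.} Your nonvanishing argument is fine and matches the paper. The gap is in the equality $[C_+]=[C_-]$. Everything rests on injectivity of $f_{0*}$ on $H_2(Z_0,\R)$, equivalently on $H^2(Z_0)^-=0$, and you leave exactly this step unresolved.

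The cyclic-cover Lefschetz theorem does not apply on $Z_0$ as it stands. The divisor $B_0$ is not ample: $H_0|_P$ is trivial, so $B_0\cdot C=0$ for every line $C\subset P$. Your fallbacks (the deck involution, a Lefschetz statement for $Y_0\setminus B_0$) are stated as hopes, not arguments.

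\textbf{The missing idea: move off the central fiber.}
\begin{itemize}
\item For $t\ne0$, $H_t$ is ample, so $B_t$ is a smooth ample branch divisor. Then \cite[Proposition~1.11]{LanteriStruppa} makes $f_t^*\colon H^2(Y_t,\R)\to H^2(Z_t,\R)$ an isomorphism.
\item Both $h$ and $\rho=h\circ f$ are proper submersions. Hence $f$ induces a morphism of locally constant sheaves $R^2h_*\R\to R^2\rho_*\R$ on the whole connected disc.
\item Its kernel and cokernel are locally constant and vanish for $t\ne0$, so they vanish at $t=0$. Thus $f_0^*$ is an isomorphism on $H^2$, and dually $f_{0*}$ is an isomorphism on $H_2(\cdot,\R)$.
\item Since $f_{0*}[C_+]=f_{0*}[C_-]$ is the line class of $P$, this gives $[C_+]=[C_-]$.
\end{itemize}

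\textbf{Your direct route can be completed, but it needs more.} The real-codimension count you gesture at is not enough. Write $U=Y_0\setminus B_0$, and let $\mathbb L$ be the rank-one anti-invariant local system on $U$. Then $H^2(Z_0,\Q)^-\simeq H^2_c(U,\mathbb L)$, because the anti-invariant part of $f_{0*}\Q_{Z_0}$ is $j_!\mathbb L$.
\begin{itemize}
\item The map $\Phi_0|_U\colon U\to G\setminus Q$ is proper onto an affine variety.
\item It is semismall: its only positive-dimensional fiber is $P$, of dimension $2$ over a point of a fourfold, and $2\cdot 2\le 4$.
\item Hence $R(\Phi_0|_U)_*\mathbb L[4]$ is perverse. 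Artin vanishing then gives $H^j_c(U,\mathbb L)=0$ for $j<4$, and in particular $H^2(Z_0)^-=0$.
\end{itemize}
This argument works entirely on the central fiber but uses perverse-sheaf input. The paper's route needs only the ample case of the Lefschetz theorem and the topological local triviality of the two smooth families.
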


\begin{proof}
For $t\ne0$ the branch divisor $B_t\in|H_t^{\otimes2m}|$ is ample, and the
Lefschetz theorem for cyclic covers
\cite[Proposition 1.11]{LanteriStruppa} gives an isomorphism
\[
 (f_t)_*:H_2(Z_t,\R)\xrightarrow{\sim}H_2(Y_t,\R);
\]
the theorem in fact gives integral homology isomorphisms in degrees less
than the complex dimension. Equivalently, $f_t^*$ is an isomorphism on
$H^2$ with real coefficients.

Because $f:\mathcal Z\to\mathcal Y$ is a morphism over $\Delta$
between two proper holomorphic submersions, its restrictions induce a
morphism $R^2h_*\R\to R^2\rho_*\R$ of locally constant sheaves on
\emph{the whole disc}. In locally constant trivializations this morphism
is represented by one fixed linear map, so the isomorphism established on
$\Delta^*$ implies the same statement at $0$:
\[
 f_0^*:H^2(Y_0,\R)\xrightarrow{\sim}H^2(Z_0,\R),
\]
and, by duality over $\R$, $(f_0)_*$ is also an isomorphism on $H_2$.
Both $C_+$ and $C_-$ map isomorphically to lines in $P$, whose homology
classes coincide, which proves their equality in $Z_0$. The common class
is nonzero because $Z_0$ is projective and an ample class has positive
degree on either line.
\end{proof}

\subsection{An elementary transformation over the disc}\label{sec:flop}

Blow up only the plane $P_+$ in the total space:
\[
 p:\widetilde{\mathcal Z}=\Bl_{P_+}\mathcal Z\longrightarrow\mathcal Z.
\]
Its exceptional divisor will be denoted by $D$. By
Proposition~\ref{prop:cover},
\begin{equation}\label{eq:exceptional-total}
 D\simeq\PP(V)\times\PP(V),\qquad
 \OO_D(D)\simeq\OO_{\PP(V)\times\PP(V)}(-1,-1).
\end{equation}
The first projection is the restriction of $p$.

\begin{proposition}\label{prop:flop-family}
There is a proper holomorphic map
$q:\widetilde{\mathcal Z}\to\mathcal X$ to a smooth complex fivefold
which contracts $D$ along its second projection and is an isomorphism
away from $D$. The induced map $\pi:\mathcal X\to\Delta$ is a proper
holomorphic submersion, and its restriction over $\Delta^*$ is isomorphic to
$\mathcal Z|_{\Delta^*}\to\Delta^*$.

Writing $P_+^\vee=q(D)\simeq\PP^2$, the central fibers fit into a diagram
\begin{equation}\label{eq:central-diagram}
 Z_0\xleftarrow{\ p_0\ }\widetilde Z_0
       \xrightarrow{\ q_0\ }X_0,
\end{equation}
where $p_0$ and $q_0$ are the blow-ups of $P_+$ and $P_+^\vee$,
respectively. Their common exceptional divisor is
\begin{equation}\label{eq:incidence}
 E=\{([q],[r])\in\PP(V)\times\PP(V):b(q,r)=0\},
 \qquad \OO_E(E)\simeq\OO_E(-1,-1).
\end{equation}
The birational transformation is an isomorphism in a neighborhood of
$P_-\subset Z_0$.
\end{proposition}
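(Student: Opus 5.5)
Work entirely in the local model of Proposition~\ref{prop:cover}: near $P_+$, the pair $(\mathcal Z,\rho)$ is a neighborhood of the zero section in $A=\Tot(\OO_{\PP(V)}(-1)\otimes V)$ with $t=\tau(qr)$. Away from this neighborhood nothing changes, so $\mathcal X$ is obtained by gluing a local construction to $\mathcal Z\setminus P_+$.

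\textbf{The local flop.} The blow-up of the zero section of $A$ has an explicit description:
\[
 \widetilde A=\{([q],[r],\,q\otimes r)\}
 =\Tot\bigl(\OO_{\PP(V)\times\PP(V)}(-1,-1)\bigr),
\]
with the zero section $D=\PP(V)\times\PP(V)$. This gives \eqref{eq:exceptional-total}. The map $p$ forgets $[r]$. By symmetry, forgetting $[q]$ gives
\[
 \widetilde A\to A^\vee=\Tot\bigl(\OO_{\PP(V)}(-1)\otimes V\bigr)
\]
over the second factor. This is again the blow-up of a zero section $P_+^\vee$. Both maps are isomorphisms off $D$, since both are identified there with the cone $\{q\otimes r\}\setminus 0\subset V\otimes V$ of rank-one tensors.

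Define $\mathcal X$ by gluing $A^\vee$ (a small neighborhood) to $\mathcal Z\setminus P_+$ along this isomorphism of complements. Properness of $q$ is then clear, and $\mathcal X$ is a smooth fivefold. Hausdorffness follows because both spaces are obtained from $\widetilde{\mathcal Z}$ by a proper contraction; one could also invoke Nakai--Moishezon/Fujiki contraction criteria for $\OO_D(D)=\OO(-1,-1)$ restricted to the fibers of the second projection.

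\textbf{Submersion.} The function $t=\tau(qr)$ is defined on $\widetilde A$ and is constant on the fibers of both projections, since it depends only on the tensor $q\otimes r$. It therefore descends to a holomorphic function $\pi$ on $\mathcal X$. Off $P_+^\vee$ it agrees with $\rho$, so $\pi$ is a submersion there, and over $\Delta^*$ the family is unchanged, since $P_+\subset Z_0$ and $P_+^\vee$ lies in the central fiber. Along $P_+^\vee$, at a point $[r]$, the differential of $\pi$ in the fiber direction $q$ equals $\lambda(qr)=b(q,r)$. By nondegeneracy of $b$ from Proposition~\ref{prop:local-model}, $b(-,r)\neq0$ for every $r\ne0$. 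Hence $\pi$ is smooth along the whole of $P_+^\vee$. I expect this to be the key point: the nondegeneracy proof in Proposition~\ref{prop:local-model} used smoothness in the first variable, and it has to be transposed to the second variable, which is exactly what nondegeneracy of the symmetric-in-roles pairing provides.

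\textbf{Central fibers.} On $\widetilde A$, the central fiber $\{\tau(qr)=0\}$ contains $D$ with multiplicity one. Near $D$ it is $\{\lambda(qr)+O(|qr|^2)=0\}$, and its strict-transform components meet $D$ in the divisor $E=\{b(q,r)=0\}$. Concretely:
\begin{itemize}
\item $\widetilde Z_0$, the strict transform of $Z_0$, is $\Bl_{P_+}Z_0$, with exceptional divisor $\PP(N_{P_+/Z_0})=\PP(\Omega^1)$. Via the Euler sequence as in Remark~\ref{rem:euler}, this equals the incidence $E$.
\item Symmetrically, $\widetilde Z_0\to X_0$ is the blow-up of $P_+^\vee$ with the same exceptional divisor.
\item The normal bundle is $\OO_E(E)=\OO_{\widetilde Z_0}(D)|_E=\OO(-1,-1)|_E$, using that $\widetilde Z_0+D=p^*Z_0$ is trivial near $D$ together with \eqref{eq:exceptional-total}.
\end{itemize}
This yields the diagram \eqref{eq:central-diagram} and \eqref{eq:incidence}. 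Finally, $P_-$ is disjoint from $P_+$ by \eqref{eq:planes}, so the whole construction is an isomorphism near $P_-$.
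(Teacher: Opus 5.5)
Your proposal is correct and follows essentially the paper's route. The shared steps are: the rank-one tensor cone with its two small resolutions $A$, $A'$ and common blow-up $\Tot(\OO(-1,-1))$; descent of $\tau\circ\mu$, which is a submersion along $P_+^\vee$ by nondegeneracy of $b$; and the identification of $\widetilde Z_0$, $E=\{b=0\}$ and $\OO_E(E)$ by restriction from $D$.

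The one difference is that you build $\mathcal X$ by explicit gluing, whereas the paper invokes the Fujiki--Nakano inverse blow-up criterion; the relevant criterion is Fujiki--Nakano, not Nakai--Moishezon. For Hausdorffness of the glued space, the clean reason is that collapsing the compact fibers of $D\to\PP(V)$ is a closed map with compact fibers. You should also state the one-line properness of $\pi$, namely $\pi^{-1}(K)=q\bigl((\rho\circ p)^{-1}(K)\bigr)$.
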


\begin{proof}
The normal line bundle in \eqref{eq:exceptional-total} restricts to
$\OO_{\PP^2}(-1)$ on every fiber of the second projection. Since the
second projection $D\to\PP(V)$ is a smooth $\PP^2$-bundle and its normal
line bundle has the required tautological restriction, the
Fujiki--Nakano inverse blow-up criterion \cite[Section~2]{FujikiNakano} produces
a proper analytic contraction $q$ to a smooth complex manifold,
contracting precisely these fibers. Away from $D$ the map is an
isomorphism. The contraction is global, but analytic: no projectivity
of $\mathcal X$ is assumed or required. The same blow-up and
blow-down procedure appears in \cite[Section~3, proof of Theorem~3.4]{Huybrechts}.

For completeness, we check the map to the disc in the explicit
neighborhood of $P_+$. Introduce the rank-one tensor cone
\[
 \Sigma=\{T\in V\otimes V:\operatorname{rank}T\le1\}.
\]
The two small resolutions of this cone are
\begin{align*}
 A&=\Tot\bigl(\OO_{\PP(V)}(-1)\otimes V\bigr),
       &([q],q\otimes r)&\longmapsto q\otimes r,\\
 A'&=\Tot\bigl(V\otimes\OO_{\PP(V)}(-1)\bigr),
       &([r],q\otimes r)&\longmapsto q\otimes r.
\end{align*}
The blow-up of the zero section in either resolution is
$\Tot(\OO_{\PP(V)\times\PP(V)}(-1,-1))$. Thus the second contraction
is given locally by replacing $A$ with $A'$.

On the cone the function defining the base parameter is
\[
 T\longmapsto\tau(\mu(T)).
\]
It consequently defines the same holomorphic base map on both
resolutions. On the zero section of $A'$, its differential in the fiber
direction $q$ is $b(q,r)$. Nondegeneracy of $b$ shows that this is nonzero
for every $[r]\in\PP(V)$, so the descended map $\pi$ is a submersion
along $P_+^\vee$. Away from that plane it is identified with the original
submersion $\rho$.

The map $\pi$ is proper. Indeed, for a compact subset $K\subset\Delta$,
the set $(\rho\circ p)^{-1}(K)$ is compact, and its image under the
surjective map $q$ is exactly $\pi^{-1}(K)$. The construction is an
isomorphism over $\Delta^*$, because its center lies in the central fiber.

It remains to identify that fiber and its exceptional divisor. In the
common blow-up, write the tensor as $u(q\otimes r)$, where $u$ is a local
coordinate in the tautological line. Taylor expansion gives
\[
 \tau(uqr)=u\,b(q,r)+O(u^2).
\]
As $h$ (and hence $\rho$) is a submersion along $P_+$, the pullback
of its central-fiber equation has vanishing order exactly one on the
blow-up center. Thus the total inverse image of $0$ is the reduced
union of $D$ and the strict transform $\widetilde Z_0$, the latter
having, in this chart, the equation
\[
 b(q,r)+O(u)=0.
\]
It meets $D$ transversely in the smooth incidence divisor
$E=\{b(q,r)=0\}$. Nondegeneracy identifies the second projective plane
with the dual of the first, so each projection of $E$ is a
$\PP^1$-bundle. The strict transform $\widetilde Z_0$ is the blow-up
of $Z_0$ along $P_+$, while its contraction onto the now smooth $X_0$
is the blow-up of $X_0$ along $P_+^\vee$; the inverse-blow-up assertion
may also be seen directly from the local rank-one tensor model.
Restriction of \eqref{eq:exceptional-total} gives the claimed normal
bundle in \eqref{eq:incidence}. Since $P_-$ is disjoint from the center,
it is untouched by the construction.
\end{proof}

\subsection{The obstruction to a K\"ahler form}\label{sec:obstruction}

We record the change of degrees under this elementary transformation.

\begin{lemma}\label{lem:sign}
The common complement of the planes in \eqref{eq:central-diagram}
induces an isomorphism
\begin{equation}\label{eq:H2-identification}
 H^2(Z_0,\R)\simeq H^2(X_0,\R).
\end{equation}
Let $\alpha$ and $\alpha'$ be corresponding classes. For projective lines
$C_+\subset P_+$ and $C_+^\vee\subset P_+^\vee$ one has
\begin{equation}\label{eq:sign-change}
 \int_{C_+^\vee}\alpha'=-\int_{C_+}\alpha.
\end{equation}
Moreover, for a line $C_-\subset P_-$,
$\int_{C_-}\alpha'=\int_{C_-}\alpha$.
\end{lemma}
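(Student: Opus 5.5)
The isomorphism \eqref{eq:H2-identification} is obtained by passing through the complements of the planes, and the sign relation is then checked on the common blow-up. Put $U=Z_0\setminus P_+=X_0\setminus P_+^\vee$; these agree because \eqref{eq:central-diagram} is an isomorphism off the exceptional divisor $E$. The planes have complex codimension two in the fourfolds $Z_0$ and $X_0$, so \eqref{eq:codim-two-support} gives
\[
 H^2(Z_0,\R)\simeq H^2(U,\R)\simeq H^2(X_0,\R).
\]
To compare integrals we lift everything to $\widetilde Z_0$. By Lemma~\ref{lem:smooth-modification} (or simply the blow-up formula for the smooth blow-ups $p_0,q_0$), we have
\[
 H^2(\widetilde Z_0)=p_0^*H^2(Z_0)\oplus\R[E]=q_0^*H^2(X_0)\oplus\R[E].
\]
Restricting to $U$ kills $[E]$. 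Hence for corresponding $\alpha,\alpha'$ there is a real number $c$ with $q_0^*\alpha'=p_0^*\alpha+c[E]$.

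Next we determine $c$ and compute the integrals using curves inside $E\simeq\{b(q,r)=0\}\subset\PP(V)\times\PP(V)$, which is a $(1,1)$-divisor. Take a fiber $f_1$ of the first projection $E\to P_+$; it is a line in the second factor. Take a fiber $f_2$ of the second projection $E\to P_+^\vee$. From \eqref{eq:incidence}, $E\cdot f_i=\deg\OO_E(-1,-1)|_{f_i}=-1$. The curve $f_1$ is contracted by $p_0$, so $p_0^*\alpha\cdot f_1=0$. Therefore
\[
 q_0^*\alpha'\cdot f_1=-c.
\]
Since $q_0$ maps $f_1$ isomorphically onto a line $C_+^\vee$ in $P_+^\vee$, the projection formula \eqref{eq:projection-formula} gives $\int_{C_+^\vee}\alpha'=-c$. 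Symmetrically, $f_2$ is contracted by $q_0$, so $0=p_0^*\alpha\cdot f_2+c(E\cdot f_2)$, and $p_0$ maps $f_2$ isomorphically onto a line $C_+$. This gives $\int_{C_+}\alpha=c$. Combining the two yields \eqref{eq:sign-change}.

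The only care needed is identifying $f_1,f_2$ as lines and checking the degrees. The fiber of $E\to P_+$ over $[q]$ is the line $\{r: b(q,r)=0\}$ in $\PP(V)$, and $\OO(1,1)$ has degree one on it. The same holds on the other side.

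For $C_-\subset P_-$ we use that $P_-$ is disjoint from $P_+$ and lies in $U$, and that the transformation is an isomorphism near $P_-$ (Proposition~\ref{prop:flop-family}). The classes $\alpha$ and $\alpha'$ restrict to the same class on $U$, so their integrals over $C_-\subset U$ agree. Equivalently, the strict transform of $C_-$ avoids $E$, so $c[E]\cdot C_-=0$.

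I expect the main obstacle to be bookkeeping rather than substance. One has to ensure that the correspondence of classes defined through $U$ is the same as the one given by $p_0^*$ and $q_0^*$ modulo $[E]$, and that the identification $E\simeq$ incidence divisor has the stated normal bundle. Both are supplied by Proposition~\ref{prop:flop-family}.
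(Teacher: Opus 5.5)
Your proof is correct and follows essentially the same route as the paper. Both arguments identify $H^2$ across the codimension-two planes, write $q_0^*\alpha'-p_0^*\alpha$ as a multiple of $[E]$, and evaluate on the two rulings of $E$ using $\OO_E(E)\simeq\OO_E(-1,-1)$. The only cosmetic difference is that you obtain the multiple of $[E]$ from the blow-up decomposition of $H^2(\widetilde Z_0)$, whereas the paper reads it off directly from the exact sequence with supports along $E$.
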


\begin{proof}
The planes have real codimension four. The Thom isomorphism and the
long exact sequence with supports show that removal of either plane
induces an isomorphism on $H^2$: the support groups in degrees two and
three vanish. This proves \eqref{eq:H2-identification}.

The class $p_0^*\alpha-q_0^*\alpha'$ vanishes on
$\widetilde Z_0\setminus E$, so the exact sequence with supports along the
smooth connected divisor $E$ gives
\begin{equation}\label{eq:exceptional-class}
 p_0^*\alpha-q_0^*\alpha'=a[E]
\end{equation}
for some $a\in\R$, where $[E]=c_1(\OO_{\widetilde Z_0}(E))$.
Put $d=\int_{C_+}\alpha$ and $d'=\int_{C_+^\vee}\alpha'$.
A fiber $F_q\simeq\PP^1$ of $q_0|_E$ maps under $p_0$ isomorphically
to a line in $P_+$, while a fiber $F_p$ of $p_0|_E$ maps under $q_0$
isomorphically to a line in $P_+^\vee$. By \eqref{eq:incidence},
\[
 E\cdot F_p=E\cdot F_q=-1.
\]
Evaluating \eqref{eq:exceptional-class} on these two fibers yields
$d=-a$ and $-d'=-a$, hence $d'=-d$. The assertion for $C_-$ follows
because this curve lies in the common complement.
\end{proof}

\begin{proposition}\label{prop:nonkahler}
The central fiber $X_0$ is Moishezon and not K\"ahler. More precisely,
\begin{equation}\label{eq:opposite-lines}
 [C_+^\vee]+[C_-]=0\quad\text{in }H_2(X_0,\R),
\end{equation}
and neither summand is zero.
\end{proposition}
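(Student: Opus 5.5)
The plan has three parts: derive the homology relation from Lemmas~\ref{lem:homology} and \ref{lem:sign}, use it to rule out a K\"ahler form, and check the Moishezon property separately.

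\textbf{The relation \eqref{eq:opposite-lines}.} Use the identification \eqref{eq:H2-identification}, $H^2(Z_0,\R)\simeq H^2(X_0,\R)$, given by the common complement of the planes. Take any $\alpha'\in H^2(X_0,\R)$ and let $\alpha\in H^2(Z_0,\R)$ be the corresponding class. Lemma~\ref{lem:sign} gives
\[
 \int_{C_+^\vee}\alpha'+\int_{C_-}\alpha'
 =-\int_{C_+}\alpha+\int_{C_-}\alpha .
\]
Lemma~\ref{lem:homology} says $[C_+]=[C_-]$ in $H_2(Z_0,\R)$, so the right-hand side is zero. Since $H_2(X_0,\R)$ is dual to $H^2(X_0,\R)$ over $\R$, this yields \eqref{eq:opposite-lines}.

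\textbf{Nonvanishing of the summands.} Lemma~\ref{lem:homology} also provides a class $\alpha$ with $\int_{C_+}\alpha\ne0$, for instance an ample class on the projective manifold $Z_0$. Its counterpart $\alpha'$ then satisfies $\int_{C_+^\vee}\alpha'=-\int_{C_+}\alpha\ne0$ and $\int_{C_-}\alpha'=\int_{C_-}\alpha\ne0$. Hence neither $[C_+^\vee]$ nor $[C_-]$ is zero.

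\textbf{Non-K\"ahler.} Suppose $\omega$ is a K\"ahler form on $X_0$. Then $\int_{C_+^\vee}\omega>0$ and $\int_{C_-}\omega>0$, because both curves are compact complex curves. This contradicts \eqref{eq:opposite-lines}. Note that the argument needs $C_-\subset X_0$ to be a genuine complex curve; this holds because the transformation is an isomorphism near $P_-$, by Proposition~\ref{prop:flop-family}.

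\textbf{Moishezon.} By the diagram \eqref{eq:central-diagram}, $X_0$ is bimeromorphic to $Z_0$ through $\widetilde Z_0$. The manifold $Z_0$ is projective, being a double cover of the projective manifold $Y_0$. Moishezon-ness is a bimeromorphic invariant (see Section~\ref{subsec:positivity}), so $X_0$ is Moishezon.

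\textbf{Main obstacle.} The only delicate point is the bookkeeping of the $H^2$ identification: the correspondence $\alpha\leftrightarrow\alpha'$ used above must be the one of Lemma~\ref{lem:sign}, so that the degree on $C_-$ is literally preserved and the degree on the line of $P_+$ is reversed. Both facts are already recorded in Lemma~\ref{lem:sign}, so I expect no further difficulty beyond stating them consistently.
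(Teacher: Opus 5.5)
Your proposal is correct and follows essentially the same argument as the paper. You transport an arbitrary class through the identification of Lemma~\ref{lem:sign}, cancel using Lemma~\ref{lem:homology}, obtain nonvanishing from an ample class on $Z_0$, and rule out a K\"ahler form by positivity on compact complex curves, with the Moishezon property coming from the bimeromorphic map to $Z_0$.
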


\begin{proof}
The manifold $X_0$ is bimeromorphic to the projective manifold $Z_0$, so
it is Moishezon. For an arbitrary $\alpha'\in H^2(X_0,\R)$, let
$\alpha\in H^2(Z_0,\R)$ correspond under
\eqref{eq:H2-identification}. Lemmas~\ref{lem:homology} and
\ref{lem:sign} give
\[
 \int_{C_+^\vee}\alpha'
 =-\int_{C_+}\alpha
 =-\int_{C_-}\alpha
 =-\int_{C_-}\alpha'.
\]
The perfect pairing between real $H^2$ and $H_2$ proves
\eqref{eq:opposite-lines}. Taking $\alpha$ to be an ample class on $Z_0$,
its evaluation on $C_-$ is positive, which also proves that both classes
in \eqref{eq:opposite-lines} are nonzero.

If $\omega$ were a K\"ahler form on $X_0$, its integrals over the two
smooth complex curves would both be positive, whereas
\eqref{eq:opposite-lines} would imply
\[
 0=\int_{C_+^\vee}\omega+\int_{C_-}\omega>0.
\]
This contradiction proves that $X_0$ is not K\"ahler.
\end{proof}

\begin{proof}[Proof of Theorem~\ref{thm:KE-main}]
Propositions~\ref{prop:cover}, \ref{prop:flop-family}, and
\ref{prop:nonkahler} prove the assertion for $n=4$. For $n>4$, let $T$
be a product of $n-4$ smooth projective curves of genus at least two,
and form the product family
\[
 \mathcal X\times T\longrightarrow\Delta.
\]
It is a proper holomorphic submersion with connected compact fibers of
dimension $n$. For $t\ne0$, the canonical bundle of $X_t\times T$ is
the exterior tensor product of two ample line bundles, hence ample.
The central fiber is Moishezon because $X_0$ is bimeromorphic to a
projective manifold and $T$ is projective. If $X_0\times T$ were
K\"ahler, restricting a K\"ahler form to $X_0\times\{u\}$ would make
$X_0$ K\"ahler, a contradiction.
\end{proof}

\end{document}